\newcommand{\ds}{\displaystyle}
\newcommand{\N}{\mathbb{N}}
\newcommand{\Z}{\mathbb{Z}}
\newcommand{\R}{\mathbb{R}}
\newcommand{\ZZ}{\mathcal{Z}}
\newcommand{\A}{\mathcal{A}}
\newcommand{\W}{\mathscr{W}}
\newcommand{\V}{\mathscr{V}}
\newcommand{\dx}{{\rm d}x }
\newcommand{\dt}{{\rm d}t }
\newcommand{\dxi}{{\rm d}\xi }
\newcommand{\beq}{\begin{eqnarray}}
\newcommand{\eeq}{\end{eqnarray}}
\newcommand{\beqs}{\begin{eqnarray*}}
\newcommand{\eeqs}{\end{eqnarray*}}
\newtheorem{theorem}{Theorem}[section]
\newtheorem{proposition}[theorem]{Proposition}
\newtheorem{lemma}[theorem]{Lemma}
\theoremstyle{definition}
\newtheorem{definition}[theorem]{Definition}
\newtheorem{example}[theorem]{Example}
\newtheorem{problem}[theorem]{Problem}
\theoremstyle{remark}
\newtheorem{remark}[theorem]{Remark}
\newtheorem{notation}[theorem]{Notation}
\numberwithin{equation}{section}
\tikzset{join/.code=\tikzset{after node path={%
\ifx\tikzchainprevious\pgfutil@empty\else(\tikzchainprevious)%
edge[every join]#1(\tikzchaincurrent)\fi}}}
\tikzset{>=stealth',every on chain/.append style={join},
         every join/.style={->}}
\tikzstyle{labeled}=[execute at begin node=$\scriptstyle,
\newcommand\condwM{\operatorname{wM}}
\newcommand\condM{\operatorname{M}}
\newcommand\condN{\operatorname{N}}
\newcommand\condQ{(Q)}
\newcommand\condwQ{(wQ)}
\newcommand\condDN{\operatorname{DN}}
\newcommand\condooOmega{\overline{\overline{\Omega}}}
\newcommand\condSquare{\operatorname{S}}
\newcommand\norm[1]{\left\lVert#1\right\rVert}
\DeclareMathOperator{\id}{id}
\begin{document}
\title[Weighted $(PLB)$-spaces of ultradifferentiable functions]{Weighted $(PLB)$-spaces of ultradifferentiable functions and multiplier spaces}

\author[A. Debrouwere]{Andreas Debrouwere}
\thanks{A. Debrouwere was supported by  FWO-Vlaanderen through the postdoctoral grant 12T0519N}
\address{Department of Mathematics and Data Science \\ Vrije Universiteit Brussel, Belgium\\ Pleinlaan 2 \\ 1050 Brussels \\ Belgium}
\email{andreas\_debrouwere@hotmail.com}

\author[L. Neyt]{Lenny Neyt}
\thanks{L. Neyt gratefully acknowledges support by FWO-Vlaanderen through the postdoctoral grant 12ZG921N}
\address{Department of Mathematics: Analysis, Logic and Discrete Mathematics\\ Ghent University\\ Krijgslaan 281\\ 9000 Gent\\ Belgium}
\email{lenny.neyt@UGent.be}

\subjclass[2010]{46E10, 46A08, 46A13, 46A63.} 
\keywords{Ultrabornological (PLB)-spaces; Gelfand-Shilov spaces; multiplier spaces; short-time Fourier transform; Gabor frames.}

\begin{abstract}
We study weighted $(PLB)$-spaces of ultradifferentiable functions  defined via a weight function (in the sense of Braun, Meise and Taylor) and a weight system. We characterize when such spaces are ultrabornological in terms of the defining weight system. This generalizes Grothendieck's classical result that the space $\mathcal{O}_M$ of slowly increasing smooth functions is ultrabornological to the context of ultradifferentiable functions. Furthermore, we determine the multiplier spaces of Gelfand-Shilov spaces and, by using the above result, characterize when such spaces are ultrabornological. In particular, we show that the multiplier space of the space of Fourier ultrahyperfunctions is ultrabornological, whereas the one of the space of Fourier hyperfunctions is not.

\end{abstract}

\maketitle

\section{Introduction}

Countable projective limits of countable inductive limits of Banach spaces, called $(PLB)$-spaces, arise naturally in  functional analysis. Classical examples are the space of distributions, the space of real analytic functions and the space $\mathcal{O}_{M}$ of slowly increasing smooth functions. In order to be able to apply functional analytic tools such as  De Wilde’s open mapping and closed graph theorems or the theory of the derived projective limit functor \cite{Wengenroth}, it is important to determine when such spaces are ultrabornological. Note that this is a non-trivial matter as the projective limit of a spectrum  of ultrabornological spaces is not necessarily again ultrabornological. The problem of characterizing when $(PLB)$-spaces are  ultrabornological has been extensively studied, both from an abstract point of view as for concrete function and (ultra)distribution spaces; see the survey article \cite{D-ClasPLSSpDist} and the references therein.

In the last part of his doctoral thesis \cite[Chap.\ 2, Th\'eor\`eme 16, p.\ 131]{G-ProdTensTopEspNucl} Grothendieck proved that the space  $\mathcal{O}_M$ is ultrabornological. He showed that $\mathcal{O}_M$ is isomorphic to a complemented subspace of $s \, \widehat{\otimes} \, s^{\prime}$ and verified directly that the latter space is ultrabornological. Later on Valdivia \cite{V-RepOM} showed that in fact $\mathcal{O}_{M}$ is isomorphic to  $s \, \widehat{\otimes} \, s^{\prime}$. A different proof of the fact that  $\mathcal{O}_{M}$ is ultrabornological was given by Larcher and  Wengenroth using homological methods \cite{L-W}.

In this article we study weighted $(PLB)$-spaces of ultradifferentiable functions.  Our spaces are defined as follows. 
Let $\omega : [0,\infty) \rightarrow [0,\infty)$ be a weight function (in the sense of Braun, Meise and Taylor \cite{B-M-T-UltraDiffFuncFAnal}) and set $\phi(x) = \omega(e^x)$. Denote by $\phi^{*}(y) = \sup_{x \geq 0} \{xy - \phi(x)\}$ the Young conjugate of $\phi$. Let $\V = \{v_\lambda ~  | ~  \lambda \in (0,\infty) \}$  be a family of continuous functions $v_\lambda: \R^d \rightarrow (0,\infty)$ such that  $1 \leq v_{\lambda} \leq v_{\mu}$ for all  $\mu \leq \lambda$. We call $\V$  a weight system. We then consider the weighted $(PLB)$-spaces of ultradifferentiable functions of Beurling and Roumieu type 
\[ \ZZ^{(\omega)}_{(\V)} := \varprojlim_{h \rightarrow 0^{+}} \varinjlim_{\lambda \rightarrow 0^+} \mathcal{Z}^{\omega,h}_{v_\lambda}, \qquad \ZZ^{\{\omega\}}_{\{\V\}} := \varprojlim_{\lambda \rightarrow \infty} \varinjlim_{h \rightarrow \infty} \mathcal{Z}^{\omega,h}_{v_\lambda}, \]
where $\mathcal{Z}^{\omega,h}_{v_\lambda}$ denotes the Banach space consisting of all $\varphi \in C^{\infty}(\R^{d})$ such that
	\[ \norm{\varphi}_{\mathcal{Z}^{\omega,h}_{v_\lambda}} := \sup_{\alpha \in \N^d} \sup_{x \in \R^d}\frac{|\varphi^{(\alpha)}(x)|}{v_\lambda(x)}\exp \left(-\frac{1}{h}\phi^*(h|\alpha|)\right) < \infty . \] 
We use $\ZZ^{[\omega]}_{[\V]}$ as a common notation for $\ZZ^{(\omega)}_{(\V)}$ and $\ZZ^{\{\omega\}}_{\{\V\}}$. The  first main goal of this article is to characterize  when $\ZZ^{[\omega]}_{[\V]}$ is ultrabornological through conditions on $\V$. These conditions will be closely related to  the linear topological invariants $(DN)$ and $(\overline{\overline{\Omega}})$  for Fr\'echet spaces \cite{V-FuncFSp}. Following Grothendieck, the key idea in our proof is to complement the space $\ZZ^{[\omega]}_{[\V]}$ into a suitable weighed $(PLB)$-space of continuous functions and, vice versa, to complement a suitable  weighted $(PLB)$-space of sequences into $\ZZ^{[\omega]}_{[\V]}$. Hereafter, we shall obtain the desired characterization by applying  results from \cite{A-B-B-ProjLimWeighLBSpContFunc} concerning the ultrabornologicity of such $(PLB)$-spaces. To achieve the first step, we use tools from time-frequency analysis \cite{G-FoundationsTimeFreqAnal}, specifically,  the short-time Fourier transform and Gabor frames. Such techniques have recently proved to be useful in the study of (generalized) function spaces; see e.g.\  \cite{B-O-CharSConvMulSpSTFT, D-N-V-UltraVanishInf, D-V-TopPropConvSTFT, KPSV}. 

Schwartz \cite{Schwartz} showed that $\mathcal{O}_M$ is equal to the multiplier space of the space $\mathcal{S}$ of rapidly decreasing smooth functions, i.e.,
$$
\mathcal{O}_M = \{ f \in \mathcal{S}' ~  | ~  \varphi \cdot f \in \mathcal{S} \mbox { for all $\varphi \in \mathcal{S}$} \}.
$$
Moreover, the natural $(PLB)$-space topology of $\mathcal{O}_M$ coincides with the topology induced by the embedding
	\[ \mathcal{O}_{M} \rightarrow L_{b}(\mathcal{S}, \mathcal{S}), \, f \mapsto ( \varphi \mapsto \varphi \cdot f) . \]	
The second main goal of this article is to obtain a similar result for a wide class of  Gelfand-Shilov spaces \cite{D-N-V-NuclGSKerThm}. Given a weight function $\omega$ and a weight system $\V$, we define the Gelfand-Shilov spaces of Beurling and Roumieu type as
\[ \mathcal{S}^{(\omega)}_{(\V)} := \varprojlim_{h \rightarrow 0^{+}} \mathcal{S}^{\omega,h}_{v_h}, \qquad \mathcal{S}^{\{\omega\}}_{\{\V\}} := \varinjlim_{h \rightarrow \infty} \mathcal{S}^{\omega,h}_{v_h}, \]
where $\mathcal{S}^{\omega,h}_{v_h}$ denotes the Banach space consisting of all $\varphi \in C^{\infty}(\R^{d})$ such that
	\[ \norm{\varphi}_{\mathcal{S}^{\omega,h}_{v_h}} = \sup_{\alpha \in \N^d} \sup_{x \in \R^d}|\varphi^{(\alpha)}(x)|v_h(x)\exp \left(-\frac{1}{h}\phi^*(h|\alpha|)\right) < \infty . \] 
We shall show that $\ZZ^{[\omega]}_{[\V]}$ is topologically equal to the multiplier space of $\mathcal{S}^{[\omega]}_{[\V]}$ . This problem has been previously studied for Fourier (ultra)hyperfunctions  \cite{KWL, Morimoto, Zharinov} and for general Gelfand-Shilov spaces of non-quasianalytic type \cite{D-P-V-MultConvTempUltraDist}. Our main improvement here is that we also consider the quasianalytic case and that, in contrast to the aforementioned works, we obtain topological and not merely algebraic identities.  Furthermore, by using the above results, we are able to determine when such multiplier spaces are ultrabornological. In particular, Theorem \ref{UB-3} below shows that the multiplier space of the space of the Fourier ultrahyperfunctions is ultrabornological, whereas the one of the space of Fourier hyperfunctions is not. We mention that analogous results for convolutor spaces of Gelfand-Shilov spaces have recently been obtained by Vindas and the first author \cite{D-V-WeightedIndUltra} (see also \cite{S-InclThMoyalMultAlgGenGSSp}).

The structure of this article is as follows. In the preliminary Sections \ref{sect-wf} and \ref{sect-ws} we define and study weight functions, weight sequences and weight systems. In Section \ref{sect:GS} we introduce Gelfand-Shilov spaces and discuss the short-time Fourier transform and Gabor frames in the context of these function spaces.  Our main results are stated and discussed in Section \ref{sect-main}. In the auxiliary Section \ref{sect-cont} we review some results from \cite{A-B-B-ProjLimWeighLBSpContFunc} about weighted $(PLB)$-spaces of continuous functions. Finally, the proofs of our main results are given in Section \ref{sect-proofs}. For this we study the short-time Fourier transform and Gabor frame expansions on various function spaces.

\section{Weight functions and weight sequences}\label{sect-wf}
A non-decreasing continuous function $\omega: [0,\infty) \rightarrow [0,\infty)$ is called a \emph{weight function} (in the sense of Braun, Meise and Taylor \cite{B-M-T-UltraDiffFuncFAnal}) if $\omega(0) = 0$ and $\omega$ satisfies the following properties:
	\begin{itemize}
		\item[$(\alpha)$] $\omega(2t) = O(\omega(t))$ as $t\to \infty$;
		\item[$(\gamma)$] $\log t = o(\omega(t))$ as $t\to \infty$;
		\item[$(\delta)$] $\phi: [0, \infty) \rightarrow [0, \infty)$, $\phi(x) = \omega(e^{x})$, is convex.
	\end{itemize}
We extend $\omega$ to $\R^d$ as the radial function $\omega(x) = \omega(|x|)$, $x \in \R^d$. Condition $(\alpha)$ implies that there is $C > 0$ such that \cite[Lemma 1]{B-M-T-UltraDiffFuncFAnal}
\begin{equation}
\label{alpha-extended}
\omega(x+y) \leq C(\omega(x)+ \omega(y) +1), \qquad x,y \in \R^d.
\end{equation}
A weight function $\omega$ is called \emph{non-quasianalytic} if
$$
\displaystyle \int_0^\infty \frac{\omega(t)}{1+t^2} \dt < \infty.
$$
We refer to \cite{B-M-T-UltraDiffFuncFAnal} for more information on these conditions. 

The \emph{Young conjugate of $\phi$} is defined as
	\[ \phi^{*} : [0, \infty) \rightarrow [0, \infty) , ~  \phi^{*}(y) = \sup_{x \geq 0} \{xy - \phi(x)\}. \]
The function $\phi^*$ is convex and increasing, $(\phi^*)^* = \phi$  and the function $y \mapsto \phi^*(y)/y$ is increasing on $[0,\infty)$ and tends to infinity as $y \to \infty$. We shall often use the following lemma.
\begin{lemma}\cite[Lemma 2.6]{Heinrich}\label{M12}
Let $\omega$ be a weight function. Then,
\begin{itemize}
\item[$(i)$] For all $h,k,l > 0$ there are $m,C > 0$ such that
\begin{equation}
 \frac{1}{m}\phi^*(m(y+l)) + ky \leq \frac{1}{h}\phi^*(hy) + \log C, \qquad  y \geq 0.
\label{ineq-0}
\end{equation}
\item[$(ii)$] For all $m,k,l > 0$ there are $h,C > 0$ such that \eqref{ineq-0} holds.
\end{itemize}
\end{lemma}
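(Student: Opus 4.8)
The plan is to deduce both parts from two ingredients --- the superlinearity of $\phi$ (which controls the affine term $ky$ at the level of $\phi$) and condition $(\alpha)$ (which lets one absorb an affine term into $\phi^*$ as a dilation) --- glued by the trivial, order-reversing half of Fenchel duality. Parts $(i)$ and $(ii)$ then come out of one and the same computation, the only difference being which parameter is fixed last.

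\emph{Step 1: affine absorption into $\phi^*$.} I would first establish that for every $a>0$ there exist $L_a>1$ and $D_a>0$ such that
\[ \phi^*(z)+az \;\le\; \frac{1}{L_a}\,\phi^*(L_a z)+\log D_a, \qquad z\ge 0 . \]
Using $(\alpha)$ in the additive form $\omega(2t)\le L(1+\omega(t))$ (valid for all $t\ge 0$) and iterating $n$ times, with $a\le n\log 2$, one obtains $L_a>1$ and $D_a'>0$ with $\phi(s+a)\le L_a\phi(s)+\log D_a'$ for all $s\ge 0$, that is, $\phi(u-a)\ge L_a^{-1}\phi(u)-L_a^{-1}\log D_a'$ for $u\ge a$. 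Hence
\begin{align*}
\phi^*(z)+az &= \sup_{x\ge 0}\{xz-\phi(x)\}+az \;=\; \sup_{u\ge a}\{uz-\phi(u-a)\} \\
&\le \sup_{u\ge 0}\Big\{uz-\tfrac{1}{L_a}\phi(u)\Big\}+\tfrac{1}{L_a}\log D_a' \;=\; \frac{1}{L_a}\,\phi^*(L_a z)+\tfrac{1}{L_a}\log D_a' .
\end{align*}

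\emph{Step 2: reduction and choice of parameters.} Since $ky\le k(y+l)$, putting $z=m(y+l)$ and applying Step 1 with $a=k$ gives
\[ \frac{1}{m}\,\phi^*(m(y+l))+ky \;\le\; \frac{1}{m}\big[\phi^*(z)+kz\big] \;\le\; \frac{1}{mL_k}\,\phi^*\big(mL_k(y+l)\big)+\log D_1 \]
for a constant $D_1$. Write $\mu:=mL_k$. It remains to exhibit a constant $D_2$ and a value of the remaining free parameter for which
\[ \frac{1}{\mu}\,\phi^*(\mu(y+l)) \;\le\; \frac{1}{h}\,\phi^*(hy)+\log D_2, \qquad y\ge 0 . \]
By definition of the Young conjugate, $\tfrac{1}{\mu}\phi^*(\mu(y+l))=\sup_{x\ge 0}\{x(y+l)-\tfrac{1}{\mu}\phi(x)\}$ and $\tfrac{1}{h}\phi^*(hy)=\sup_{x\ge 0}\{xy-\tfrac{1}{h}\phi(x)\}$, so, after taking suprema, the last display is implied by the pointwise inequality $x(y+l)-\tfrac{1}{\mu}\phi(x)\le xy-\tfrac{1}{h}\phi(x)+\log D_2$ for all $x\ge 0$ --- equivalently, by $\big(\tfrac{1}{\mu}-\tfrac{1}{h}\big)\phi(x)+\log D_2\ge lx$ for all $x\ge 0$. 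If $\mu<h$, the coefficient of $\phi(x)$ is positive, and since $(\gamma)$ yields $\phi(x)/x=\omega(e^x)/x\to\infty$, there is $x_0$ with $\big(\tfrac{1}{\mu}-\tfrac{1}{h}\big)\phi(x)\ge lx$ for $x\ge x_0$; taking $\log D_2:=lx_0$ handles $x\le x_0$. To conclude: in case $(i)$, with $h$ given, take $m:=h/(2L_k)$, so $\mu=h/2<h$; in case $(ii)$, with $m$ given, take $h:=2mL_k>\mu$. In either case \eqref{ineq-0} holds with $C:=D_1D_2$, and, since $y\mapsto\phi^*(y)/y$ is increasing, also for every smaller $m$ (resp. larger $h$).

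I expect Step 1 to be the only delicate point. Beyond the bookkeeping of iterating $(\alpha)$ uniformly in $s\ge 0$, the essential observation is that the affine term $ky$ must be absorbed into $\phi^*$ as the dilation $\phi^*(z)\mapsto L_a^{-1}\phi^*(L_a z)$, and \emph{not} by carving off a fraction of $\tfrac{1}{h}\phi^*(hy)$: the latter route forces a comparison of $\phi$ at dilated arguments, $\phi(x)$ versus $\phi(cx)$, which $(\alpha)$ does not control, since $\omega(t^{c})$ is in general only dominated by a power of $\omega(t)$.
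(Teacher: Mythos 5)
Your argument is correct: Step 1 (absorbing the linear term into a dilated conjugate via the additive form of $(\alpha)$), the one-sided Fenchel comparison in Step 2, and the use of $(\gamma)$ to get $\phi(x)/x\to\infty$ together give \eqref{ineq-0}, with the parameter chosen so that $\mu=mL_k<h$ in case $(i)$ and $h>mL_k$ in case $(ii)$. The paper itself gives no proof — it simply cites \cite[Lemma 2.6]{Heinrich} — and your derivation is essentially the standard argument behind that reference, so there is nothing substantively different to compare.
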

A sequence $M = (M_p)_{p \in \N}$ of positive numbers is called a \emph{weight sequence} \cite{K-Ultradistributions1}  if $M_p^{1/p} \rightarrow \infty$ as $p \to \infty$ and $M$ is log-convex, i.e.,  $M^2_p \leq M_{p-1}M_{p+1}$ for all $p \in \Z_+$. We set $m_p = M_{p}/M_{p-1}$, $p \in \Z_+$.
We consider the following  conditions on a weight sequence $M$:
\begin{itemize}
\item[$(M.2)' $]  $\ds M_{p+1} \leq CH^{p+1} M_{p}$, $p \in  \N$, for some $C,H > 0$;
\item[$(M.2)\ $]  $\ds M_{p+q} \leq CH^{p+q} M_{p} M_{q}$, $p,q\in \N$, for some $C,H > 0$;
\item[$(M.2)^*$] $2m_p \leq m_{Np}$, $p \geq p_0$, for some $p_0,N \in \Z_+$.
\end{itemize}
Clearly, $(M.2)$ implies $(M.2)'$. A weight sequence $M$ is called \emph{non-quasianalytic} if
$$
\sum_{p = 1}^\infty \frac{1}{m_p} < \infty.
$$
Conditions $(M.2)'$ and $(M.2)$  are due to Komatsu \cite{K-Ultradistributions1}. Condition $(M.2)^*$ was introduced by Bonet et al.\ \cite{B-M-M-CompDifferentClassUltraDiffFunc} without a name; we use here the same notation as in \cite{D-V-WeightedIndUltra}. The most important examples of weight sequences satisfying $(M.2)$ and $(M.2)^*$ are the \emph{Gevrey sequences} $p!^{s}$, $s>0$. The sequence $p!^{s}$ is non-quasianalytic if and only if $s > 1$.

Given two weight sequences $M$ and $N$, the relation $M \subset N$  means that there are $C,H > 0$ such that
$M_p\leq CH^{p}N_{p}$ for all $p\in\N$. The stronger relation $M \prec N$ means that the latter inequality is valid for every $H > 0$ and suitable $C > 0$. 

The \emph{associated function} of a weight sequence $M$ is defined as
	\[ \omega_{M}(t) =  \sup_{p \in \N}  \log \frac{t^p M_{0}}{M_{p}}, \qquad t \geq 0. \]
Given another weight sequence $N$, it holds that  $N \subset M$ if and only if 
$$
\omega_M(t) \leq \omega_N(Ht) + \log C, \qquad t \geq 0,
$$
for some $C,H > 0$ \cite[Lemma 3.8]{K-Ultradistributions1}. Similarly, $N \prec M$ if and only if the latter inequality remains valid for every $H>0$ and suitable $C>0$ \cite[Lemma 3.10]{K-Ultradistributions1}.

The next result explains when the weight sequence case can be reduced to the  weight function case.
\begin{lemma}  \cite[Proposition 13 and its proof]{B-M-M-CompDifferentClassUltraDiffFunc}
		\label{p:M2*equivBMT}
		Let $M$ be a weight sequence satisfying $(M.2)$. Then, $\omega_{M}$ is a weight function if and only if $M$ satisfies $(M.2)^{*}$. In such a case, the following properties hold (with $\phi_M(x) = \omega_M(e^{x})$):
\begin{itemize}
\item[$(i)$] For all $h> 0$ there are $k,C > 0$ such that
\begin{equation}
\exp\left(\frac{1}{k}\phi^*_M(kp)\right) \leq Ch^pM_p, \qquad p \in \N.
\label{ineq-1}
\end{equation}
\item[$(ii)$] For all $h> 0$ there are $k,C > 0$ such that
\begin{equation}
k^pM_p\leq C\exp \left (\frac{1}{h}\phi^*_M(hp) \right) , \qquad p \in \N.
\label{ineq-2}
\end{equation}
\item[$(iii)$] For all $k> 0$ there are $h,C > 0$ such that \eqref{ineq-1} holds.
\item[$(iv)$] For all $k> 0$ there are $h,C > 0$ such that \eqref{ineq-2} holds.
\end{itemize}
\end{lemma}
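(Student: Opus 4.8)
The plan is to separate out what actually needs to be checked, reduce it to comparing the doubling of $\omega_M$ with that of an associated counting function, and then read off the estimates \eqref{ineq-1}--\eqref{ineq-2} from Lemma~\ref{M12}. For \emph{every} weight sequence $M$, the function $\omega_M$ is non-decreasing, continuous and satisfies $\omega_M(0)=0$, and it automatically satisfies $(\gamma)$ (because $M_p^{1/p}\to\infty$) and $(\delta)$ (because $\phi_M(x)=\sup_p(px-\log(M_p/M_0))$ is a supremum of affine functions, hence convex); see \cite{K-Ultradistributions1}. Thus $\omega_M$ is a weight function precisely when it satisfies $(\alpha)$, i.e.\ $\omega_M(2t)=O(\omega_M(t))$, and the whole point is to prove that, under $(M.2)$, condition $(\alpha)$ for $\omega_M$ is equivalent to $(M.2)^{*}$. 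For the second half of the statement I will also use the Fenchel identity $\log(M_p/M_0)=\phi^{*}_M(p)$, $p\in\N$, valid since $M$ is log-convex.

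The tools I would set up are: the counting function $\nu_M(t):=\#\{p : m_p\le t\}$; the integral representation $\omega_M(t)=\int_0^t \nu_M(u)u^{-1}\,\du$; the fact, available because $M$ satisfies $(M.2)$, that there are $H\ge 2$ and $A\ge 1$ with $2\omega_M(t)\le\omega_M(Ht)+\log A$ for all $t$ (all standard, see \cite{K-Ultradistributions1}; the last follows from the duality $M_p/M_0=\sup_{t>0}t^{p}e^{-\omega_M(t)}$); and an elementary index‑versus‑value inversion showing that $(M.2)^{*}$ is equivalent to a doubling bound for $\nu_M$, namely $\nu_M(2t)\le C\nu_M(t)+C$ for all $t$ and some $C>0$ (that $m$ fails to grow by a factor $2$ between index $q$ and index $Nq$ is the same, up to bounded corrections in the indices, as $\nu_M$ jumping by more than a factor $N$ between $s$ and $2s$).

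Granting this last equivalence, $(M.2)^{*}\Rightarrow(\alpha)$ is quick: $\omega_M(2t)-\omega_M(t)=\int_t^{2t}\nu_M(u)u^{-1}\,\du\le\nu_M(2t)\log2\lesssim\nu_M(t)+1\lesssim\omega_M(t)+1$, where the middle step is the doubling of $\nu_M$ and the last uses $\omega_M(t)\ge\int_{t/2}^{t}\nu_M(u)u^{-1}\,\du\ge\nu_M(t/2)\log2$ together with that doubling; so $\omega_M(2t)\le L(\omega_M(t)+1)$. (This direction does not even use $(M.2)$.) For the converse, which is where $(M.2)$ is genuinely used, rewrite $2\omega_M(t)\le\omega_M(Ht)+\log A$ as the two comparisons $\nu_M(t)\log H\le\omega_M(Ht)-\omega_M(t)\le\omega_M(Ht)$ and $\omega_M(t)\le\omega_M(Ht)-\omega_M(t)+\log A\le\nu_M(Ht)\log H+\log A$, valid for all $t$. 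Since $H$ is a fixed constant, $(\alpha)$ iterates to $\omega_M(H^{2}t)\le L'(\omega_M(t)+1)$; feeding this into the two comparisons gives first $\omega_M(t)\lesssim\nu_M(t)+1$ and then $\nu_M(2t)\log H\le\omega_M(2Ht)\lesssim\omega_M(t)\lesssim\nu_M(t)+1$, i.e.\ the doubling of $\nu_M$, hence $(M.2)^{*}$.

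Finally, once $\omega_M$ is known to be a weight function, Lemma~\ref{M12} applies with $\phi=\phi_M$; combining \eqref{ineq-0} with $\log(M_p/M_0)=\phi^{*}_M(p)$ yields \eqref{ineq-1}--\eqref{ineq-2} — for instance \eqref{ineq-1} follows from Lemma~\ref{M12}(i) with ``$h$''$=1$, a small $l$ and ``$k$''$=\max(0,\log(1/h))$, the monotonicity of $\phi^{*}_M$ absorbing the shift and the linear term — and the three remaining statements come out in the same way from the two parts of Lemma~\ref{M12}. The hard part is the converse $(\alpha)\Rightarrow(M.2)^{*}$; really, it is having the dictionary in place — in particular the translation of $(M.2)$ into $2\omega_M(t)\le\omega_M(Ht)+\log A$ and the integral representation of $\omega_M$ — after which the argument is just a short chain of inequalities, the only care needed being with constants and with the fixed dilation $H$, which is harmless because $\nu_M$ and $\omega_M$ are increasing.
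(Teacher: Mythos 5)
The paper itself offers no proof here (it simply cites Bonet--Meise--Melikhov), so your argument is a genuinely self-contained alternative, and its core is sound: the reduction of ``$\omega_M$ is a weight function'' to condition $(\alpha)$ alone, the dictionary $(M.2)\Leftrightarrow 2\omega_M(t)\le\omega_M(Ht)+\log A$, the integral representation through the counting function, and the chain of inequalities proving $(\alpha)\Leftrightarrow(M.2)^*$ are all correct (and are the same circle of ideas the authors use in the proof of Lemma \ref{cond-seq}$(v)$). Two small points should be made explicit: the ``index-versus-value inversion'' needs a word about plateaus of $(m_p)$ --- from doubling applied at $t=m_p/2$ one first gets that the largest index $j$ with $m_j=m_p$ is $O(p)$, and only then does $\nu_M(2m_p)\le C\nu_M(m_p)+C$ yield $(M.2)^*$ --- and the identity $\log(M_p/M_0)=\phi^*_M(p)$ holds with the paper's conjugate (supremum over $x\ge 0$) only up to the additive constant $\omega_M(1)$, which is harmless but worth saying.

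The genuine gap is in the last paragraph: items $(ii)$ and $(iii)$ of the Lemma do \emph{not} follow from Lemma \ref{M12} combined with the Fenchel identity. Writing $P=\phi^*_M$, these two items require an estimate of the form $\tfrac1kP(kp)\le P(p)+p\log h+\log C$ in which the \emph{larger} scale ($k$ in $(iii)$, $1$ versus the given small $h$ in $(ii)$) is prescribed and only a linear term with freely chosen slope is allowed on the smaller-scale side; both parts of Lemma \ref{M12} instead compare one prescribed scale with an existentially produced one and always place the linear term next to the smaller scale. Such an estimate is simply false for a general weight function: for $\phi(x)=x^2$ one has $\phi^*(y)=y^2/4$, and $\tfrac1k\phi^*(kp)=kp^2/4$ cannot be dominated by $p^2/4+O(p)$ when $k>1$, although Lemma \ref{M12} holds for this $\phi$; hence no formal manipulation of Lemma \ref{M12} can produce $(ii)$ or $(iii)$. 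What is needed is $(M.2)$ itself: the inequality $2\omega_M(t)\le\omega_M(Ht)+\log A$ you already introduced conjugates to $\phi^*_M(2y)\le 2\phi^*_M(y)+2y\log H+\log A$ for all $y\ge 0$, which iterated gives $\tfrac{1}{2^j}\phi^*_M(2^jy)\le\phi^*_M(y)+jy\log H+O(1)$; choosing $2^j\ge k$ (resp.\ $2^j\ge 1/h$) and using that $s\mapsto\tfrac1s\phi^*_M(sy)$ is non-decreasing, this yields $(iii)$ and $(ii)$ after converting back via $\log(M_p/M_0)=\phi^*_M(p)$. Items $(i)$ and $(iv)$ do follow from Lemma \ref{M12}$(i)$ and $(ii)$ exactly as you indicate, so the repair is local, but as written the claim that ``the three remaining statements come out in the same way'' is incorrect.
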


\section{Weight systems}\label{sect-ws} 
Let $X$ be a topological space.  A family $\V = \{ v_{\lambda} ~ |~  \lambda \in (0,\infty) \}$ of continuous functions $v_\lambda: X \rightarrow (0,\infty)$ is called a \emph{weight system} \cite{D-N-V-NuclGSKerThm} if  $1 \leq v_{\lambda}(x) \leq v_{\mu}(x)$ for all $x \in X$ and  $\mu \leq \lambda$.
The following two conditions play a crucial role in this article. 
\begin{definition}
		A weight system $\V$ on $X$ is said to satisfy condition $(\condDN)$ if
			\begin{gather*} 
				\exists \lambda ~ \forall \mu \leq \lambda ~ \forall \theta \in (0, 1) ~ \exists \nu \leq \mu  ~ \exists C > 0 ~ \forall x \in X ~:~ 
				v_\mu(x) \leq C v^{\theta}_\lambda(x) v_\nu^{1 - \theta}(x).
			\end{gather*}
	\end{definition}
	
	\begin{definition}
		A weight system $\V$ on $X$ is said to satisfy condition $(\condooOmega)$ if
			\begin{gather*}
				\forall \lambda  ~ \exists \mu \geq \lambda ~ \forall \nu \geq \mu   ~ \forall \theta \in (0, 1) ~ \exists C > 0 ~ \forall x \in X ~: ~ 
				v_{\mu}(x) \leq C v^\theta_{\lambda}(x) v^{1 - \theta}_{\nu}(x).
			\end{gather*}
	\end{definition}

	\begin{remark}
		The previous conditions are inspired by and closely related to the linear topological invariants $(DN)$ and $(\overline{\overline{\Omega}})$  for Fr\'echet spaces \cite{V-FuncFSp}.
	\end{remark}
Next, we consider weight systems on $\R^d$. We write $\widetilde{f}(t) = f(-t)$ for reflection about the origin. Given a weight function system $\V$ on $\R^d$, we write $\widetilde{\V} = \{ \widetilde{v}_\lambda ~  | ~  \lambda \in (0,\infty)\}$. We consider the following conditions on a weight system $\V$ on $\R^d$:
	\begin{itemize}
		\item[$(\condwM)$] $\forall \lambda ~ \exists \mu \leq \lambda ~ \exists C > 0 ~ \forall x \in \R^{d} ~ :~   \sup_{|y| \leq 1} v_{\lambda}(x + y) \leq C v_{\mu}(x)$;
		\item[$\{\condwM\}$] $\forall \mu  ~ \exists \lambda \geq \mu ~ \exists C > 0 ~ \forall x \in \R^{d} ~ :~  \sup_{|y| \leq 1} v_{\lambda}(x + y) \leq C v_{\mu}(x)$;
		\item[$(\condM)$] $\forall \lambda ~ \exists \mu, \nu \leq \lambda ~ \exists C > 0 ~ \forall x, y \in \R^{d} ~ :~  v_\lambda(x+y)\leq Cv_{\mu}(x) v_{\nu}(y) $; 
		\item[$\{\condM\}$] $\forall  \mu, \nu  ~ \exists \lambda \geq  \mu, \nu ~ \exists C > 0 ~ \forall x, y \in \R^{d} ~ :~  v_\lambda(x+y)\leq Cv_{\mu}(x) v_{\nu}(y)$;
		\item[$(\condN)$] $\forall \lambda  ~ \exists \mu \leq \lambda ~ :~  v_{\lambda} / v_{\mu} \in L^{1}$;
		\item[$\{\condN\}$] $\forall \mu  ~ \exists \lambda \geq \mu ~ :~  v_{\lambda} / v_{\mu} \in L^{1}$;
		\item[$(\condSquare)$] $\forall \lambda, \mu ~ \exists \nu \leq \lambda, \mu ~ \exists C > 0 ~ \forall x \in \R^{d} : v_{\lambda}(x) v_{\mu}(x) \leq C v_{\nu}(x)$; 
		\item[$\{\condSquare\}$] $\forall \nu   ~ \exists \lambda, \mu \geq \nu ~ \exists C > 0 ~ \forall x \in \R^{d} :v_{\lambda}(x) v_{\mu}(x) \leq C v_{\nu}(x)$.  
	\end{itemize}
\begin{notation}\label{notation}
 We employ $[\condwM]$ as a common notation for $(\condwM)$ and $\{\condwM\}$. A similar convention will be used for other notations. In addition, we  often first state assertions for the  Beurling case  followed in parenthesis by the corresponding ones for the Roumieu case.
\end{notation}	
	
Clearly, $[\operatorname{M}]$ implies $[\condwM]$.  A simple induction argument shows that $[\condwM]$ yields that
\begin{gather}
 \label{joo}
\forall a > 0 ~  \forall \lambda ~ \exists \lambda' \leq \lambda ~ ( \forall a > 0 ~ \forall \lambda'  ~ \exists \lambda \geq \lambda' ) \\ \nonumber
~ \exists C > 0 ~ \forall x \in \R^{d} ~  : ~ \sup_{|y| \leq a}v_{\lambda}(x+y) \leq C v_{\lambda'}(x).
\end{gather}
By  using the above formula twice, we obtain that  $[\condwM]$ implies that
\begin{gather*}
\forall a > 0 ~  \forall \lambda ~ \exists \lambda' \leq \lambda ~ \forall \mu' ~\exists \mu \leq \mu' ~ ( \forall a > 0 ~ \forall \lambda'  ~ \exists \lambda \geq \lambda' ~ \forall \mu ~ \exists \mu' \geq \mu ) \\ \nonumber
 \exists C > 0 ~ \forall x \in \R^{d} ~  : ~ \sup_{|y| \leq a}\frac{v_{\lambda}(x+y)}{v_{\mu}(x+y)} \leq C \frac{v_{\lambda'}(x)}{v_{\mu'}(x)}.
\end{gather*}
Consequently, $[\condwM]$ and $[\condN]$ imply that
\begin{equation}
\forall \lambda  ~ \exists \mu \leq \lambda  ~ ( \forall \mu  ~ \exists \lambda \geq \mu ) ~  : ~  \lim_{|x| \to \infty}\frac{v_{\lambda}(x)}{v_{\mu}(x)} = 0 \
\label{C0-wfs}
\end{equation}
and	
\begin{equation}
\forall a > 0 ~ \forall \lambda  ~ \exists \mu \leq \lambda ~ ( \forall a > 0 ~  \forall \mu  ~ \exists \lambda \geq \mu) ~ :~   \sum_{k  \in a \Z^d} \frac{v_\lambda(k)}{v_\mu(k)} < \infty.
\label{discrete-l1}
\end{equation}
We refer to \cite{D-N-V-NuclGSKerThm} for more information on these conditions.

We end this section by discussing the above conditions for two classes of weight systems on $\R^d$. 
Given a weight function $\omega$, we define
$$
\V_\omega := \{ e^{\frac{1}{\lambda} \omega } ~  | ~  \lambda \in (0,\infty) \}.
$$
\begin{lemma}\label{cond-wf}
Let $\omega$ be a weight function. Then, 
\begin{itemize}
\item[$(i)$]  $\V_\omega$ satisfies $[\condM]$, $[\condN]$ and $[\condSquare]$.
\item[$(ii)$] $\V_\omega$ satisfies $(\condDN)$.
\item[$(iii)$] $\V_\omega$ does not satisfy $(\condooOmega)$.
\end{itemize} 
\end{lemma}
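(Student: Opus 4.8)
The plan is to verify each property by direct computation, exploiting that $\V_\omega$ consists of exponentials $e^{\omega/\lambda}$ and that the relevant inequalities for weight systems translate into inequalities for the weight function $\omega$ itself. The overarching observation is that since $\omega$ is radial and non-decreasing, all four ``submultiplicativity-type'' conditions reduce to elementary properties of $\omega$, and the two linear-topological-invariant-type conditions reduce to genuine convexity/subadditivity features of $\omega$.

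For part $(i)$, I would argue as follows. For $[\condM]$: condition $(\alpha)$ (i.e.\ $\omega(2t) = O(\omega(t))$) is well known to be equivalent to quasi-subadditivity of $\omega$, namely $\omega(s+t) \leq L(\omega(s) + \omega(t)) + L$ for some $L \geq 1$; applying this with the right choice of indices, one gets $e^{\frac{1}{\lambda}\omega(x+y)} \leq C\, e^{\frac{1}{\mu}\omega(x)} e^{\frac{1}{\nu}\omega(y)}$ whenever $\mu,\nu \leq \lambda$ are chosen with $L/\mu, L/\nu \leq 1/\lambda$ (so e.g.\ $\mu = \nu = \lambda/L$ in the Beurling case; in the Roumieu case pick $\lambda \geq L\max\{\mu,\nu\}$). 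Then $[\condwM]$ follows from $[\condM]$, or directly: for $|y|\leq 1$, $\omega(x+y)\leq\omega(|x|+1)\leq L(\omega(|x|)+\omega(1))+L$, which is $\leq C\,\omega(x)$ after raising exponent. For $[\condN]$: by $(\gamma)$, $\log t = o(\omega(t))$, so $t^{d+1} = o(e^{\varepsilon\omega(t)})$ for every $\varepsilon > 0$; hence for $\mu \leq \lambda$ with $1/\mu \leq 1/\lambda - \varepsilon$ one has $e^{\frac{1}{\lambda}\omega}/e^{\frac{1}{\mu}\omega} = e^{(\frac{1}{\lambda}-\frac{1}{\mu})\omega} \leq e^{-\varepsilon\omega}$, which is integrable on $\R^d$ because $e^{-\varepsilon\omega(x)} \leq C_N(1+|x|)^{-N}$ for all $N$ (again by $(\gamma)$). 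Condition $[\condSquare]$ is immediate: $e^{\frac{1}{\lambda}\omega}e^{\frac{1}{\mu}\omega} = e^{(\frac{1}{\lambda}+\frac{1}{\mu})\omega} = e^{\frac{1}{\nu}\omega}$ with $\frac{1}{\nu} = \frac{1}{\lambda}+\frac{1}{\mu}$, so $\nu \leq \lambda,\mu$ works (Beurling), and in the Roumieu case given $\nu$ take $\lambda,\mu \geq \nu$ with $\frac1\lambda + \frac1\mu \leq \frac1\nu$.

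For part $(ii)$, condition $(\condDN)$ with $v_\lambda = e^{\frac1\lambda\omega}$ reads: $\exists\lambda\ \forall\mu\leq\lambda\ \forall\theta\in(0,1)\ \exists\nu\leq\mu\ \exists C\ \forall x$: $\frac1\mu\omega(x) \leq \log C + \theta\frac1\lambda\omega(x) + (1-\theta)\frac1\nu\omega(x)$. Since $\omega \geq 0$, it suffices that $\frac1\mu \leq \theta\frac1\lambda + (1-\theta)\frac1\nu$, i.e.\ $\frac1\nu \geq \frac{1}{1-\theta}(\frac1\mu - \frac\theta\lambda)$; as $\mu \leq \lambda$ gives $\frac1\mu - \frac\theta\lambda \geq \frac{1-\theta}\mu > 0$, one can simply solve for a finite $\nu \leq \mu$ (indeed $\nu = (1-\theta)/(\frac1\mu - \frac\theta\lambda) \leq \mu$), so $(\condDN)$ holds with any choice of $\lambda$ and $C=1$; no special property of $\omega$ beyond positivity is needed. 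For part $(iii)$, I expect the failure of $(\condooOmega)$ to be the one genuinely delicate point. Writing out $(\condooOmega)$: $\forall\lambda\ \exists\mu\geq\lambda\ \forall\nu\geq\mu\ \forall\theta\in(0,1)\ \exists C\ \forall x$: $\frac1\mu\omega(x)\leq\log C + \theta\frac1\lambda\omega(x) + (1-\theta)\frac1\nu\omega(x)$. Fix $\lambda$ and an arbitrary candidate $\mu\geq\lambda$; I would choose $\nu\geq\mu$ very large (so $\frac1\nu \approx 0$) and $\theta$ small, so that $\theta\frac1\lambda + (1-\theta)\frac1\nu < \frac1\mu$; then the inequality forces $(\frac1\mu - \theta\frac1\lambda - (1-\theta)\frac1\nu)\omega(x) \leq \log C$ for all $x$, i.e.\ $\omega$ bounded, contradicting $(\gamma)$ (which forces $\omega(t)\to\infty$). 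The only thing to check carefully is that the quantifier order genuinely permits this: $\mu$ is chosen by the adversary before us, but $\nu$ and $\theta$ are ours, so picking $\theta < \lambda/\mu$ and then $\nu$ large enough that $(1-\theta)/\nu < 1/\mu - \theta/\lambda$ does the job. Hence $(\condooOmega)$ fails. The main obstacle is simply being careful with the alternating quantifiers in $(\condDN)$ and $(\condooOmega)$ and with the Beurling/Roumieu bookkeeping in part $(i)$; the analytic input (quasi-subadditivity from $(\alpha)$, subexponential polynomial growth from $(\gamma)$) is standard.
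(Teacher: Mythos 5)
Your verification is correct and is exactly the straightforward computation that the paper's proof of Lemma \ref{cond-wf} leaves to the reader: reduce each condition for $\V_\omega$ to an inequality between multiples of $\omega$, using quasi-subadditivity from $(\alpha)$ for $[\condM]$, the bound $e^{-\varepsilon\omega(x)}\leq C_N(1+|x|)^{-N}$ from $(\gamma)$ for $[\condN]$, exact arithmetic of the exponents for $[\condSquare]$ and $(\condDN)$, and unboundedness of $\omega$ for the failure of $(\condooOmega)$. Only two inline inequalities have their directions reversed and should be fixed when writing this up --- for $[\condM]$ you need $\mu,\nu\leq\lambda/L$ (as your example $\mu=\nu=\lambda/L$ correctly has it), and for $[\condN]$ you need $1/\mu\geq 1/\lambda+\varepsilon$ --- but since the concrete parameter choices you give are the right ones, these are typos rather than gaps.
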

\begin{proof}
$(i)$ Condition $[\condM]$ is a consequence of \eqref{alpha-extended}, $[\condN]$ follows from $(\gamma)$ and  $[\condSquare]$ is clear.

$(ii)$ This is obvious. 

$(iii)$ Since $\omega(t) \to \infty$ as $t \to \infty$, $(\condooOmega)$ for $\V_\omega$ would imply that
\begin{gather*}
				\forall \lambda  ~ \exists \mu \geq \lambda ~ \forall \nu \geq \mu   ~ \forall \theta \in (0, 1) ~  : ~  \frac{1}{\mu} \leq  \frac{\theta}{\lambda} +  \frac{1-\theta}{\nu},
								\end{gather*}
which is false. 								
\end{proof}
Given a weight sequence $M$, we define
$$
\V_M := \{ e^{\omega_M\left(\frac{1}{\lambda} \, \cdot \,  \right)} ~  | ~  \lambda \in (0,\infty) \}.
$$
\begin{lemma}\label{cond-seq}
Let $M$ be a weight sequence. Then, 
\begin{itemize}
\item[$(i)$]  $\V_M$ satisfies $[\condM]$.
\item[$(ii)$]  $\V_M$ satisfies  $[\condN]$ if and only if $M$ satisfies $(M.2)'$.
\item[$(iii)$]  $\V_M$ satisfies  $[\condSquare]$ if and only if $M$ satisfies $(M.2)$.
\item[$(iv)$] $\V_M$  satisfies $(\condDN)$.  
\item[$(v)$] $\V_M$ satisfies $(\condooOmega)$ if and only if 
\begin{equation}
\exists C > 0 ~  \forall N \in \Z_+ ~   \exists p_0 \in \Z_+ ~  \forall p \geq p_0 ~  : ~   m_{Np} \leq Cm_p.
\label{Omega-seq}
\end{equation}
\end{itemize} 
\end{lemma}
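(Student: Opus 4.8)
The plan is to translate each condition on the weight system $\V_M = \{e^{\omega_M(\,\cdot\,/\lambda)}\}$ into an equivalent inequality for the associated function $\omega_M$, and then use Komatsu's dictionary between growth of $\omega_M$ and the combinatorial conditions on $M$. Concretely, for any $\lambda,\mu>0$ the pointwise estimate $v_\lambda(x)\le C v_\mu(x)$ over $x\in\R^d$ is (taking logs and using radiality) equivalent to $\omega_M(t/\lambda)\le \omega_M(t/\mu)+\log C$ for all $t\ge 0$, i.e. to a rescaling estimate for $\omega_M$; and products $v_\lambda v_\mu$ correspond to sums $\omega_M(t/\lambda)+\omega_M(t/\mu)$. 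So the whole lemma reduces to manipulating $\omega_M(Ht)$ versus $\omega_M(t)$, for which I will invoke the two cited results of Komatsu: $N\subset M \iff \omega_M(t)\le\omega_N(Ht)+\log C$ (\cite[Lemma 3.8]{K-Ultradistributions1}) and its $\prec$-analogue (\cite[Lemma 3.10]{K-Ultradistributions1}), together with the definition $\omega_M(t)=\sup_p\log\bigl(t^pM_0/M_p\bigr)$.

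For (i), I would show directly from the definition of $\omega_M$ that $\omega_M(s+t)\le \omega_M(2s)+\omega_M(2t)$ (or a similar subadditive-type bound), which after the substitution $x\mapsto x+y$ gives $[\condM]$ with the parameters $\mu=\nu=\lambda/2$ (Beurling) resp.\ $\lambda=2\max(\mu,\nu)$ (Roumieu); no condition on $M$ is needed because log-convexity alone suffices. For (ii), $v_\lambda/v_\mu=e^{\omega_M(\cdot/\lambda)-\omega_M(\cdot/\mu)}\in L^1(\R^d)$ is equivalent (in dimension $d$, using radial integration) to the statement that $e^{\omega_M(t/\lambda)-\omega_M(t/\mu)}$ decays faster than any power of $t$, i.e. to $\omega_M(t/\mu)-\omega_M(t/\lambda)\ge (d+1)\log t - \log C$ eventually; by Komatsu's lemma this says precisely that $\omega_M(t/\mu)\le \omega_M(t/\lambda)+\cdots$ fails to be sharp by a logarithmic margin, which unwinds to $M_{p+1}\le C H^{p+1}M_p$, i.e. $(M.2)'$. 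The cleanest route here is to recall that $(M.2)'$ is equivalent to $\omega_M(2t)=O(\omega_M(t))$ in the form $\omega_M(t)-\omega_M(t/H)$ being unbounded-above like $\omega_M$ itself, and to quote the standard fact (Komatsu) that $(M.2)'\iff \forall H\,\exists H'$ with $\omega_M(t)\le \omega_M(H't)+\log C$ replaced by a strict gain; I will cite \cite[Lemma 3.8]{K-Ultradistributions1} for the nontrivial direction. For (iii), the identity $\omega_M(t/\lambda)+\omega_M(t/\mu)$ versus $\omega_M(t/\nu)$ is exactly the associated-function form of $(M.2)$ — indeed $(M.2)$ is classically equivalent to $2\omega_M(t)\le \omega_M(Ht)+\log C$ — so $[\condSquare]$ holds iff $(M.2)$, again via the $\omega_M\leftrightarrow M$ correspondence.

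For (iv), I would verify $(\condDN)$ by a convexity/interpolation argument: taking $\lambda$ arbitrary and, given $\theta\in(0,1)$, choosing $\nu$ suitably small (Beurling) one needs $\omega_M(t/\mu)\le \theta\,\omega_M(t/\lambda)+(1-\theta)\omega_M(t/\nu)+\log C$; since $r\mapsto \omega_M(e^r)=\phi_M(r)$ is convex and $\omega_M$ is increasing, one can realize the exponent $1/\mu$ as the appropriate point on the segment between $1/\lambda$ and $1/\nu$, which gives the estimate with no hypothesis on $M$. The one genuinely substantive part — and the step I expect to be the main obstacle — is (v), the equivalence of $(\condooOmega)$ for $\V_M$ with the condition \eqref{Omega-seq} on the shift quotients $m_p=M_{p+1}/M_p$. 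Here I would first rewrite $(\condooOmega)$ in associated-function form: $\forall\lambda\,\exists\mu\,\forall\nu\,\forall\theta\,\exists C$ with $(1-\theta)\bigl(\omega_M(t/\nu)-\omega_M(t/\mu)\bigr)\ge \theta\bigl(\omega_M(t/\mu)-\omega_M(t/\lambda)\bigr)-\log C$; the crux is converting the behaviour of differences $\omega_M(Ht)-\omega_M(t)$ into statements about $m_p$. The technical heart is the formula, valid for log-convex $M$, that $\omega_M(t)$ increases by (roughly) $\log(t/m_p)$ as $t$ crosses the threshold $m_p$, so that $\omega_M(Ht)-\omega_M(t)\approx \#\{p: m_p\le Ht \}\cdot \log H$ modulo controlled errors — making the "linear growth in $\log t$" of the difference quotient equivalent to a uniform multiplicative gap $m_{Np}\le C m_p$. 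I would make this precise using the counting function $p\mapsto$ (number of $q$ with $m_q\le t$) and the standard identity relating it to $\omega_M$; the bookkeeping between the additive $\log$-scale estimates and the discrete index $N$ is where the real work lies, and I would model it closely on the proof of \cite[Proposition 13]{B-M-M-CompDifferentClassUltraDiffFunc} and the arguments in \cite{D-V-WeightedIndUltra}.
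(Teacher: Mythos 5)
Your plan follows essentially the same route as the paper: (i) via monotonicity of $\omega_M$; (iv) via the interpolation identity $\omega_M(Ht)\leq\theta\,\omega_M(t)+(1-\theta)\,\omega_M(H^{1/(1-\theta)}t)$, i.e.\ convexity of $\phi_M$ as a supremum of affine functions; (ii) and (iii) via the classical associated-function characterizations of $(M.2)'$ and $(M.2)$ (the paper simply cites \cite{D-V-WeightedIndUltra} and \cite{K-Ultradistributions1} here); and (v) via the counting function $m$ of $(m_p)_p$ together with Komatsu's formula $\omega_M(t)=\int_0^t m(x)x^{-1}\,{\rm d}x$, which is exactly the paper's key tool, with your reformulation of $(\condooOmega)$ as a little-$o$ statement for differences of $\omega_M$ matching the paper's condition. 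The only piece you leave open, the ``bookkeeping'' in (v), is handled in the paper by two short integral estimates (taking $H=1$, $L=K/e$ in one direction and substituting $x\mapsto Cx$ in the other), so your approach goes through as intended.
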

\begin{proof}
$(i)$ Since $\omega_M$ is increasing, we have that for all $H > 0$ and $x,y \in \R^d$
$$
\omega_M(H(x+y)) \leq \omega_M(2H\max\{ |x|, |y|\}) \leq \omega_M(2Hx) + \omega_M(2Hy).
$$
This implies that $\V_M$ satisfies $[\condM]$.

$(ii)$ This is shown in  \cite[Lemma 3.3]{D-N-V-NuclGSKerThm}.

$(iii)$ This follows from \cite[Proposition 3.6]{K-Ultradistributions1}.

$(iv)$ For all $H > 0$ and $\theta \in (0,1)$ it holds that
\begin{align*}
\omega_M(Ht) 
&= \sup_{p\in \N} \left ( \theta \log \left(\frac{t^p M_0}{M_p} \right) + (1-\theta) \log \left ( \frac{(H^{1/(1-\theta)}t)^p M_0}{M_p} \right) \right) \\
&\leq \theta \omega_M(t) + (1-\theta)\omega_M(H^{1/(1-\theta)}t), 
\end{align*}
for all $t \geq 0$. This shows that $\V_M$  satisfies $(\condDN)$.

$(v)$ We denote by $m$ the counting  function of the sequence $(m_p)_{p \in \Z_+}$, i.e., 
$$
m(x) = \sum_{m_p \leq x} 1, \qquad x \geq 0. 
$$
Then, \cite[Equation (3.11)]{K-Ultradistributions1}
$$
\omega_M(t) = \int_{0}^{t} \frac{m(x)}{x} \dx, \qquad t \geq 0.
$$
Hence, $\V_M$ satisfies $(\condooOmega)$ if and only if 
\begin{equation}
\forall H > 0 ~ \exists  K < H ~  \forall L \leq K ~  : ~  \int_{L t}^{K t} \frac{m(x)}{x} \dx = o \left ( \int^{Ht}_{L t} \frac{m(x)}{x} \dx \right),
\label{omega-new}
\end{equation}
while \eqref{Omega-seq} holds if and only if
\begin{equation}
\exists C > 1 ~  : ~  m(x) = o  (m(Cx)).
\label{cond-m}
\end{equation}
We now show that \eqref{omega-new} and \eqref{cond-m} are equivalent. First assume that \eqref{omega-new} holds. Let $\varepsilon > 0$ be arbitrary. Condition \eqref{omega-new} with $H = 1$ and  $L = K/e$ implies that for $t$ large enough
$$
m (K t/e ) \leq \int^{K t}_{K t / e} \frac{m(x)}{x} \dx \leq \varepsilon \int^{t}_{K t / e} \frac{m(x)}{x} \dx \leq \varepsilon \log( e/K) m(t),
$$
whence \eqref{cond-m} holds (with $C = e/K$). Conversely, assume that \eqref{cond-m} holds. Let $H > 0$ be arbitrary and set $K = H/C$. Fix $L \leq K$. Let $\varepsilon > 0$ be arbitrary. Condition  \eqref{cond-m} implies that for all $t$ large enough
$$
\int_{L t}^{K t} \frac{m(x)}{x} \dx  \leq \varepsilon \int_{L t}^{K t} \frac{m(Cx)}{x} \dx = \varepsilon \int_{L C t}^{Ht} \frac{m(x)}{x} \dx \leq  \varepsilon \int_{L t}^{Ht} \frac{m(x)}{x} \dx,
$$
whence \eqref{omega-new} holds.
\end{proof}
\begin{example} \label{example} 
$(i)$ Consider the weight sequence $M = ((\log (p+e))^{sp})_{p \in \N}$, $s > 0$. Since $M$ is log-convex and satisfies $(M.2)$, we have that $0 < \sup_{p \in \N}m_p/M^{1/p}_p < \infty$. Hence, there is $C> 0$ such that $C^{-1}(\log (p+e))^{s} \leq m_p \leq C(\log (p+e))^{s}$ for all $p \in \N$. This implies that $M$ satisfies  \eqref{Omega-seq}.

$(ii)$ A simple induction argument shows that $(M.2)^*$  yields that
$$
\forall C > 0 ~  \exists N \in \Z_+ ~   \exists p_0 \in \Z_+ ~  \forall p \geq p_0 ~  : ~   Cm_{p} \leq m_{Np}.
$$
Therefore, any weight sequence satisfying $(M.2)^*$ does not satisfy \eqref{Omega-seq}. In particular, the Gevrey sequence $p!^s$, $s>0$, does not satisfy \eqref{Omega-seq}.
\end{example}
\section{Gelfand-Shilov spaces and time-frequency analysis}\label{sect:GS}
Let $\omega$ be a weight function. For $h > 0$  and a continuous function $v: \R^d \to (0,\infty)$ we define
$\mathcal{S}^{\omega,h}_{v}$ as the Banach space consisting of all $\varphi \in C^{\infty}(\R^{d})$ such that
	\[ \norm{\varphi}_{\mathcal{S}^{\omega,h}_{v} } := \sup_{\alpha \in \N^{d}} \sup_{ x \in \R^{d}} |\varphi^{(\alpha)}(x)|v(x)\exp \left(-\frac{1}{h}\phi^*(h|\alpha|)\right) < \infty . \] 
Let $\V$ be a weight system (on $\R^d$). We define the \emph{Gelfand-Shilov spaces of Beurling and Roumieu type} as
	\[ \mathcal{S}^{(\omega)}_{(\V)}  := \varprojlim_{h \rightarrow 0^{+}} \mathcal{S}^{\omega,h}_{v_h} , \qquad \mathcal{S}^{\{\omega\}}_{\{\V\}} := \varinjlim_{h \rightarrow \infty} \mathcal{S}^{\omega,h}_{v_h} . \]
Then, $\mathcal{S}^{(\omega)}_{(\V)}$ is a Fr\'{e}chet space and $\mathcal{S}^{\{\omega\}}_{\{\V\}}$ is an $(LB)$-space.  Following Notation \ref{notation}, we employ $\mathcal{S}^{[\omega]}_{[\V]}$ as a common notation for $\mathcal{S}^{(\omega)}_{(\V)}$ and $\mathcal{S}^{\{\omega\}}_{\{\V\}}$.  If $\V$ satisfies $[\condwM]$, then $\mathcal{S}^{[\omega]}_{[\V]}$ is translation-invariant. If $\V$ satisfies $[\condN]$, then $\mathcal{S}^{[\omega]}_{[\V]} \subset L^1 \cap L^\infty \subset L^p$ for all $p \in [1,\infty]$.  We refer to \cite{D-N-V-NuclGSKerThm} for more information on $\mathcal{S}^{[\omega]}_{[\V]}$. Given another weight function $\eta$, we write $\mathcal{S}^{[\omega]}_{[\eta]} := \mathcal{S}^{[\omega]}_{[\V_\eta]}$.

Let $M$ and $A$ be two weight sequences.  For $h > 0$ we define $\mathcal{S}^{M,h}_{A,h}$ as the Banach space consisting of all $\varphi \in C^{\infty}(\R^{d})$ such that
	\[ \norm{\varphi}_{\mathcal{S}^{M,h}_{A,h} } := \sup_{\alpha,\beta \in \N^{d}} \sup_{ x \in \R^{d}} \frac{|x^\beta\varphi^{(\alpha)}(x)|}{h^{|\alpha|+ |\beta|}M_{|\alpha|} A_{|\beta|}}  < \infty . \]
We define
	\[ \mathcal{S}^{(M)}_{(A)} := \varprojlim_{h \rightarrow 0^{+}} \mathcal{S}^{M,h}_{A,h} , \qquad \mathcal{S}^{\{M\}}_{\{A\}} := \varinjlim_{h \rightarrow \infty} \mathcal{S}^{M,h}_{A,h} . \]
Then, $\mathcal{S}^{(M)}_{(A)}$ is a Fr\'{e}chet space and $\mathcal{S}^{\{M\}}_{\{A\}}$ is an $(LB)$-space. 

\begin{lemma}\label{GS-red}
Let $M$ and $A$ be two weight sequences. Suppose that $M$ satisfies $(M.2)$ and $(M.2)^*$. Then,
$\mathcal{S}^{[M]}_{[A]} = \mathcal{S}^{[\omega_M]}_{[\V_A]}$ as locally convex spaces. 
\end{lemma}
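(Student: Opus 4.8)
The plan is to prove the identity $\mathcal{S}^{[M]}_{[A]} = \mathcal{S}^{[\omega_M]}_{[\V_A]}$ by comparing the two families of defining norms and showing that each Banach space in one spectrum is continuously included in a suitable Banach space of the other, with a cofinal matching of parameters. The key point is that the derivative part and the weight part of the seminorms decouple, so the argument splits into two independent translations: on the derivative side, replace the factor $h^{|\alpha|} M_{|\alpha|}$ by $\exp\left(\frac{1}{k}\phi_M^*(k|\alpha|)\right)$; on the weight side, replace the polynomial-type control $\sup_\beta |x^\beta \varphi^{(\alpha)}(x)| / (h^{|\beta|} A_{|\beta|})$ by the weighted sup-norm $|\varphi^{(\alpha)}(x)| \, e^{\omega_A(x/\lambda)}$. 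For the first translation I would invoke Lemma~\ref{p:M2*equivBMT} (items $(i)$--$(iv)$), which is exactly the statement that, under $(M.2)$ and $(M.2)^*$ on $M$, the quantities $h^p M_p$ and $\exp\left(\frac{1}{k}\phi_M^*(kp)\right)$ are interchangeable up to constants, with the quantifiers on $h$ and $k$ arranged correctly for both the Beurling (projective) and Roumieu (inductive) cases. For the second translation I would use the elementary fact that
\[
\sup_{\beta \in \N^d} \frac{|x^\beta|}{h^{|\beta|} A_{|\beta|}} \quad \text{and} \quad \exp\left(\omega_A(x/\lambda)\right)
\]
are equivalent up to constants and a rescaling of the parameter, which follows from the definition of the associated function $\omega_A$ (here one uses that $\sup_p t^p M_0 / M_p = e^{\omega_M(t)}$, applied coordinatewise together with $|x|^{|\beta|} \leq d^{|\beta|/2} \max_j |x_j|^{|\beta|}$ or a similar crude estimate, plus $A_{|\beta|} \leq \prod_j A_{\beta_j}$-type inequalities that hold up to an $H^{|\beta|}$ factor under $(M.2)'$, which is implied by $(M.2)$).

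Concretely, in the Beurling case I would fix $h > 0$ and first choose, via Lemma~\ref{p:M2*equivBMT}$(iii)$ resp.\ $(iv)$, a parameter $k$ so that $\exp\left(\frac{1}{k}\phi_M^*(k|\alpha|)\right) \leq C h^{|\alpha|} M_{|\alpha|}$ (or the reverse), and separately choose $\lambda$ comparable to $h$ so that $e^{\omega_A(x/\lambda)}$ dominates (or is dominated by) $\sup_\beta |x^\beta|/(h^{|\beta|} A_{|\beta|})$ up to a constant. Combining these gives continuous inclusions $\mathcal{S}^{M,h}_{A,h} \hookrightarrow \mathcal{S}^{\omega_M, k}_{e^{\omega_A(\cdot/\lambda)}}$ and, running the quantifiers the other way, an inclusion in the opposite direction for a different triple of parameters; since $h \to 0^+$ is cofinal in both spectra, the projective limits coincide as locally convex spaces. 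The Roumieu case is identical with the arrows reversed and $h \to \infty$ cofinal in the inductive limit; here one uses items $(i)$ and $(ii)$ of Lemma~\ref{p:M2*equivBMT} instead. A small bookkeeping point is that $\V_A = \{ e^{\omega_A(\cdot/\lambda)} \mid \lambda \in (0,\infty) \}$ is indexed so that larger $\lambda$ gives a smaller weight, matching the convention $v_\lambda \geq v_\mu$ for $\lambda \leq \mu$; one must check the monotonicity directions line up, but this is immediate since $\omega_A$ is increasing.

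The main obstacle I anticipate is not conceptual but bookkeeping: tracking the three coupled parameters $(h, k, \lambda)$ through the nested quantifier alternations so that the Beurling "for all $h$, there exists $\ldots$" and the Roumieu "there exists $h$, for all $\ldots$" both come out right, while simultaneously handling the multivariate-to-univariate passage for the monomials $x^\beta$ (the associated function $\omega_A$ is defined via the one-dimensional powers $t^p$, so one needs the standard but slightly fiddly comparison between $\sup_{\beta} |x^\beta|/(H^{|\beta|} A_{|\beta|})$ and $e^{\omega_A(c|x|)}$ in $\R^d$, which costs a dimensional constant and a rescaling absorbed into $(M.2)'$). Both of these are routine given the tools already assembled in Sections~\ref{sect-wf} and~\ref{sect-ws}, in particular Lemmas~\ref{M12} and~\ref{p:M2*equivBMT}, so I would present the Beurling case in full and remark that the Roumieu case follows by the analogous argument.
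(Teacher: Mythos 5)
Your overall strategy is the paper's: decouple the derivative and weight parts of the norms, translate $h^{|\alpha|}M_{|\alpha|}$ into $\exp\bigl(\tfrac{1}{k}\phi_M^*(k|\alpha|)\bigr)$ via Lemma \ref{p:M2*equivBMT}, and compare $\sup_{\beta}|x^\beta|/(h^{|\beta|}A_{|\beta|})$ with $e^{\omega_A(|x|/\lambda)}$. The gap is in how you justify the weight-side comparison. You propose to go coordinatewise (writing the supremum over $\beta$ as a product of one-dimensional suprema) and then to trade $\prod_j A_{\beta_j}$ for $A_{|\beta|}$, invoking ``$A_{|\beta|}\leq \prod_j A_{\beta_j}$ up to an $H^{|\beta|}$ factor under $(M.2)'$''. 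Two problems: first, that inequality is condition $(M.2)$, not $(M.2)'$ --- from $(M.2)'$ alone one only gets $A_{p+1}\leq CH^{p+1}A_p$, which does not yield $A_{p+q}\leq CH^{p+q}A_pA_q$; second, and more seriously, the lemma imposes $(M.2)$ and $(M.2)^*$ only on $M$, while $A$ is an arbitrary weight sequence, so no such inequality for $A$ is available. As written, your argument proves the statement only under an additional hypothesis on $A$, which is strictly weaker than the lemma (and would weaken its later applications).

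The repair is to avoid the product decomposition entirely; this is exactly what the paper does. For every weight sequence $A$ and every $h>0$ one has, up to the harmless constant $\log A_0$,
\[
\omega_A\Bigl(\frac{|x|}{\sqrt{d}\,h}\Bigr)\;\leq\;\sup_{\beta\in\N^d}\log\frac{|x^\beta|}{h^{|\beta|}A_{|\beta|}}\;\leq\;\omega_A\Bigl(\frac{|x|}{h}\Bigr),\qquad x\in\R^d,
\]
where the right-hand inequality follows from $|x^\beta|\leq |x|^{|\beta|}$, and the left-hand one by restricting the supremum to the multi-indices $\beta=p\,e_{j_0}$, $p\in\N$, with $j_0$ a coordinate such that $|x_{j_0}|=\max_j|x_j|\geq |x|/\sqrt{d}$. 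No condition on $A$ enters anywhere. With this two-sided estimate in place, your parameter bookkeeping between $h$, $k$ and $\lambda$ via Lemma \ref{p:M2*equivBMT} (items $(i)$--$(iv)$, matching the Beurling and Roumieu quantifiers) goes through exactly as you describe and yields the continuous inclusions in both directions.
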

\begin{proof}
This follows from Lemma \ref{p:M2*equivBMT} and the fact that for all $h > 0$
$$
\omega_A \left( \frac{1}{\sqrt{d}h} |x| \right) \leq \sup_{\beta \in \N^d} \log \frac{|x^\beta|}{h^{|\beta|}A_{|\beta|}} \leq \omega_A \left( \frac{1}{h} |x| \right), \qquad x \in \R^d.
$$
\end{proof}
Let $r,s > 0$. We write
 $$
 \Sigma^r_s := \mathcal{S}^{(p!^r)}_{(p!^s)} = \mathcal{S}^{(t^{1/r})}_{(t^{1/s})}  , \qquad \mathcal{S}^r_s := \mathcal{S}^{\{p!^r\}}_{\{p!^s\}} = \mathcal{S}^{\{t^{1/r}\}}_{\{ t^{1/s}\}}, \qquad
 $$
 for the classical Gelfand-Shilov spaces \cite{G-S-GenFunc2}. In particular, $\Sigma^1_1$ is the test function space of the Fourier ultrahyperfunctions \cite{Zharinov} and $\mathcal{S}^1_1$ is the test function space of the Fourier hyperfunctions \cite{Kawai}.
 
 \begin{remark}\label{non-triviality}
The space $\Sigma^r_s$ ($\mathcal{S}^r_s$) is non-trivial if and only if $r+s > 1$ ($r+s \geq 1$) (cf.\ \cite[Section 8]{G-S-GenFunc2}).
Consequently, given a weight function $\omega$ and a weight system $\V$, we have that  $\mathcal{S}^{[\omega]}_{[\V]} \neq \{0\}$ if  $\omega(t) = O(t^{1/r})$ and 
$$
\forall \lambda ~   \exists h  ~  ( \forall h ~  \exists \lambda) ~  : ~  v_\lambda(x) = O(e^{h|x|^{1/s}})
$$
for some $r+s > 1$ ($r+s \geq 1$), as these conditions imply that $\Sigma^r_s \subseteq \mathcal{S}^{(\omega)}_{(\V)}$ ($\mathcal{S}^r_s \subseteq \mathcal{S}^{\{\omega\}}_{\{\V\}}$). In particular, if $\eta$ is another weight function, $\mathcal{S}^{[\omega]}_{[\eta]} \neq \{0\}$ if  $\omega(t) = O(t^{1/r})$  and $\eta(t) = O(t^{1/s})$ for some $r+s > 1$ ($r+s \geq 1$). Similarly, given two weight sequences $M$ and $A$, $\mathcal{S}^{[M]}_{[A]} \neq \{ 0\}$ if $p!^r \subset M$ and $p!^s \subset A$  for some $r+s > 1$ ($r+s \geq 1$).  In \cite[Proposition 2.7, Proposition 4.3 and Theorem 5.9]{D-V2018} Vindas and the first author showed that  $\mathcal{S}^{[p!]}_{[A]} \neq \{0\}$ if and only if $(\log (p+e))^p \prec A$ ($(\log (p+e))^p \subset A$). In general, the characterization of the non-triviality of the spaces $\mathcal{S}^{[\omega]}_{[\eta]}$ and $\mathcal{S}^{[M]}_{[A]}$ seems to be an open problem. 
\end{remark}
Next, we introduce some tools from time-frequency analysis; see the book \cite{G-FoundationsTimeFreqAnal} for more information. The translation and modulation operators are denoted by $T_{x} f(t) = f(t - x)$ and $M_{\xi} f(t) = e^{2 \pi i \xi \cdot t} f(t)$, for $x, \xi \in \R^{d}$. 
The \emph{short-time Fourier transform (STFT)} of $f \in L^{2}(\R^{d})$ with respect to a window  $\psi \in L^{2}(\R^{d})$ is defined as 
	\[ V_{\psi} f(x, \xi) = (f, M_{\xi} T_{x} \psi)_{L^{2}} = \int_{\R^{d}} f(t) \overline{\psi(t - x)} e^{- 2 \pi i \xi \cdot t} dt , \qquad (x, \xi) \in \R^{2d} .  \]
We have that $\|V_{\psi} f\|_{L^2} = \|\psi \|_{L^2} \|f \|_{L^2}$. In particular, $V_\psi : L^{2}(\R^d) \rightarrow L^2(\R^{2d})$ is continuous. The adjoint of $V_{\psi}$ is given by the weak integral 
	\[ V_{\psi}^{*} F = \int \int_{\R^{2d}} F(x, \xi) M_{\xi} T_{x} \psi dx d\xi , \qquad F \in L^{2}(\R^{2d}) . \]
If  $\gamma \in L^{2}(\R^{d})$ is such that $(\gamma, \psi)_{L^{2}} \neq 0$, then
	\begin{equation}
		\label{eq:reconstructSTFTL2} 
		\frac{1}{(\gamma, \psi)_{L^{2}}} V^{*}_{\gamma} \circ V_{\psi} = \id_{L^{2}(\R^{d})} . 
	\end{equation}
The above reconstruction formula is the basis for the proof of part $(a)$ of Theorem \ref{t:ZSpTop} below.
	
Next, we consider Gabor frames.  Given a window  $\psi \in L^{2}(\R^{d})$ and lattice parameters $a, b > 0$, the set of time-frequency shifts
	\[ \mathcal{G}(\psi, a, b) := \{ M_{n} T_{k} \psi : (k,n) \in a\Z^{d} \times b \Z^d \} \]
is called a \emph{Gabor frame} for $L^{2}(\R^{d})$ if there exist $A, B > 0$ such that
	\[ A \norm{f}^{2}_{L^{2}} \leq \sum_{ (k,n) \in a\Z^{d} \times b \Z^d } \left| V_{\psi} f(k, n) \right|^{2} \leq B \norm{f}^{2}_{L^{2}} , \qquad f  \in L^{2}(\R^{d}). \]
We define the Wiener space $W$ as the space consisting of all $\psi \in L^\infty(\R^d)$ such that
$$
\sum_{n \in \Z^d} \| T_n \psi \|_{L^\infty([0,1]^d)} < \infty.
$$
Given a weight function $\omega$ and a weight system $\V$ satisfying $[\condwM]$ and $[\condN]$, we have that $\mathcal{S}^{[\omega]}_{[\V]} \subset W$. This follows from $[\condwM]$ and the fact that for some $\mu > 0$ (for all $\mu > 0$) we have that
$$
\sum_{n \in \Z^d} \frac{1}{v_\mu(n)} < \infty
$$
(the latter is a consequence of \eqref{discrete-l1} and the fact that $v_\lambda \geq 1$ for all $\lambda > 0$). Let $\psi \in W$. Then, the \emph{analysis operator}
$$
C_\psi = C^{a,b}_\psi:  L^2(\R^d) \rightarrow l^2( a\Z^{d} \times b \Z^d), ~  f \mapsto (V_\psi f(k,n))_{ (k,n) \in a\Z^{d} \times b \Z^d},
$$
and the \emph{synthesis operator}
$$
D_\psi = D^{a,b}_\psi: l^2(a\Z^{d} \times b \Z^d) \rightarrow L^2(\R^d), ~  (c_{k,n})_{ (k,n) \in a\Z^{d} \times b \Z^d} \mapsto \sum_{ (k,n) \in a\Z^{d} \times b \Z^d} c_{k,n} M_{n} T_{k} \psi
$$
are continuous \cite[Proposition 6.2.2 and Corollary 6.2.3]{G-FoundationsTimeFreqAnal}. Given another window $\gamma \in W$, we define
$$
S_{\psi,\gamma} := D_\gamma \circ C_\psi: L^2(\R^d) \rightarrow L^2(\R^d).
$$
We call  \emph{$(\psi,\gamma)$ a pair of dual windows  (on $a\Z^d \times b\Z^d$)} if $S_{\psi,\gamma} = \operatorname{id}_{L^2(\R^d)}$. In such a case, also $S_{\gamma, \psi} = \operatorname{id}_{L^2(\R^d)}$ and both $\mathcal{G}(\psi, a, b)$ and $\mathcal{G}(\gamma, a, b)$ are Gabor frames.  
Pairs of dual windows  are characterized by the Wexler-Raz biorthogonality relations:
\begin{theorem}\cite[Theorem 7.3.1 and the subsequent remark]{G-FoundationsTimeFreqAnal}
		\label{l:WZBiOrthRel}
		Let $\psi, \gamma \in W$ and let $a,b > 0$. Then,  $(\psi,\gamma)$  is a pair of dual windows on $a\Z^d \times b\Z^d$ if and only if
		$$
		( M_{n} T_{k}\psi, M_{n'} T_{k'} \gamma)_{L^{2}} = (ab)^{d} \delta_{k, k'} \delta_{n,n' }, \qquad (k, n), (k,' n') \in \frac{1}{a}\Z^d \times  \frac{1}{b}\Z^d,
$$
or, equivalently,
\begin{equation}
\frac{1}{(ab)^{d}}C^{ 1/a, 1/b}_\psi \circ D^{1/a,1/b}_\gamma =  \operatorname{id}_{l^2\left( \frac{1}{a}\Z^d \times  \frac{1}{b}\Z^d \right)}.
		\label{WR}
\end{equation}
\end{theorem}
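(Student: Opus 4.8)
The plan is to reduce the statement to the \emph{Janssen representation} of the operator $S_{\psi,\gamma}=D^{a,b}_\gamma\circ C^{a,b}_\psi$, obtained via the Fundamental Identity of Gabor Analysis. Write $\pi(k,n)=M_nT_k$, put $\Lambda=a\Z^d\times b\Z^d$, and let $\Lambda^\circ$ denote the adjoint lattice, which in the present normalisation is the lattice $\tfrac1a\Z^d\times\tfrac1b\Z^d$ appearing in the statement. I would first record the routine facts. For $\psi,\gamma\in W$ the systems $\mathcal{G}(\psi,a,b)$, $\mathcal{G}(\gamma,a,b)$ are Bessel sequences by \cite[Cor.\ 6.2.3]{G-FoundationsTimeFreqAnal}, so the Cauchy--Schwarz inequality gives $\sum_{\lambda\in\Lambda}\bigl|\langle f,\pi(\lambda)\psi\rangle\langle\pi(\lambda)\gamma,g\rangle\bigr|<\infty$ for all $f,g\in L^2(\R^d)$; hence $\langle S_{\psi,\gamma}f,g\rangle=\sum_{\lambda\in\Lambda}\langle f,\pi(\lambda)\psi\rangle\langle\pi(\lambda)\gamma,g\rangle$ is an absolutely convergent double series, $S_{\psi,\gamma}$ is bounded on $L^2(\R^d)$, and a direct computation shows $S_{\psi,\gamma}^{*}=S_{\gamma,\psi}$.

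The crucial step is the Fundamental Identity of Gabor Analysis: for $f,g,\psi,\gamma$ in a dense, sufficiently regular subspace,
\[
\sum_{\lambda\in\Lambda}\langle f,\pi(\lambda)\psi\rangle\langle\pi(\lambda)\gamma,g\rangle
=\frac1{(ab)^d}\sum_{\mu\in\Lambda^\circ}\langle\gamma,\pi(\mu)\psi\rangle\langle\pi(\mu)f,g\rangle .
\]
I would prove this by applying the Poisson summation formula on $\R^{2d}$ to the Wiener-amalgam function $z\mapsto V_\psi f(z)\,\overline{V_\gamma g(z)}$, and then using the covariance of the short-time Fourier transform under the symplectic Fourier transform, which identifies $\mathcal F_\sigma\bigl(V_\psi f\cdot\overline{V_\gamma g}\bigr)$ with a product of the form $V_\gamma\psi\cdot\overline{V_g f}$ up to a symplectic rotation of the variable; evaluating the transformed series over the dual lattice and re-indexing yields precisely the sum over $\Lambda^\circ$, with normalisation $(ab)^{-d}=\operatorname{vol}(\R^{2d}/\Lambda)^{-1}$. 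Since $\psi,\gamma\in W$ forces $V_\psi\gamma\in W(\R^{2d})$, the sequence $\bigl(\langle\gamma,\pi(\mu)\psi\rangle\bigr)_{\mu\in\Lambda^\circ}$ lies in $\ell^1(\Lambda^\circ)$, so both sides extend by density and continuity to all $f,g\in L^2(\R^d)$. Reading the resulting identity as an operator equation produces the Janssen representation
\[
S_{\psi,\gamma}=\frac1{(ab)^d}\sum_{\mu\in\Lambda^\circ}\langle\gamma,\pi(\mu)\psi\rangle\,\pi(\mu),
\]
the series converging absolutely in operator norm.

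The theorem then follows quickly. Using $\langle M_nT_k\psi,M_{n'}T_{k'}\gamma\rangle=\langle\pi\bigl((k,n)-(k',n')\bigr)\psi,\gamma\rangle$ up to a unimodular factor, the Wexler--Raz relations are equivalent to $\langle\gamma,\pi(\mu)\psi\rangle=(ab)^d\delta_{\mu,0}$ for $\mu\in\Lambda^\circ$; if this holds the Janssen representation collapses to $S_{\psi,\gamma}=\pi(0)=\id_{L^2(\R^d)}$. Conversely, if $S_{\psi,\gamma}=\id$, I would compare coefficients, using that the expansion $\sum_{\mu\in\Lambda^\circ}c_\mu\pi(\mu)$ with $c\in\ell^1(\Lambda^\circ)$ is unique: if $\sum_\mu c_\mu\pi(\mu)=0$, applying it to the Gaussian $g_0$ and composing with $V_{g_0}$ gives $\sum_\mu c_\mu e^{i\theta_\mu(z)}(V_{g_0}g_0)(z-\mu)=0$ for all $z\in\R^{2d}$, and unimodular translates of the nowhere-vanishing Gaussian $V_{g_0}g_0$ along the lattice $\Lambda^\circ$ are $\ell^1$-independent; hence $c_\mu=(ab)^{-d}\langle\gamma,\pi(\mu)\psi\rangle=\delta_{\mu,0}$, which is the biorthogonality. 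The equivalent formulation $\tfrac1{(ab)^d}C^{1/a,1/b}_\psi\circ D^{1/a,1/b}_\gamma=\id_{\ell^2(\Lambda^\circ)}$ results from writing out the matrix of $C^{1/a,1/b}_\psi\circ D^{1/a,1/b}_\gamma$, whose $((k,n),(k',n'))$ entry equals $\langle M_{n'}T_{k'}\gamma,M_nT_k\psi\rangle$. For the subsequent remark: $S_{\psi,\gamma}^{*}=S_{\gamma,\psi}$ gives $S_{\gamma,\psi}=\id$ as well; and from $D^{a,b}_\gamma C^{a,b}_\psi=\id$ one gets $\|f\|_{L^2}\le\|D^{a,b}_\gamma\|\,\|C^{a,b}_\psi f\|_{\ell^2}$, so $C^{a,b}_\psi$, and by symmetry $C^{a,b}_\gamma$, is bounded below, i.e.\ both $\mathcal{G}(\psi,a,b)$ and $\mathcal{G}(\gamma,a,b)$ are Gabor frames.

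The main obstacle is the rigorous proof of the Fundamental Identity: justifying the Poisson summation for the window class at hand (controlling the amalgam-space membership of $V_\psi f\cdot\overline{V_\gamma g}$ and of its Fourier transform, so that the summation formula is valid pointwise), carrying out the symplectic-covariance computation with the correct constant, and extending from the dense subspace to arbitrary $f,g\in L^2(\R^d)$ by the uniform Bessel bounds. The $\ell^1$-independence of the Gaussian time-frequency shifts along $\Lambda^\circ$, needed for the converse implication, is a second delicate point that is best isolated as a separate lemma.
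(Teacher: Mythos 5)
This statement is not proved in the paper at all: it is imported verbatim from Gr\"ochenig's book \cite[Theorem 7.3.1 and the subsequent remark]{G-FoundationsTimeFreqAnal}, and your sketch reconstructs essentially the argument given there, namely the Janssen representation of $S_{\psi,\gamma}$ over the adjoint lattice $\frac{1}{a}\Z^d\times\frac{1}{b}\Z^d$ obtained from the Fundamental Identity of Gabor Analysis, followed by uniqueness of the $\ell^1$ coefficients in an expansion into time-frequency shifts. The two delicate points you single out (justifying Poisson summation for windows in $W$, and the $\ell^1$-independence of time-frequency shifts of the Gaussian needed for the converse) are exactly the auxiliary lemmas supplied in that reference, so your outline is correct and follows the same route as the cited proof.
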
		
The proof of part $(b)$ of Theorem  \ref{t:ZSpTop} below is based on the formula \eqref{WR}. For it to be applicable in our context we need that, given a weight function $\omega$ and a weight system $\V$, $\psi \in \mathcal{S}^{[\omega]}_{[\V]}$ and $\gamma \in \mathcal{S}^{[\omega]}_{[\widetilde \V]}$. Hence, we introduce the following general notion:

	\begin{definition}
		Let $\omega$ be  a weight function and let $\V$ be a  weight system. The space  $\mathcal{S}^{[\omega]}_{[\V]}$ is called \emph{Gabor accessible} if there exist $\psi \in \mathcal{S}^{[\omega]}_{[\V]}$, $\gamma \in \mathcal{S}^{[\omega]}_{[\widetilde \V]}$ and $a, b > 0$ such that $(\psi,\gamma)$  is a pair of dual windows on $a\Z^d \times b\Z^d$.
	\end{definition}
The regularity and decay properties of pairs of dual windows is a well-studied topic in time-frequency analysis; see \cite[Chapter 13]{G-FoundationsTimeFreqAnal} and the references therein. We now use such results to give growth conditions on  $\omega$ and $\V$ which ensure that  $\mathcal{S}^{[\omega]}_{[\V]}$ is Gabor accessible.
\begin{proposition}\label{GA-1} Let $\omega$ be  a weight function and let $\V$ be a  weight system. Then, $\mathcal{S}^{[\omega]}_{[\V]}$ is Gabor accessible if one of the following two conditions is satisfied:
\begin{itemize}
\item[$(i)$] $\omega$ is non-quasianalytic.
\item[$(ii)$] $\omega(t) = o(t^2)$ and $\forall \lambda ~ \forall h ~: ~ v_\lambda(x) = O(e^{h|x|^2})$   ($\omega(t) = O(t^2)$ and $\forall h ~\exists \lambda ~: ~ v_\lambda(x) = O(e^{h|x|^2})$).
\end{itemize}
\end{proposition}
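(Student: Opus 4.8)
The plan is to construct, in each of the two cases, an explicit pair of dual windows $(\psi,\gamma)$ with $\psi \in \mathcal{S}^{[\omega]}_{[\V]}$ and $\gamma \in \mathcal{S}^{[\omega]}_{[\check\V]}$, using the known regularity theory for Gabor duals (\cite[Chapter 13]{G-FoundationsTimeFreqAnal}). The natural starting point is to choose $\psi$ itself in $\mathcal{S}^{[\omega]}_{[\V]}$ with a suitable decay/smoothness profile, and then invoke the fact that for lattice parameters $a,b$ with $ab<1$ small enough the canonical dual window $\gamma^\circ = S_{\psi,\psi}^{-1}\psi$ inherits the decay and smoothness of $\psi$. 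Concretely, one takes $\psi$ to be a fixed nonzero element of the \emph{smaller}, symmetric Gelfand-Shilov space $\Sigma^r_s$ (or $\mathcal{S}^r_s$) for an appropriate pair $r,s$ with $r+s>1$ (resp.\ $\geq 1$); such $\psi$ automatically lies in $\mathcal{S}^{[\omega]}_{[\V]} \cap \mathcal{S}^{[\omega]}_{[\check\V]}$ by the embeddings in Remark \ref{non-triviality}, once the growth hypotheses on $\omega$ and $\V$ are in force, and its canonical dual stays in the same symmetric space.

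In case $(i)$, when $\omega$ is non-quasianalytic, one can do better and work with compactly supported windows. Choose $\psi \in \mathcal{D}^{(\omega)}(\R^d)$ (the Beurling-type ultradifferentiable functions of class $\omega$ with compact support) — this space is nontrivial precisely because $\omega$ is non-quasianalytic — supported in a small cube, normalized so that $\sum_{k\in a\Z^d}|\psi(x-k)|^2 \equiv 1$ for suitable $a>0$ (a partition-of-unity type construction). For $b>0$ small enough (so that $b^{-1}$ exceeds the diameter of $\mathrm{supp}\,\psi$), the general theory gives that $\gamma = \psi$ itself, up to the normalization constant, is a dual window: the frame operator $S_{\psi,\psi}$ is just multiplication by a periodization that we arranged to be constant, hence $\gamma := (ab)^{-d}\psi$ (or a similarly renormalized copy) satisfies $S_{\psi,\gamma}=\mathrm{id}$. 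Since $\psi$ has compact support and is of class $(\omega)$, it lies in $\mathcal{S}^{(\omega)}_{(\V)}$ regardless of $\V$ (weights only matter at infinity), and likewise $\gamma=\check\psi$-type reflections lie in $\mathcal{S}^{(\omega)}_{(\check\V)}$; the Roumieu case is handled identically with $\mathcal{D}^{\{\omega\}}$. Thus $\mathcal{S}^{[\omega]}_{[\V]}$ is Gabor accessible.

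In case $(ii)$, the hypotheses $\omega(t)=o(t^2)$ and $v_\lambda(x) = O(e^{h|x|^2})$ are exactly what is needed so that the Gaussian $\psi(x) = e^{-\pi |x|^2}$ — or more precisely a suitably dilated Gaussian — belongs to $\mathcal{S}^{[\omega]}_{[\V]}$: one checks the defining seminorms, using that Hermite-type derivative bounds for the Gaussian grow like $\exp(c|\alpha|\log|\alpha|)$, which is dominated by $\exp(\frac1h\phi^*(h|\alpha|))$ once $\omega(t)=o(t^2)$ (equivalently, $\phi^*$ grows faster than the Gevrey-$1/2$ profile $y\log y$), while the spatial side $e^{-\pi\delta|x|^2}v_\lambda(x)$ is bounded because $v_\lambda(x)=O(e^{h|x|^2})$ for every (some) $h$, and we may take $\delta$ large. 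The Gaussian is its own natural window, and by Lyubarskii's and Seip's theorem the Gabor system $\mathcal{G}(\psi,a,b)$ is a frame for every $ab<1$; moreover the canonical dual window $\gamma^\circ$ of a Gaussian again belongs to $\Sigma^{1/2}_{1/2}$ (this is a theorem of Janssen, and also follows from the general Gröchenig--Leinert result on inverse-closedness of the Gelfand-Shilov class under the frame operator), hence to $\mathcal{S}^{[\omega]}_{[\V]}\cap\mathcal{S}^{[\omega]}_{[\check\V]}$ under our hypotheses. Since the Gaussian is even, $\check\V$ versus $\V$ causes no trouble.

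The main obstacle, and the step that must be handled with care rather than by citation, is verifying membership of the chosen window in the \emph{Banach-space} seminorms $\norm{\cdot}_{\mathcal{S}^{\omega,h}_{v_h}}$ for the correct quantifier pattern on $h$ and $\lambda$ (Beurling: all $h$, all $\lambda$; Roumieu: some $h$, some $\lambda$) — that is, matching the two-sided Young-conjugate estimates from Lemma \ref{M12} and Lemma \ref{p:M2*equivBMT} against the concrete growth of the Gaussian's (or the compactly supported bump's) derivatives, and simultaneously against the admissible growth $O(e^{h|x|^2})$ of the weights. A secondary point requiring attention is that the dual-window regularity theorems in \cite{G-FoundationsTimeFreqAnal, G-S-GenFunc2} are usually stated for isotropic Gelfand-Shilov classes $\Sigma^r_s$, so one first embeds a symmetric $\Sigma^r_s$ into $\mathcal{S}^{[\omega]}_{[\V]}$ via Remark \ref{non-triviality}, produces the dual pair \emph{inside} $\Sigma^r_s\times\Sigma^r_s$, and only then transports it up; one must check that the resulting $(\psi,\gamma)$ still satisfies $S_{\psi,\gamma}=\mathrm{id}_{L^2}$, which is automatic since that identity is an $L^2$-statement independent of the finer space in which the windows live.
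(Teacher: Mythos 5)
Your case $(ii)$ is essentially the paper's own argument: embed the relevant classical Gelfand--Shilov space into $\mathcal{S}^{[\omega]}_{[\V]}\cap\mathcal{S}^{[\omega]}_{[\check\V]}$ via the growth hypotheses, take the Gaussian as window, and invoke Janssen's theorem producing a dual window for $ab<1$. One correction there: the Gaussian does \emph{not} belong to $\Sigma^{1/2}_{1/2}$ (which requires decay faster than every Gaussian), and Janssen's result gives a dual window in $\mathcal{S}^{1/2}_{1/2}$, not $\Sigma^{1/2}_{1/2}$; likewise the appeal to Gr\"ochenig--Leinert-type spectral invariance is not applicable here, since Gaussian-type (super-exponential) weights violate the GRS condition. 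This slip is harmless for the conclusion, because under hypothesis $(ii)$ (note the Beurling quantifiers ``$\forall\lambda\,\forall h$'' are exactly designed for this) one has $\mathcal{S}^{1/2}_{1/2}\subseteq\mathcal{S}^{[\omega]}_{[\V]}\cap\mathcal{S}^{[\omega]}_{[\check\V]}$, so $\mathcal{S}^{1/2}_{1/2}$-membership of $\psi$ and $\gamma$ suffices. A related over-claim in your opening framing: the assertion that for $ab$ small the canonical dual $S_{\psi,\psi}^{-1}\psi$ ``inherits the decay and smoothness of $\psi$'' for windows in a general Gelfand--Shilov class is not an available theorem in this super-exponential regime --- indeed the paper itself records the Gabor accessibility of general $\mathcal{S}^r_s$ as an open problem --- so the argument must (as you ultimately do) rest on the two explicit constructions, not on a general canonical-dual regularity principle.

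The genuine gap is in case $(i)$. You replace the paper's device by a painless-expansion argument, which requires a compactly supported window $\psi$ in the Beurling class of $\omega$ with $\sum_{k}|\psi(x-ak)|^2$ \emph{constant} (or, equivalently, membership of the canonical dual $b^{d}\psi/\Phi$, $\Phi=\sum_k|T_{ak}\psi|^2$, in the class). You assert this ``partition-of-unity type'' normalization without justification, but it is not a routine fact for Braun--Meise--Taylor classes: the obvious constructions pass through $\sqrt{\Phi}$ or $1/\Phi$, i.e.\ through composition of a class-$(\omega)$ function with an analytic map, and stability of $\mathcal{E}_{[\omega]}$ under such operations is a nontrivial matter for general weight functions (it is likely salvageable here because non-quasianalyticity forces $\omega(t)=o(t)$, but you would have to prove or cite it). The paper sidesteps exactly this issue: it only uses the standard partition of unity $\sum_k T_k\psi=1$ with $\psi$ compactly supported and satisfying the $\phi^*$-bounds for all $h>0$, and then quotes Christensen's theorem, which hands you the dual window as the explicit finite linear combination $\gamma(x)=b\psi(x)+2b\psi(x+1)$ for $0<b\le 1/3$ --- manifestly in the same class --- before tensorizing to dimension $d$. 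As written, your case $(i)$ therefore has an unproved step at its crux; either justify square roots/division within $\mathcal{E}_{[\omega]}$ under the non-quasianalyticity hypothesis, or switch to the Christensen-type dual as in the paper.
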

\begin{proof}
Theorem \ref{l:WZBiOrthRel} implies that  if $(\psi, \gamma) \subset W(\R)$ is a pair of dual windows on $a\Z \times b \Z$, $a,b > 0$, then $(\psi \otimes  \cdots \otimes \psi, \gamma \otimes \cdots \otimes \gamma)\subset W(\R^d)$ is a pair of dual windows on $a\Z^d \times b \Z^d$. Now assume that $(i)$ holds. Then, there exists a  function $\psi: \R \rightarrow \R$ with $\operatorname{supp} \psi \subseteq [0,2]$ such that
$$
 \sup_{p \in \N} \sup_{ x \in [0,2]} |\psi^{(p)}(x)|\exp \left(-\frac{1}{h}\phi^*(hp)\right) < \infty 
$$
for all $h > 0$ and 
$$
\sum_{k \in \Z} T_k \psi = 1.
$$
Fix $0 < b \leq 1/3$. Define
$$
\gamma(x) = b\psi(x) + 2b \psi(x+1), \qquad x \in \R.
$$
In \cite[Theorem 2.2]{C-PairsDualGaborFrameGenCompSupp} it is shown that $(\psi, \gamma)$ is a pair of dual windows on $\Z \times b \Z$. By the remark at the beginning of the proof, we obtain that $(\psi \otimes  \cdots \otimes \psi, \gamma \otimes \cdots \otimes \gamma)\ \subset   \mathcal{S}^{[\omega]}_{[\V]}(\R^d) \cap \mathcal{S}^{[\omega]}_{[\widetilde \V]}(\R^d)$ is a pair of dual windows on $\Z^d \times b \Z^d$. Next, assume that $(ii)$ holds. This condition implies that $\mathcal{S}^{1/2}_{1/2}(\R^d) \subseteq \mathcal{S}^{[\omega]}_{[\V]}(\R^d) \cap \mathcal{S}^{[\omega]}_{[\widetilde \V]}(\R^d)$. Hence, it suffices to show that $\mathcal{S}^{1/2}_{1/2}(\R^d)$ is Gabor accessible. Moreover, by the remark at the beginning of the proof, it is enough to consider the case $d = 1$. Set $\psi(x) = e^{-\pi x^2}$, $x \in \R$. Then,  $\psi \in  \mathcal{S}^{1/2}_{1/2}(\R)$ and Janssen \cite[Proposition B and its proof]{Janssen} showed that for all $a,b > 0$ with $ab <1$ there exists $\gamma \in  \mathcal{S}^{1/2}_{1/2}(\R)$ such that $(\psi, \gamma)$ is a pair of dual windows on $a\Z \times b\Z$
(see also \cite[p.\ 273]{B-J-GaborUnimodWindDecay}).
\end{proof}
Next, we discuss the Gabor accessibility of the spaces $\mathcal{S}^{[\omega]}_{[\eta]}$ and $\mathcal{S}^{[M]}_{[A]}$.
\begin{proposition}\label{GA-2}
Let $\omega$ and $\eta$ be two weight functions. Then, $\mathcal{S}^{[\omega]}_{[\eta]}$ is Gabor accessible if one of the following two conditions is satisfied:
\begin{itemize}
\item[$(i)$] $\omega$ or $\eta$ is non-quasianalytic.
\item[$(ii)$] $\omega(t) = o(t^2)$ and $\eta(t) = o(t^2)$ ($\omega(t) = O(t^2)$ and $\eta(t) = O(t^2)$).
\end{itemize}
\end{proposition}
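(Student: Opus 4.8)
The plan is to reduce everything to Proposition~\ref{GA-1} applied to the weight system $\V_\eta$, using the Fourier transform to transfer non-quasianalyticity of $\eta$ onto $\omega$. Since $\eta$ is radial, each $v_\lambda = e^{\frac1\lambda \eta} \in \V_\eta$ satisfies $\check v_\lambda = v_\lambda$, so $\check\V_\eta = \V_\eta$; hence $\mathcal{S}^{[\omega]}_{[\eta]}$ is Gabor accessible precisely when there exist $\psi,\gamma \in \mathcal{S}^{[\omega]}_{[\eta]}$ and $a,b>0$ such that $(\psi,\gamma)$ is a pair of dual windows on $a\Z^d \times b\Z^d$. By Lemma~\ref{cond-wf} the system $\V_\eta$ satisfies $[\condwM]$ and $[\condN]$, so $\mathcal{S}^{[\omega]}_{[\eta]} \subset W$, and likewise $\mathcal{S}^{[\eta]}_{[\omega]} = \mathcal{S}^{[\eta]}_{[\V_\omega]} \subset W$; thus the analysis and synthesis operators and Theorem~\ref{l:WZBiOrthRel} are available for all windows appearing below.

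First I would record the Fourier symmetry. The Fourier transform is a topological isomorphism $\mathcal{F}\colon \mathcal{S}^{[\omega]}_{[\eta]} \to \mathcal{S}^{[\eta]}_{[\omega]}$ (standard for these spaces; cf.\ \cite{D-N-V-NuclGSKerThm}), and from the commutation relation $\mathcal{F}(M_n T_k f) = e^{-2\pi i k\cdot n} T_n M_{-k} \mathcal{F}f$ together with the unitarity of $\mathcal{F}$ on $L^2$ one checks that, for $\psi,\gamma \in \mathcal{S}^{[\omega]}_{[\eta]}$, the pair $(\psi,\gamma)$ is dual on $a\Z^d \times b\Z^d$ if and only if $(\mathcal{F}\psi,\mathcal{F}\gamma)$ is dual on $b\Z^d \times a\Z^d$ (equivalently, via the Wexler--Raz relations of Theorem~\ref{l:WZBiOrthRel}). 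Consequently $\mathcal{S}^{[\omega]}_{[\eta]}$ is Gabor accessible if and only if $\mathcal{S}^{[\eta]}_{[\omega]}$ is, so I may freely interchange the roles of $\omega$ and $\eta$.

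Now to the cases. For $(i)$, by the Fourier symmetry I may assume $\omega$ is non-quasianalytic, and then Proposition~\ref{GA-1}$(i)$ applied to $\V_\eta$ shows that $\mathcal{S}^{[\omega]}_{[\eta]} = \mathcal{S}^{[\omega]}_{[\V_\eta]}$ is Gabor accessible. For $(ii)$ in the Beurling case, $\eta(t) = o(t^2)$ yields, for every $\lambda,h>0$, that $\frac1\lambda\eta(x) \le h|x|^2$ for $|x|$ large, hence $v_\lambda(x) = e^{\frac1\lambda\eta(x)} = O(e^{h|x|^2})$; together with $\omega(t) = o(t^2)$ these are exactly the hypotheses of Proposition~\ref{GA-1}$(ii)$ for $\V_\eta$, so $\mathcal{S}^{(\omega)}_{(\eta)}$ is Gabor accessible. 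For $(ii)$ in the Roumieu case Proposition~\ref{GA-1}$(ii)$ does not apply directly (it would require $\omega(t) = o(t^2)$), so instead I would invoke Remark~\ref{non-triviality} with $r = s = \frac12$: since $\omega(t) = O(t^2)$ and $\eta(t) = O(t^2)$ we obtain $\mathcal{S}^{1/2}_{1/2}(\R^d) \subseteq \mathcal{S}^{\{\omega\}}_{\{\eta\}}(\R^d)$ (and the same for $\check\eta = \eta$). The proof of Proposition~\ref{GA-1}$(ii)$ produces a pair of dual windows consisting of a Gaussian and a Janssen dual window, both lying in $\mathcal{S}^{1/2}_{1/2}(\R^d)$; these then lie in $\mathcal{S}^{\{\omega\}}_{\{\eta\}}(\R^d)$ and exhibit its Gabor accessibility.

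I expect the main obstacle to be the Fourier symmetry step, namely making precise that $\mathcal{F}$ maps $\mathcal{S}^{[\omega]}_{[\eta]}$ onto $\mathcal{S}^{[\eta]}_{[\omega]}$ and intertwines the Gabor duality on transposed lattices, so that the case ``$\eta$ non-quasianalytic'' can be reduced to Proposition~\ref{GA-1}$(i)$ for $\mathcal{S}^{[\eta]}_{[\omega]}$. The remaining ingredients — the elementary implications $\eta(t) = o(t^2) \Rightarrow v_\lambda = O(e^{h|x|^2})$ for all $h$, and $\eta(t) = O(t^2) \Rightarrow v_\lambda = O(e^{h|x|^2})$ for suitably large $\lambda$ — are routine estimates using the weight function properties, and the remaining steps merely quote Proposition~\ref{GA-1} and Remark~\ref{non-triviality}.
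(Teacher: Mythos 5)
Your argument is correct and follows essentially the same route as the paper: case $(i)$ with $\eta$ non-quasianalytic is handled by the Fourier-transform symmetry (Wexler--Raz plus Plancherel) reducing to Proposition \ref{GA-1}$(i)$ for $\mathcal{S}^{[\eta]}_{[\omega]}$, and case $(ii)$ is deduced from Proposition \ref{GA-1}$(ii)$. Your separate treatment of the Roumieu case of $(ii)$ --- embedding $\mathcal{S}^{1/2}_{1/2}$ into $\mathcal{S}^{\{\omega\}}_{\{\eta\}}$ and reusing the Gaussian/Janssen dual pair --- just reproduces the proof of Proposition \ref{GA-1}$(ii)$, which the paper cites directly, while quietly accommodating the literal $o(t^2)$ hypothesis in its statement.
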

\begin{proof} If $\omega$ is non-quasianalytic or $(ii)$ is satisfied, the result is a direct consequence of Proposition \ref{GA-1}. Now assume that $\eta$ is non-quasianalytic. Since the Fourier transform is an isomorphism from $\mathcal{S}^{[\omega]}_{[\eta]}$ onto $\mathcal{S}_{[\omega]}^{[\eta]}$ and $(\psi, \gamma) \subseteq \mathcal{S}$ is a pair of dual windows on $a\Z^d \times b\Z^d$, $a,b > 0$ if and only if $(\widehat{\psi}, \widehat{\gamma})$ is a pair of dual windows on $b\Z^d \times a\Z^d$ (as follows from Theorem \ref{l:WZBiOrthRel} and Plancherel's theorem), the space $\mathcal{S}^{[\omega]}_{[\eta]}$ is Gabor accessible because $\mathcal{S}^{[\eta]}_{[\omega]}$ is so.
\end{proof}	

\begin{proposition}\label{GA-3}
Let $M$ and $A$ be two weight functions satisfying $(M.2)$. Then, $\mathcal{S}^{[M]}_{[A]}$ is Gabor accessible if one of the following two conditions is satisfied:
\begin{itemize}
\item[$(i)$] $M$ or $A$ is non-quasianalytic.
\item[$(ii)$] $p!^{1/2} \prec M$ and $p!^{1/2} \prec A$ ($p!^{1/2} \subset M$ and $p!^{1/2} \subset A$ ).
\end{itemize}
\end{proposition}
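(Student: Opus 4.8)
The plan is to handle the two conditions separately; since $(M.2)^{*}$ is not assumed, I would not go through the reduction of Lemma \ref{GS-red}, but argue directly -- with ultradifferentiable classes under $(i)$ and with an embedded classical Gelfand--Shilov space under $(ii)$. Recall first that $\V_{A}$ consists of radial, hence even, functions, so $\check{\V}_{A} = \V_{A}$; thus Gabor accessibility of $\mathcal{S}^{[M]}_{[A]}$ means precisely that $(\psi, \gamma)$ is a pair of dual windows on some $a\Z^{d} \times b\Z^{d}$ for suitable $\psi, \gamma \in \mathcal{S}^{[M]}_{[A]}$.

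\emph{Condition $(ii)$.} Here I would show $\mathcal{S}^{1/2}_{1/2}(\R^{d}) \subseteq \mathcal{S}^{[M]}_{[A]}(\R^{d})$ and then quote that $\mathcal{S}^{1/2}_{1/2}(\R^{d})$ is Gabor accessible. In the Roumieu case the inclusion is Remark \ref{non-triviality} with $r = s = 1/2$ (so $r+s \geq 1$). In the Beurling case one uses the \emph{strict} relations $p!^{1/2} \prec M$, $p!^{1/2} \prec A$: a direct computation, absorbing the fixed scale of the defining estimates of $\mathcal{S}^{1/2}_{1/2}$ into the $\prec$-bounds $\alpha!^{1/2} \leq |\alpha|!^{1/2} \leq C_{h} h^{|\alpha|} M_{|\alpha|}$ (valid for every $h > 0$) and the analogous ones for $A$, shows that each $\varphi \in \mathcal{S}^{1/2}_{1/2}$ lies in $\mathcal{S}^{M,h}_{A,h}$ for all $h > 0$, hence in $\mathcal{S}^{(M)}_{(A)}$; it is exactly $\prec$ (and not merely $\subset$) that makes the ``for all $h>0$'' possible, and $\Sigma^{1/2}_{1/2}$ itself cannot be used since it is trivial. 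Finally, $\mathcal{S}^{1/2}_{1/2}(\R^{d})$ is Gabor accessible: as in the proof of Proposition \ref{GA-1}$(ii)$, the Gaussian lies in it and, by Janssen's theorem \cite{Janssen} together with the tensoring remark, has a dual window $\gamma \in \mathcal{S}^{1/2}_{1/2}(\R^{d})$ on a suitable lattice; this pair then also lies in $\mathcal{S}^{[M]}_{[A]}(\R^{d})$.

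\emph{Condition $(i)$.} Suppose first $M$ is non-quasianalytic. Since $M$ satisfies $(M.2)'$ and $\sum_{p} 1/m_{p} < \infty$, the Denjoy--Carleman theorem (in its Beurling, respectively Roumieu, form) furnishes a nonzero $\psi \in \mathcal{E}^{[M]}(\R)$ with $\operatorname{supp}\psi \subseteq [0,2]$ and $\sum_{k \in \Z} T_{k}\psi = 1$; exactly as in the proof of Proposition \ref{GA-1}$(i)$, $\gamma = b\psi + 2b\,\psi(\,\cdot\, + 1)$ with $0 < b \leq 1/3$ is a dual window on $\Z \times b\Z$ by \cite{C-PairsDualGaborFrameGenCompSupp}, whence $(\psi^{\otimes d}, \gamma^{\otimes d})$ is a dual window on $\Z^{d} \times b\Z^{d}$. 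As $\psi$ and $\gamma$ are compactly supported, a one-line estimate using $A_{p}^{1/p} \to \infty$ shows that $\psi^{\otimes d}, \gamma^{\otimes d} \in \mathcal{S}^{[M]}_{[A]}(\R^{d})$ whatever $A$ is. If instead $A$ is non-quasianalytic, I would apply the case just treated to $\mathcal{S}^{[A]}_{[M]}$ and transport the resulting dual pair via the Fourier transform, which is a topological isomorphism $\mathcal{S}^{[M]}_{[A]} \to \mathcal{S}^{[A]}_{[M]}$ -- here $(M.2)$ on both $M$ and $A$ is used -- and which preserves pairs of dual windows by Theorem \ref{l:WZBiOrthRel} and Plancherel's theorem, exactly as in the proof of Proposition \ref{GA-2}.

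The step I expect to be the main obstacle is the Beurling instance of $(i)$: one must be sure that non-quasianalyticity of $M$ in the sense $\sum_{p} 1/m_{p} < \infty$ (equivalently $\int_{0}^{\infty} \omega_{M}(t)/(1+t^{2})\,\dt < \infty$), together with $(M.2)'$, genuinely yields compactly supported functions in the \emph{Beurling} class $\mathcal{E}^{(M)}$ that can be organized into a partition of unity -- this is where the precise form of the hypothesis enters, and it calls for the Beurling version of the Denjoy--Carleman construction rather than the more familiar Roumieu one. All the remaining points -- that compactly supported windows lie in $\mathcal{S}^{[M]}_{[A]}$, and the Fourier-side bookkeeping for the second half of $(i)$ -- are routine in view of the corresponding steps already carried out for Propositions \ref{GA-1} and \ref{GA-2}.
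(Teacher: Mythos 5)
Your proposal is correct and follows essentially the same route as the paper: for condition $(ii)$ and for non-quasianalytic $M$ it repeats the construction of Proposition \ref{GA-1} directly in the weight-sequence setting (compactly supported ultradifferentiable partition of unity plus Christensen's dual window, respectively the Gaussian plus Janssen's theorem, then tensoring), and for non-quasianalytic $A$ it transports a dual pair through the Fourier transform isomorphism $\mathcal{S}^{[M]}_{[A]} \to \mathcal{S}^{[A]}_{[M]}$ exactly as in Proposition \ref{GA-2}. The Beurling-case existence of compactly supported functions in $\mathcal{E}^{(M)}$ that you flag is indeed the standard non-quasianalytic (Denjoy--Carleman/Komatsu) fact the paper also relies on implicitly, so there is no gap.
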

\begin{proof}
 If $M$ is non-quasianalytic or $(ii)$ is satisfied, the result can be shown in the same way as Proposition \ref{GA-1}. Now assume that $A$ is non-quasianalytic. Since the Fourier transform is an isomorphism from $\mathcal{S}_{[A]}^{[M]}$ onto $\mathcal{S}_{[M]}^{[A]}$,  the result can be shown by using the same argument as in the proof of Proposition \ref{GA-2}.
\end{proof}	
\begin{proposition}\label{GA-4}
Let $r,s > 0$. Then, $\Sigma^r_s$ ($\mathcal{S}^r_s$) is Gabor accessible if  $\max(r, s) > 1$ or $\min(r, s) > 1/2$ ($\max(r, s) > 1$ or $\min(r, s) \geq 1/2$).
\end{proposition}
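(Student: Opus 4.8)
The plan is to obtain this as an immediate specialization of Proposition \ref{GA-2} to the weight functions $\omega(t) = t^{1/r}$ and $\eta(t) = t^{1/s}$. First I would recall the identifications $\Sigma^r_s = \mathcal{S}^{(t^{1/r})}_{(t^{1/s})} = \mathcal{S}^{(\omega)}_{(\eta)}$ and $\mathcal{S}^r_s = \mathcal{S}^{\{t^{1/r}\}}_{\{t^{1/s}\}} = \mathcal{S}^{\{\omega\}}_{\{\eta\}}$, and check that $t^{1/r}$ is a weight function for every $r > 0$: monotonicity and the condition $\omega(0) = 0$ are clear, $(\alpha)$ holds since $\omega(2t) = 2^{1/r}\omega(t)$, $(\gamma)$ holds since $\log t = o(t^{1/r})$, and $(\delta)$ holds since $x \mapsto e^{x/r}$ is convex.

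Next I would translate the two hypotheses of Proposition \ref{GA-2} into conditions on $r$ and $s$. For hypothesis $(i)$: the weight function $t^{1/r}$ is non-quasianalytic precisely when $\int_1^\infty t^{1/r - 2}\, \dt < \infty$, i.e.\ when $1/r < 1$, i.e.\ when $r > 1$; likewise $t^{1/s}$ is non-quasianalytic iff $s > 1$. Hence ``$\omega$ or $\eta$ is non-quasianalytic'' is equivalent to $\max(r,s) > 1$. For hypothesis $(ii)$ in the Beurling case: $t^{1/r} = o(t^2)$ iff $1/r < 2$ iff $r > 1/2$, so ``$\omega(t) = o(t^2)$ and $\eta(t) = o(t^2)$'' is equivalent to $\min(r,s) > 1/2$. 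In the Roumieu case: $t^{1/r} = O(t^2)$ iff $1/r \leq 2$ iff $r \geq 1/2$, so ``$\omega(t) = O(t^2)$ and $\eta(t) = O(t^2)$'' is equivalent to $\min(r,s) \geq 1/2$. Combining these, Proposition \ref{GA-2} yields exactly that $\Sigma^r_s$ is Gabor accessible whenever $\max(r,s) > 1$ or $\min(r,s) > 1/2$, and that $\mathcal{S}^r_s$ is Gabor accessible whenever $\max(r,s) > 1$ or $\min(r,s) \geq 1/2$.

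I do not expect a genuine obstacle here: the statement is a bookkeeping consequence of Proposition \ref{GA-2}. The only point worth a remark is that in the edge cases (for instance $r = s = 1/2$ for the Roumieu space) the space should be nontrivial for Gabor accessibility to be non-vacuous; but this is automatic, since each of the stated hypotheses forces $r + s > 1$ in the Beurling case and $r + s \geq 1$ in the Roumieu case, which by Remark \ref{non-triviality} is precisely the nontriviality condition for $\Sigma^r_s$ (resp.\ $\mathcal{S}^r_s$).
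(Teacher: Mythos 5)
Your proposal is correct and is in the same spirit as the paper's proof, which is a one-line specialization of a previously established Gabor-accessibility criterion; the only difference is which criterion you invoke. The paper cites Proposition \ref{GA-3} with the Gevrey sequences $M = p!^r$, $A = p!^s$ (non-quasianalytic iff the exponent exceeds $1$, and $p!^{1/2} \prec p!^r$ iff $r > 1/2$, $p!^{1/2} \subset p!^r$ iff $r \geq 1/2$), whereas you cite Proposition \ref{GA-2} with the weight functions $\omega(t) = t^{1/r}$, $\eta(t) = t^{1/s}$, relying on the identification $\Sigma^r_s = \mathcal{S}^{(t^{1/r})}_{(t^{1/s})}$, $\mathcal{S}^r_s = \mathcal{S}^{\{t^{1/r}\}}_{\{t^{1/s}\}}$ that the paper records in Section \ref{sect:GS}. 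Your verifications (that $t^{1/r}$ is a weight function, non-quasianalytic exactly when $r > 1$, and $o(t^2)$ resp.\ $O(t^2)$ exactly when $r > 1/2$ resp.\ $r \geq 1/2$) are accurate, so both routes deliver the stated conditions; neither buys anything substantive over the other, since Propositions \ref{GA-2} and \ref{GA-3} are parallel corollaries of Proposition \ref{GA-1}. Your closing remark on non-triviality is not needed for the statement but is consistent with Remark \ref{non-triviality}.
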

\begin{proof}
This follows from Proposition \ref{GA-3}.
\end{proof}
	
Finally, we would like to point out the following open problem.
\begin{problem}
Let $r,s > 0$. Is every non-trivial space $\mathcal{S}^r_s$ Gabor accessible? This would imply that every non-trivial space  $\Sigma^r_s$ is Gabor accessible.
If not, characterize the Gabor accessibility of  the spaces $\Sigma^r_s$  and $\mathcal{S}^r_s$ in terms of  $r$ and $s$.
\end{problem}

\section{Statement of the main results}\label{sect-main}
Let $\omega$ be a weight function and let $\V$ be  a weight system (on $\R^d$). We define 
	\[ \ZZ^{(\omega)}_{(\V)} := \varprojlim_{h \rightarrow 0^{+}} \varinjlim_{\lambda \rightarrow 0^+} \mathcal{S}^{\omega,h}_{1/v_\lambda}, \qquad \ZZ^{\{\omega\}}_{(\V)} := \varprojlim_{\lambda \rightarrow \infty} \varinjlim_{h \rightarrow \infty} \mathcal{S}^{\omega,h}_{1/v_\lambda}. \]
Then, $\ZZ^{[\omega]}_{[\V]}$ is a $(PLB)$-space. The first main result of this article may now be formulated as follows.
\begin{theorem}
		\label{t:ZSpTop}
		Let $\omega$ be a weight function and let $\V$ be a weight system satisfying $[\condM]$ and $[\condN]$. Consider the following  statements:
			\begin{itemize}
				\item[$(i)$] $\V$ satisfies $(\condDN)$ ($(\condooOmega)$).
				\item[$(ii)$] $\ZZ^{[\omega]}_{[\V]}$  is ultrabornological.
				\item[$(iii)$] $\ZZ^{[\omega]}_{[\V]}$ is barrelled. 
			\end{itemize}
		Then, 
			\begin{itemize}
				\item[$(a)$]  If $\mathcal{S}^{[\omega]}_{[\V]} \neq \{0\}$, then $(i) \Rightarrow (ii) \Rightarrow (iii)$.
				\item[$(b)$] If $\mathcal{S}^{[\omega]}_{[\V]}$ is Gabor accessible, then also $(iii) \Rightarrow (i)$.
			\end{itemize}
	\end{theorem}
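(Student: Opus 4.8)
The plan is to follow Grothendieck's complementation idea, transferring the ultrabornologicity problem for $\ZZ^{[\omega]}_{[\V]}$ to a weighted $(PLB)$-space of \emph{continuous} functions on $\R^{2d}$ via the short-time Fourier transform, and then to invoke the results of \cite{A-B-B-ProjLimWeighLBSpContFunc} recalled in Section \ref{sect-cont}. The first task is to identify the appropriate $(PLB)$-space of continuous functions. Writing $\phi(x) = \omega(e^x)$, one sees that the weight governing $\mathcal{S}^{\omega,h}_{1/v_\lambda}$ on the Fourier--Bargmann/STFT side is, up to the usual $h$-losses controlled by Lemma \ref{M12}, of the form $(x,\xi) \mapsto v_\lambda(x)^{-1} e^{-\omega(\xi)/h}$; so the natural target is $V\ZZ := \varprojlim \varinjlim C_{w}(\R^{2d})$ with the corresponding double-indexed family of weights $w$ built from $\V$ (in the first slot) and $\V_\omega$ (in the second slot, via $e^{\frac{1}{h}\omega}$). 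The conditions $[\condM]$, $[\condN]$ on $\V$ together with Lemma \ref{cond-wf} for $\V_\omega$ guarantee that the product weight system on $\R^{2d}$ satisfies the hypotheses needed to apply the continuous-function results, and one checks that $(\condDN)$ ($(\condooOmega)$) for $\V$ is equivalent to the analogous condition for the product weight system because $\V_\omega$ always satisfies $(\condDN)$ and never $(\condooOmega)$ (Lemma \ref{cond-wf}(ii)--(iii)); in the Beurling case the $\condDN$-type hypothesis for the continuous model is then the deciding one, and dually in the Roumieu case.

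\textbf{Key steps.} First I would prove part $(a)$. Step 1: show that $V_\psi$ maps $\ZZ^{[\omega]}_{[\V]}$ continuously into $V\ZZ$ and that $V_\gamma^*$ maps $V\ZZ$ continuously back into $\ZZ^{[\omega]}_{[\V]}$, for a fixed nonzero Gaussian (or any Schwartz) window $\psi$ and a suitable $\gamma$ with $(\gamma,\psi)_{L^2}\neq 0$ — this uses the standard STFT estimates on derivatives $\varphi^{(\alpha)}$, the submultiplicativity $[\condM]$ to absorb translations in the $x$-variable, and Lemma \ref{M12} to trade the index $h$ for a comparable one. By the reconstruction formula \eqref{eq:reconstructSTFTL2}, $\frac{1}{(\gamma,\psi)_{L^2}} V_\gamma^* \circ V_\psi = \id$ on $\ZZ^{[\omega]}_{[\V]}$, so $\ZZ^{[\omega]}_{[\V]}$ is (isomorphic to) a complemented subspace of $V\ZZ$. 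Step 2: invoke the theorem from \cite{A-B-B-ProjLimWeighLBSpContFunc} stating that under $(\condDN)$ ($(\condooOmega)$) and the $[\condM]$/$[\condN]$-type conditions, $V\ZZ$ is ultrabornological; since a complemented subspace of an ultrabornological space is ultrabornological, $(i)\Rightarrow(ii)$ follows. The implication $(ii)\Rightarrow(iii)$ is immediate since ultrabornological spaces are barrelled. For part $(b)$: assuming $\mathcal{S}^{[\omega]}_{[\V]}$ is Gabor accessible, fix dual windows $\psi \in \mathcal{S}^{[\omega]}_{[\V]}$, $\gamma \in \mathcal{S}^{[\omega]}_{[\check\V]}$ on $a\Z^d\times b\Z^d$. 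The Wexler--Raz relation \eqref{WR} gives $\frac{1}{(ab)^d} C^{1/a,1/b}_\psi \circ D^{1/a,1/b}_\gamma = \id$ on the sequence space side, which lets us realize a suitable weighted $(PLB)$-\emph{sequence} space $\lambda\ZZ$ (a Köthe-type $(PLB)$-space indexed by $\frac1a\Z^d\times\frac1b\Z^d$, built from the values of the weights on the lattice) as a complemented subspace of $\ZZ^{[\omega]}_{[\V]}$ via $D_\gamma$, with left inverse (up to a constant) $C_\psi$; again this requires continuity of $C_\psi, D_\gamma$ between the discrete and continuous models, which follows from $\psi,\gamma \in W$, $[\condM]$ and \eqref{discrete-l1}. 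Now if $\ZZ^{[\omega]}_{[\V]}$ is barrelled, so is its complemented subspace $\lambda\ZZ$; but for such Köthe $(PLB)$-sequence spaces the results of \cite{A-B-B-ProjLimWeighLBSpContFunc} give that barrelledness already forces $(\condDN)$ ($(\condooOmega)$) on the defining weight system, hence on $\V$. Thus $(iii)\Rightarrow(i)$.

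\textbf{Main obstacle.} The routine parts are the many STFT/Gabor norm estimates, but the genuinely delicate point is the \emph{bookkeeping of the double index} $(h,\lambda)$ through the two directions of the projective-inductive limit, and the precise matching of the abstract conditions from \cite{A-B-B-ProjLimWeighLBSpContFunc} (which are stated for $(PLB)$-spaces of continuous functions / Köthe sequence spaces with their own families of hypotheses) with $(\condDN)$/$(\condooOmega)$ for $\V$. In particular one must verify that the product weight system $\V \otimes \V_\omega$ on $\R^{2d}$ inherits $[\condM]$, $[\condN]$ (and the auxiliary consequences \eqref{joo}--\eqref{discrete-l1}) from $\V$, using Lemma \ref{cond-wf}, and — crucially — that the condition $(\condDN)$ of the product is equivalent to $(\condDN)$ of $\V$ alone, which hinges on $\V_\omega$ satisfying $(\condDN)$ but never spoiling it, and dually for $(\condooOmega)$. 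Getting this equivalence exactly right, together with the interpolation-type inequalities hidden in $(\condDN)$/$(\condooOmega)$ interacting with the exponential-of-$\omega$ weights, is where the real work lies; the STFT just provides the transport mechanism.
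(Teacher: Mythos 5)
Your overall architecture coincides with the paper's: in part $(a)$ you complement $\ZZ^{[\omega]}_{[\V]}$ into a weighted $(PLB)$-space of continuous functions on $\R^{2d}$ via $V_\psi$, $V_\gamma^*$ and the reconstruction formula, and in part $(b)$ you complement a discrete K\"othe-type $(PLB)$-space into $\ZZ^{[\omega]}_{[\V]}$ via $D_\gamma$, $C_\psi$ and Wexler--Raz, and then appeal to \cite{A-B-B-ProjLimWeighLBSpContFunc}. There are, however, two genuine gaps. The first concerns the window in part $(a)$: you propose a Gaussian ``or any Schwartz'' window, but the continuity of $V_\psi:\ZZ^{[\omega]}_{[\V]}\to C_{\ZZ^{[\omega]}_{[\V]}}(\R^{2d})$ and of $V_\gamma^*$ in the opposite direction forces $\psi\in\mathcal{S}^{[\omega]}_{[\V]}$ and $\gamma\in\mathcal{S}^{[\omega]}_{[\check\V]}$: the window must be $\omega$-ultradifferentiable and decay like $1/v_\lambda$, and a Gaussian or a generic Schwartz function need not lie in these classes (e.g.\ when $\omega(t)$ grows faster than $t^2$, or when the $v_\lambda$ grow super-Gaussianly). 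This is precisely where the hypothesis $\mathcal{S}^{[\omega]}_{[\V]}\neq\{0\}$ must be used: by translation invariance one finds $\varphi$ with $\varphi(0)\neq0$, and then $\psi=\varphi\check\varphi\in\mathcal{S}^{[\omega]}_{[\V]}\cap\mathcal{S}^{[\omega]}_{[\check\V]}$ with $\|\psi\|_{L^2}\neq0$, so that one may take $\gamma=\psi$ (Lemma \ref{ex-window}). Your proposal never produces an admissible pair $(\psi,\gamma)$ with $(\gamma,\psi)_{L^2}\neq0$ inside the required classes.

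The second, more substantial, gap is the bridge to \cite{A-B-B-ProjLimWeighLBSpContFunc}. That paper's results are not formulated in terms of $(\condDN)$ or $(\condooOmega)$: they state that the weight-grid condition $\condQ$ implies ultrabornologicity (Theorem \ref{t:QImplyUltraBorn}) and that barrelledness implies $\condwQ$ (Theorem \ref{t:BarrelImplywQ}). There is no off-the-shelf theorem of the form ``under $(\condDN)$ and $[\condM]$, $[\condN]$ the space $V\ZZ$ is ultrabornological'', and your reduction --- that $(\condDN)$ for the product weight system is equivalent to $(\condDN)$ for $\V$ because $\V_\omega$ satisfies $(\condDN)$ but never $(\condooOmega)$ --- is not the statement that is actually needed. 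What is needed is the equivalence between the grid conditions $\condQ$/$\condwQ$ for the mixed grids $\A_{(\V_\omega,\V)}$, resp.\ $\A_{\{\V,\V_\omega\}}$, and $(\condDN)$, resp.\ $(\condooOmega)$, for $\V$; this is Lemma \ref{l:EquivWeighFuncGridCond}, whose proof is the real content of the theorem: one direction extracts the interpolation inequality by minimizing $r\mapsto v_\lambda(x)r^H+v_\nu(x)/r$ (giving a single $\theta$, upgraded to all $\theta\in(0,1)$ by induction), the other uses the convex splitting $M=\theta N+(1-\theta)K$ together with $a^\theta b^{1-\theta}\le a+b$. You correctly identify this matching as the main obstacle, but you neither carry it out nor sketch a viable route, so both $(i)\Rightarrow(ii)$ and $(iii)\Rightarrow(i)$ remain unproved as written. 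Finally, in part $(b)$, barrelledness of the discrete space only yields $\condwQ$ for the weights restricted to the lattice; one still needs \eqref{joo} (i.e.\ $[\condwM]$) to pass from the lattice to all of $\R^{2d}$ before the equivalence can be applied to $\V$ itself.
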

The assumption that $\mathcal{S}^{[\omega]}_{[\V]}$ is non-trivial and Gabor accessible in part $(a)$ and part $(b)$ of Theorem \ref{t:ZSpTop}, respectively, should be interpreted as implicit growth conditions on $\omega$ and $\V$ under which these results are valid. We refer to Remark \ref{non-triviality} and Proposition \ref{GA-1} for explicit conditions on $\omega$ and $\V$ which ensure that $\mathcal{S}^{[\omega]}_{[\V]}$ is non-trivial and Gabor accessible, respectively.

Next, we discuss our results about multiplier spaces. We need some preparation. Given a weight function $\omega$ and  a weight system $\V$,  we denote by $\mathcal{S}'^{[\omega]}_{[\V]}$ the strong dual of $\mathcal{S}^{[\omega]}_{[\V]}$.  We write $C_{[\V]}$ for the space consisting of all $f \in C(\R^d)$ such that $\sup_{x \in \R^d} |f(x)|/v_\lambda(x) < \infty$ for some $\lambda > 0$ (for all $\lambda > 0$). Note that $\mathcal{Z}^{[\omega]}_{[\V]} \subset C_{[\V]}$.

\begin{lemma} \label{new}
 Let $\omega$ be a weight function and let $\V$ be a weight system satisfying $[\condwM]$ and $[\condN]$. Suppose that $\mathcal{S}^{[\omega]}_{[\V]} \neq \{ 0\}$. The mapping 
\begin{equation}
 C_{[\V]} \rightarrow \mathcal{S}'^{[\omega]}_{[\V]}, \, f \mapsto \left ( \varphi \mapsto  \int_{\R^d} f(x) \varphi(x) \dx \right)
\label{mapping}
\end{equation}
is well-defined and injective. Consequently, we may view $C_{[\V]}$  as a vector subspace of $\mathcal{S}'^{[\omega]}_{[\V]}$.
\end{lemma}
\begin{proof}
Condition $[\condN]$ implies that, for each $f \in C_{[\V]}$,  
$$
\langle f, \varphi \rangle = \int_{\R^d} f(x) \varphi(x) \dx, \qquad \varphi \in \mathcal{S}^{[\omega]}_{[\V]},
$$
is a well-defined  continuous linear functional on $\mathcal{S}^{[\omega]}_{[\V]}$. We now show that the mapping \eqref{mapping} is injective. Let $f \in C_{[\V]}$ be such that $\langle f, \varphi \rangle = 0$ for all $\varphi \in \mathcal{S}^{[\omega]}_{[\V]}$. Since the space  $\mathcal{S}^{[\omega]}_{[\V]}$ is translation-invariant and non-trivial, there exists $\varphi \in  \mathcal{S}^{[\omega]}_{[\V]}$ with $\varphi(0) = 1$. Choose $\chi \in \mathcal{D}(\R^d)$ with $\chi(0) = 1$. Set $\psi = \widehat{\chi}$ and note that $\int_{\R^d} \psi(x)\dx = 1$. We write $\psi_n(x) = n^d \psi(nx)$ for $n \in \N$. Lemma \ref{M12} implies that $\varphi \widetilde{\psi}_n \in  \mathcal{S}^{[\omega]}_{[\V]}$ for all $n \in \N$. Fix an arbitrary $x \in \R^{d}$. Since $fT_x \varphi \in C(\R^d) \cap L^\infty$, we have that
$$
f(x) = f(x)\varphi(0) = (fT_x\varphi)(x) = \lim_{n \to \infty} (fT_x\varphi) \ast \psi_n (x) = \lim_{n \to \infty}  \langle f, T_x(\varphi \widetilde{\psi}_n) \rangle = 0.
$$
\end{proof}
Let $\omega$ be a weight function and let $\V$ be a weight system satisfying the assumptions of Lemma \ref{new}. The space $\mathcal{S}^{[\omega]}_{[\V]}$ is an algebra under pointwise multiplication and  the mapping
$
\mathcal{S}^{[\omega]}_{[\V]} \times \mathcal{S}^{[\omega]}_{[\V]} \rightarrow \mathcal{S}^{[\omega]}_{[\V]}, ~  (\varphi, \psi) \mapsto \varphi \cdot \psi
$
is separately continuous.  For $f \in \mathcal{S}'^{[\omega]}_{[\V]}$ and $\varphi \in \mathcal{S}^{[\omega]}_{[\V]}$  we define $\varphi \cdot f \in  \mathcal{S}'^{[\omega]}_{[\V]}$ via transposition, i.e.,
$\langle \varphi \cdot f, \psi \rangle := \langle f, \varphi \cdot \psi \rangle$ for $\psi \in \mathcal{S}^{[\omega]}_{[\V]}$.
Then, the mapping 
$
\mathcal{S}^{[\omega]}_{[\V]}  \times \mathcal{S}'^{[\omega]}_{[\V]} \rightarrow \mathcal{S}'^{[\omega]}_{[\V]}, ~  (\varphi, f) \mapsto \varphi \cdot f
$
is separately continuous. We define the \emph{multiplier space of $\mathcal{S}^{[\omega]}_{[\V]}$} as
$$
\mathcal{O}_{M}(\mathcal{S}^{[\omega]}_{[\V]}) := \{ f \in  \mathcal{S}'^{[\omega]}_{[\V]} ~  | ~   \varphi \cdot f \in \mathcal{S}^{[\omega]}_{[\V]} \mbox{ for all } \varphi \in \mathcal{S}^{[\omega]}_{[\V]} \}.
$$
Fix $f \in \mathcal{O}_{M}(\mathcal{S}^{[\omega]}_{[\V]})$. The closed graph theorem of De Wilde and the continuity of the mapping $
\mathcal{S}^{[\omega]}_{[\V]} \rightarrow \mathcal{S}'^{[\omega]}_{[\V]}, ~  \varphi \mapsto \varphi \cdot f
$ imply that the mapping
$
\mathcal{S}^{[\omega]}_{[\V]} \rightarrow \mathcal{S}^{[\omega]}_{[\V]}, ~  \varphi \mapsto \varphi \cdot f
$
is continuous. We endow $\mathcal{O}_{M}(\mathcal{S}^{[\omega]}_{[\V]})$ with the topology induced by the embedding
	\[ \mathcal{O}_{M}(\mathcal{S}^{[\omega]}_{[\V]}) \rightarrow L_{b}(\mathcal{S}^{[\omega]}_{[\V]}, \mathcal{S}^{[\omega]}_{[\V]}), \, f \mapsto ( \varphi \mapsto \varphi \cdot f) . \]	
We then have:
\begin{theorem}\label{main-2}
		Let $\omega$ be a weight function and let $\V$ be a weight system satisfying $[\condM]$, $[\condN]$ and $[\condSquare]$. Suppose that $\mathcal{S}^{[\omega]}_{[\V]} \neq \{0\}$. Then,  $\mathcal{O}_{M}(\mathcal{S}^{[\omega]}_{[\V]}) = \ZZ^{[\omega]}_{[\V]}$ as locally convex spaces.
\end{theorem}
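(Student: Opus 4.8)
The plan is to establish the two continuous inclusions $\ZZ^{[\omega]}_{[\V]} \hookrightarrow \mathcal{O}_M(\mathcal{S}^{[\omega]}_{[\V]})$ and $\mathcal{O}_M(\mathcal{S}^{[\omega]}_{[\V]}) \hookrightarrow \ZZ^{[\omega]}_{[\V]}$ separately, and then conclude that they are topological isomorphisms onto their images by an open mapping / closed graph argument. First I would show the easy inclusion: if $f \in \ZZ^{[\omega]}_{[\V]}$, i.e. $f$ lives in some $\mathcal{S}^{\omega,h}_{1/v_\lambda}$ (with the appropriate quantifier pattern for the Beurling/Roumieu case), and $\varphi \in \mathcal{S}^{\omega,k}_{v_k}$, then $\varphi \cdot f \in C^\infty(\R^d)$ and one estimates $\norm{\varphi \cdot f}_{\mathcal{S}^{\omega,l}_{v_m}}$ using the Leibniz rule on $(\varphi f)^{(\alpha)}$, the submultiplicativity of the $\phi^*$-weights via Lemma \ref{M12} (to absorb the combinatorial factor $\binom{\alpha}{\beta}$ and combine $\phi^*(h\cdot) + \phi^*(k\cdot)$ into $\phi^*(l\cdot)$), and the weight system condition $[\condSquare]$ (to turn the product of a growth weight $v_k$ and a decay weight $1/v_\lambda$ into a single $v_m$ with $m$ dominating both, after choosing $\lambda$ large enough relative to $k$). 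This gives boundedness of the multiplication operator $\varphi \mapsto \varphi \cdot f$ on the relevant Banach steps, hence a continuous linear map $\ZZ^{[\omega]}_{[\V]} \to L_b(\mathcal{S}^{[\omega]}_{[\V]}, \mathcal{S}^{[\omega]}_{[\V]})$ landing in $\mathcal{O}_M$; tracking the constants shows this is continuous for the $(PLB)$-topology on the source and the operator-space topology on the target.

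The reverse inclusion is the substantial part. Here the idea, following the time-frequency philosophy announced in the introduction, is to recover $f$ from the multiplication operator by testing against a well-chosen family of functions. Concretely: fix (via Gabor accessibility — but note we are only assuming $\mathcal{S}^{[\omega]}_{[\V]}\neq\{0\}$, so more care is needed; I would instead use a single nonzero $\psi \in \mathcal{S}^{[\omega]}_{[\V]}$ whose existence is guaranteed, together with translates/modulations $M_\xi T_x \psi$) a function whose translates $T_x \psi$ never all vanish simultaneously, or better, a function $\psi$ with $\sum_{k} T_k\psi \equiv 1$ when $\omega$ is non-quasianalytic, or a Gaussian-type window otherwise; then $f = (\text{const}) \sum_k (T_k \psi)\cdot(T_k\bar\psi \cdot f)$ or a similar partition-of-unity / STFT reproduction identity expresses $f$ in terms of the values $(T_k\bar\psi)\cdot f \in \mathcal{S}^{[\omega]}_{[\V]}$. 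Since $f \in \mathcal{O}_M$, each such function lies in $\mathcal{S}^{[\omega]}_{[\V]}$ with a norm controlled — via the closed graph theorem and the operator norm of $\varphi \mapsto \varphi\cdot f$ — uniformly in $k$ up to the decay of $\psi$ and $[\condM]/[\condN]$; summing the pieces against the weighted estimates and using $[\condwM]$ (contained in $[\condM]$) to transfer the weight $v_\mu$ across the translate shows $f \in \mathcal{S}^{\omega,h}_{1/v_\lambda}$ for suitable indices, i.e. $f \in \ZZ^{[\omega]}_{[\V]}$, with the embedding continuous.

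Once both inclusions are known to be continuous, $\ZZ^{[\omega]}_{[\V]}$ and $\mathcal{O}_M(\mathcal{S}^{[\omega]}_{[\V]})$ are the same vector space with two comparable locally convex topologies, and since a $(PLB)$-space and a space of the form $\mathcal{O}_M$ (a closed subspace of $L_b$ of a nice space) both have webs / are in a duality admitting De Wilde's closed graph theorem, the two topologies coincide. The main obstacle I anticipate is the reverse inclusion under only the non-triviality hypothesis: without Gabor accessibility one does not get a clean frame expansion, so the recovery of $f$ and, crucially, the \emph{uniform} (in the lattice point $k$) control of the norms of the localized pieces $(T_k \bar\psi)\cdot f$ in $\mathcal{S}^{[\omega]}_{[\V]}$ must be squeezed out of the boundedness of the single operator $\varphi\mapsto\varphi\cdot f$ together with the invariance properties $[\condM]$, $[\condN]$, $[\condSquare]$ of the weight system; matching the quantifier chains in these three conditions with those defining the projective and inductive limits (and switching them correctly between the Beurling and Roumieu cases) is where the bookkeeping will be heaviest, and where Lemma \ref{M12} and \eqref{joo}, \eqref{discrete-l1} will do the real work.
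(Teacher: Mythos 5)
Your first inclusion is essentially the paper's argument (Leibniz rule, Lemma \ref{M12}, $[\condSquare]$; cf.\ Lemma \ref{main-lemma-3}), so that half is fine. The genuine gap is in the reverse inclusion: the recovery mechanisms you propose do not exist under the stated hypotheses. A partition of unity $\sum_k T_k\psi\equiv 1$ with $\psi\in\mathcal{S}^{[\omega]}_{[\V]}$ requires $\omega$ non-quasianalytic, and a Gaussian-type window lies in $\mathcal{S}^{[\omega]}_{[\V]}$ only under growth restrictions such as $\omega(t)=o(t^2)$ and Gaussian bounds on $\V$; the theorem assumes only $\mathcal{S}^{[\omega]}_{[\V]}\neq\{0\}$, with $\omega$ possibly quasianalytic and $\V$ of arbitrary growth, so neither fallback is available, and an identity like $f=c\sum_k(T_k\psi)\cdot(T_k\overline{\psi}\cdot f)$ would in addition need $\sum_k|T_k\psi|^2$ to be constant (or at least bounded below), which a generic nonzero $\psi$ does not satisfy. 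The paper's way around this is that no discrete or frame-type expansion is needed at all: one takes a single window $\psi=\varphi\check{\varphi}\in\mathcal{S}^{[\omega]}_{[\V]}\cap\mathcal{S}^{[\omega]}_{[\check\V]}$ (Lemma \ref{ex-window}, using only non-triviality and $[\condwM]$) and uses the \emph{continuous} STFT inversion $\|\psi\|_{L^2}^{-2}V^*_\psi\circ V_\psi=\id$, which holds for any nonzero window, extended to $\ZZ^{[\omega]}_{[\V]}$ and to $\mathcal{S}'^{[\omega]}_{[\V]}$ (Propositions \ref{STFT-Z} and \ref{STFT-dual}). The crucial identity $V_\psi f(x,\xi)=\int((T_x\overline{\psi})\cdot f)(t)e^{-2\pi i\xi\cdot t}\dt$ (proved via an approximation $\chi_n\varphi\to\varphi$) converts the problem into estimating $\|\varphi\cdot f\|$ uniformly over the bounded family $\{T_x\psi\,v(x)\}$ (resp.\ $\{T_x\psi/v_\lambda(x)\}$), exactly exploiting the $L_b(\mathcal{S},\mathcal{S})$-topology of $\mathcal{O}_M$; the topological estimate moreover requires the projective descriptions of $C_{\ZZ^{[\omega]}_{[\V]}}$ (Lemma \ref{projective}), with the Roumieu case handled through the auxiliary weights $\sigma=o(\omega)$. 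None of these steps is present, even in outline, in your proposal.

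A secondary problem is your concluding paragraph: once both identity maps are shown continuous for the natural topologies there is nothing left to prove, and the web/De Wilde argument you invoke is not valid as a substitute. Two comparable webbed topologies need not coincide, and De Wilde's open mapping theorem would require the target to be ultrabornological, which is precisely what can fail for $\ZZ^{[\omega]}_{[\V]}$ (this is the content of Theorem \ref{t:ZSpTop}, e.g.\ in the Roumieu case without $(\condooOmega)$). So the topological identity must come, as in the paper, from proving continuity of both inclusions directly, not from an open mapping upgrade.
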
	
 We end this section by discussing the structural and topological properties of the multiplier spaces of $\mathcal{S}^{[\omega]}_{[\eta]}$ and $\mathcal{S}^{[M]}_{[A]}$. Given two weight functions $\omega$ and $\eta$, we write $\ZZ^{[\omega]}_{[\eta]} = \ZZ^{[\omega]}_{[\V_\eta]}$.
 \begin{theorem}\label{UB-1}
Let $\omega$ and $\eta$ be two weight functions. Suppose that $\mathcal{S}^{[\omega]}_{[\eta]} \neq \{0\}$. Then,  $\mathcal{O}_{M}( \mathcal{S}^{[\omega]}_{[\eta]}) = \ZZ^{[\omega]}_{[\eta]}$ as locally convex spaces. Moreover,
\begin{itemize}
\item[$(i)$] $\mathcal{O}_{M}( \mathcal{S}^{(\omega)}_{(\eta)})$ is ultrabornological.
\item[$(ii)$] If $\mathcal{S}^{\{\omega\}}_{\{\eta\}}$ is Gabor accessible, then $\mathcal{O}_{M}( \mathcal{S}^{\{\omega\}}_{\{\eta\}})$ is not  ultrabornological.
\end{itemize}
\end{theorem}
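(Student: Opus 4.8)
The plan is to deduce everything from the two general theorems already at hand. First, to get the identity $\mathcal{O}_{M}(\mathcal{S}^{[\omega]}_{[\eta]}) = \ZZ^{[\omega]}_{[\eta]}$ as locally convex spaces, I would apply Theorem \ref{main-2} to the weight system $\V = \V_\eta$. For this I must check its hypotheses: by Lemma \ref{cond-wf}$(i)$ the system $\V_\eta$ satisfies $[\condM]$, $[\condN]$ and $[\condSquare]$, and $\mathcal{S}^{[\omega]}_{[\eta]} = \mathcal{S}^{[\omega]}_{[\V_\eta]} \neq \{0\}$ is exactly the standing assumption. So Theorem \ref{main-2} applies verbatim and gives the topological identity $\mathcal{O}_{M}(\mathcal{S}^{[\omega]}_{[\eta]}) = \ZZ^{[\omega]}_{[\eta]}$.

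For part $(i)$, I would invoke Theorem \ref{t:ZSpTop}$(a)$ with $\V = \V_\eta$ in the Beurling case: since $\mathcal{S}^{(\omega)}_{(\eta)} \neq \{0\}$ and, by Lemma \ref{cond-wf}$(ii)$, $\V_\eta$ satisfies $(\condDN)$, the implication $(i) \Rightarrow (ii)$ of that theorem yields that $\ZZ^{(\omega)}_{(\eta)}$ is ultrabornological. Combined with the identity just established, $\mathcal{O}_{M}(\mathcal{S}^{(\omega)}_{(\eta)})$ is ultrabornological.

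For part $(ii)$, I would argue by contradiction using the Roumieu case of Theorem \ref{t:ZSpTop}. Suppose $\mathcal{S}^{\{\omega\}}_{\{\eta\}}$ is Gabor accessible and that $\mathcal{O}_{M}(\mathcal{S}^{\{\omega\}}_{\{\eta\}}) = \ZZ^{\{\omega\}}_{\{\eta\}}$ were ultrabornological. Then it is barrelled, so statement $(iii)$ of Theorem \ref{t:ZSpTop} holds; by part $(b)$ of that theorem (applicable since $\V_\eta$ satisfies $[\condM]$, $[\condN]$ and $\mathcal{S}^{\{\omega\}}_{\{\eta\}}$ is Gabor accessible) we would get $(iii) \Rightarrow (i)$, i.e.\ $\V_\eta$ satisfies $(\condooOmega)$. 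But Lemma \ref{cond-wf}$(iii)$ says $\V_\eta$ never satisfies $(\condooOmega)$, a contradiction. Hence $\mathcal{O}_{M}(\mathcal{S}^{\{\omega\}}_{\{\eta\}})$ is not ultrabornological.

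The only genuinely non-routine point is marshalling the correct hypotheses for each invocation — in particular tracking the Beurling/Roumieu dichotomy so that part $(i)$ uses the Beurling half of Theorem \ref{t:ZSpTop}$(a)$ (where $(\condDN)$ is the relevant invariant, supplied by Lemma \ref{cond-wf}$(ii)$) while part $(ii)$ uses the Roumieu half of part $(b)$ (where $(\condooOmega)$ is the relevant invariant, ruled out by Lemma \ref{cond-wf}$(iii)$). Everything else is a direct citation of results already proved in the paper, so no further computation is needed.
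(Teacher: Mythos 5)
Your proposal is correct and follows exactly the paper's own argument, which deduces Theorem \ref{UB-1} from Lemma \ref{cond-wf}, Theorem \ref{t:ZSpTop} and Theorem \ref{main-2}; you have merely made the routine hypothesis-checking and the Beurling/Roumieu case distinction explicit.
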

\begin{proof}
This follows from Lemma \ref{cond-wf}, Theorem \ref{t:ZSpTop} and Theorem \ref{main-2}.
\end{proof}
We refer to Proposition \ref{GA-2} for conditions on $\omega$ and $\eta$ which ensure that $\mathcal{S}^{\{\omega\}}_{\{\eta\}}$ is Gabor accessible.

Let $M$ and $A$ be two weight sequences.  For $h,\lambda > 0$ we define $\mathcal{Z}^{M,h}_{A,\lambda}$ as the Banach space consisting of all $\varphi \in C^{\infty}(\R^{d})$ such that
\[ \norm{\varphi}_{\mathcal{Z}^{M,h}_{A,\lambda} } := \sup_{\alpha \in \N^{d}} \sup_{ x \in \R^{d}} \frac{|\varphi^{(\alpha)}(x)|e^{-\omega_A\left(\frac{1}{\lambda}|x| \right)}}{h^{|\alpha|}M_{|\alpha|}} < \infty.\]
We define
	\[ \mathcal{Z}^{(M)}_{(A)} := \varprojlim_{h \rightarrow 0^{+}} \varinjlim_{\lambda \rightarrow 0^+} \mathcal{Z}^{M,h}_{A,\lambda} , \qquad \mathcal{Z}^{\{M\}}_{\{A\}} := \varprojlim_{\lambda \rightarrow \infty} \varinjlim_{h \rightarrow \infty}  \mathcal{Z}^{M,h}_{A,\lambda}. \]
Then, $\ZZ^{[M]}_{[A]}$ is a $(PLB)$-space.  

\begin{lemma}\label{ZS-red}
Let $M$ and $A$ be two weight sequences. Suppose that $M$ satisfies $(M.2)$ and $(M.2)^*$. Then,
$\mathcal{Z}^{[M]}_{[A]} = \mathcal{Z}^{[\omega_M]}_{[\V_A]}$ as locally convex spaces. 
\end{lemma}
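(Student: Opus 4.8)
The plan is to argue exactly as for Lemma \ref{GS-red}: the identity is, at bottom, nothing but Lemma \ref{p:M2*equivBMT} applied termwise, together with the (here completely transparent) observation that the spatial weight appearing in the two scales is \emph{literally the same function}. First I would record that, because $M$ satisfies $(M.2)$ and $(M.2)^{*}$, Lemma \ref{p:M2*equivBMT} is available: $\omega_M$ is a weight function (so that $\mathcal{Z}^{[\omega_M]}_{[\V_A]}$ is well defined), and the inequalities \eqref{ineq-1} and \eqref{ineq-2} hold in all four quantifier arrangements $(i)$--$(iv)$.

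Next I would unwind the definitions. Writing $v_\lambda = e^{\omega_A(\,\cdot\,/\lambda)} \in \V_A$, so that $1/v_\lambda(x) = e^{-\omega_A(|x|/\lambda)}$, the building block $\mathcal{S}^{\omega_M,h}_{1/v_\lambda}$ of $\mathcal{Z}^{[\omega_M]}_{[\V_A]}$ carries the norm
\[ \varphi \longmapsto \sup_{\alpha \in \N^d}\sup_{x \in \R^d} |\varphi^{(\alpha)}(x)|\, e^{-\omega_A(|x|/\lambda)}\exp\!\Bigl(-\tfrac1h\phi^{*}_{M}(h|\alpha|)\Bigr), \]
which is precisely the norm of $\mathcal{Z}^{M,h}_{A,\lambda}$ with the scalar factor $\exp(-\tfrac1h\phi^{*}_{M}(h|\alpha|))$ replaced by $h^{-|\alpha|}M_{|\alpha|}^{-1}$. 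Since the factor $e^{-\omega_A(|x|/\lambda)}$ in the variable $x$ is identical in both scales, no change of the parameter $\lambda$ will ever be needed, and the whole matter reduces to interlacing the two $h$-scales.

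Concretely, in the Beurling case I would show: for every $h>0$ there is $h'>0$ such that, for every $\lambda>0$, the inclusion $\mathcal{Z}^{M,h'}_{A,\lambda}\hookrightarrow\mathcal{S}^{\omega_M,h}_{1/v_\lambda}$ is continuous — this amounts exactly to the estimate $(h')^{|\alpha|}M_{|\alpha|}\le C\exp(\tfrac1h\phi^{*}_{M}(h|\alpha|))$ furnished by Lemma \ref{p:M2*equivBMT}$(ii)$ — and, conversely, for every $h>0$ there is $h'>0$ with $\mathcal{S}^{\omega_M,h'}_{1/v_\lambda}\hookrightarrow\mathcal{Z}^{M,h}_{A,\lambda}$ continuous for every $\lambda$, which is the estimate $\exp(\tfrac1{h'}\phi^{*}_{M}(h'|\alpha|))\le Ch^{|\alpha|}M_{|\alpha|}$ from Lemma \ref{p:M2*equivBMT}$(i)$. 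Passing to $\varinjlim_{\lambda\to0^+}$ and then to $\varprojlim_{h\to0^+}$ then yields $\mathcal{Z}^{(M)}_{(A)}=\mathcal{Z}^{(\omega_M)}_{(\V_A)}$ as locally convex spaces. The Roumieu case is handled in the same way, the two inner inclusions between the $h$-scales over which the inductive limit runs now being supplied by parts $(iv)$ and $(iii)$ of Lemma \ref{p:M2*equivBMT}, after which one takes $\varprojlim_{\lambda\to\infty}$.

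I do not expect any real obstacle here: all the genuine content sits in Lemma \ref{p:M2*equivBMT}, the passage to the inductive/projective limits is the routine fact that two $(PLB)$-spaces with mutually interlaced defining spectra coincide as locally convex spaces, and the only point demanding a moment's care is the bookkeeping — matching each of the four parts of Lemma \ref{p:M2*equivBMT} to the correct direction of inclusion in the Beurling versus the Roumieu setting.
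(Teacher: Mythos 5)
Your proposal is correct and follows essentially the same route as the paper, whose proof of Lemma \ref{ZS-red} consists precisely of invoking Lemma \ref{p:M2*equivBMT}; you have simply spelled out the interlacing of the $h$-scales (parts $(i)$--$(iv)$) and correctly observed that, unlike in Lemma \ref{GS-red}, no comparison of spatial weights is needed because $\mathcal{Z}^{M,h}_{A,\lambda}$ already uses $e^{-\omega_A(|x|/\lambda)} = 1/v_\lambda(x)$.
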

\begin{proof}
This follows from Lemma \ref{p:M2*equivBMT}. 
\end{proof}

\begin{theorem} \label{UB-2}
Let $M$ be a weight sequence satisfying $(M.2)$ and $(M.2)^*$ and let $A$ be a weight sequence satisfying $(M.2)$. Suppose that $\mathcal{S}^{[M]}_{[A]} \neq \{0\}$.
Then,  $\mathcal{O}_{M}( \mathcal{S}^{[M]}_{[A]}) = \ZZ^{[M]}_{[A]}$ as locally convex spaces. Moreover,
\begin{itemize}
\item[$(i)$] $\mathcal{O}_{M}( \mathcal{S}^{(M)}_{(A)})$ is ultrabornological.
\item[$(ii)$] If $A$ satisfies \eqref{Omega-seq}, then $\mathcal{O}_{M}( \mathcal{S}^{\{M\}}_{\{A\}})$ is ultrabornological. If $\mathcal{S}^{\{M\}}_{\{A\}}$ is Gabor accessible, the converse holds true as well.
\end{itemize} 
\end{theorem}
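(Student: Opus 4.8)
The plan is to reduce everything to the weight--function setting already handled by Theorems \ref{main-2} and \ref{t:ZSpTop}, and then read off the outcome from Lemma \ref{cond-seq}. The whole point is that, since $M$ satisfies $(M.2)$ and $(M.2)^{*}$, Lemma \ref{p:M2*equivBMT} guarantees that $\omega_{M}$ is a weight function, so the spaces $\mathcal{S}^{[M]}_{[A]}$ and $\ZZ^{[M]}_{[A]}$ do fall within the scope of our main results once we identify them with $\mathcal{S}^{[\omega_{M}]}_{[\V_{A}]}$ and $\ZZ^{[\omega_{M}]}_{[\V_{A}]}$, respectively.

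First I would record the identities $\mathcal{S}^{[M]}_{[A]} = \mathcal{S}^{[\omega_{M}]}_{[\V_{A}]}$ and $\ZZ^{[M]}_{[A]} = \ZZ^{[\omega_{M}]}_{[\V_{A}]}$ as locally convex spaces, which are precisely Lemmas \ref{GS-red} and \ref{ZS-red}. Next I would check that the weight system $\V_{A}$ meets the standing hypotheses of Theorems \ref{main-2} and \ref{t:ZSpTop}: by Lemma \ref{cond-seq}$(i)$ it satisfies $[\condM]$; since $(M.2)$ implies $(M.2)'$, Lemma \ref{cond-seq}$(ii)$ gives $[\condN]$; and Lemma \ref{cond-seq}$(iii)$ gives $[\condSquare]$ because $A$ satisfies $(M.2)$. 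As $\mathcal{S}^{[\omega_{M}]}_{[\V_{A}]} = \mathcal{S}^{[M]}_{[A]} \neq \{0\}$ by hypothesis, Theorem \ref{main-2} then yields $\mathcal{O}_{M}(\mathcal{S}^{[\omega_{M}]}_{[\V_{A}]}) = \ZZ^{[\omega_{M}]}_{[\V_{A}]}$, and combining this with the two identifications gives the asserted topological identity $\mathcal{O}_{M}(\mathcal{S}^{[M]}_{[A]}) = \ZZ^{[M]}_{[A]}$.

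It then remains to deduce $(i)$ and $(ii)$ from Theorem \ref{t:ZSpTop} applied to $\omega = \omega_{M}$ and $\V = \V_{A}$. For $(i)$, Lemma \ref{cond-seq}$(iv)$ says that $\V_{A}$ always satisfies $(\condDN)$, so part $(a)$ of Theorem \ref{t:ZSpTop} in the Beurling case shows that $\ZZ^{(\omega_{M})}_{(\V_{A})} = \ZZ^{(M)}_{(A)} = \mathcal{O}_{M}(\mathcal{S}^{(M)}_{(A)})$ is ultrabornological. For $(ii)$, if $A$ satisfies \eqref{Omega-seq} then Lemma \ref{cond-seq}$(v)$ gives that $\V_{A}$ satisfies $(\condooOmega)$, and part $(a)$ of Theorem \ref{t:ZSpTop} in the Roumieu case shows that $\mathcal{O}_{M}(\mathcal{S}^{\{M\}}_{\{A\}})$ is ultrabornological. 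For the converse, assume that $\mathcal{S}^{\{M\}}_{\{A\}} = \mathcal{S}^{\{\omega_{M}\}}_{\{\V_{A}\}}$ is Gabor accessible and that $\mathcal{O}_{M}(\mathcal{S}^{\{M\}}_{\{A\}}) = \ZZ^{\{\omega_{M}\}}_{\{\V_{A}\}}$ is ultrabornological, hence barrelled; then part $(b)$ of Theorem \ref{t:ZSpTop} forces $\V_{A}$ to satisfy $(\condooOmega)$, and Lemma \ref{cond-seq}$(v)$ translates this back to \eqref{Omega-seq} for $A$.

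Since the argument is a chain of reductions, I do not expect a serious obstacle; the only point needing (minor) care is that non--triviality and Gabor accessibility of $\mathcal{S}^{\{M\}}_{\{A\}}$ are properties of the underlying locally convex space, so they transfer along the identification $\mathcal{S}^{[M]}_{[A]} = \mathcal{S}^{[\omega_{M}]}_{[\V_{A}]}$ for free. Explicit sufficient conditions for Gabor accessibility in this setting are furnished by Proposition \ref{GA-3}, and $\mathcal{S}^{[M]}_{[A]} \neq \{0\}$ holds e.g.\ when $p!^{r} \subset M$ and $p!^{s} \subset A$ for some $r+s > 1$ (resp.\ $\geq 1$) by Remark \ref{non-triviality}.
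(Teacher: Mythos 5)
Your proposal is correct and follows exactly the same route as the paper: reduce via Lemmas \ref{GS-red} and \ref{ZS-red} to the pair $(\omega_M,\V_A)$, verify the hypotheses $[\condM]$, $[\condN]$, $[\condSquare]$ and translate $(\condDN)$/$(\condooOmega)$ through Lemma \ref{cond-seq}, and then invoke Theorems \ref{main-2} and \ref{t:ZSpTop}. The paper's proof is just a terse citation of these same ingredients, so your write-up is simply a fuller spelling-out of the intended argument.
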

\begin{proof}
In view of Lemma \ref{GS-red} and Lemma \ref{ZS-red}, this follows from Lemma \ref{cond-seq}, Theorem \ref{t:ZSpTop} and Theorem \ref{main-2}.
\end{proof}
We refer to Proposition \ref{GA-3} for conditions on $M$ and $A$ which ensure that $\mathcal{S}^{\{M\}}_{\{A\}}$ is  Gabor accessible.
\begin{theorem}\label{UB-3}
Let $r,s > 0$ be such that $r+s > 1$ $(r+s \geq 1)$. Then, $\mathcal{O}_{M}( \Sigma^{r}_{s}) = \ZZ^{(p!^r)}_{(p!^s)}$  ($\mathcal{O}_{M}( \mathcal{S}^{r}_{s}) = \ZZ^{\{p!^r\}}_{\{p!^s\}}$) as locally convex spaces. Moreover,
\begin{itemize}
\item[$(i)$] $\mathcal{O}_{M}( \Sigma^{r}_{s})$ is ultrabornological.
\item[$(ii)$] If $\max(r, s) > 1$ or $\min(r, s) \geq 1/2$, then $\mathcal{O}_{M}( \mathcal{S}^{r}_{s})$ is not  ultrabornological.
\end{itemize}
\end{theorem}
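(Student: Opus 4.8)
The plan is to obtain Theorem \ref{UB-3} as a specialization of Theorem \ref{UB-2} to the Gevrey setting. Put $M = p!^r$ and $A = p!^s$, so that, by the definitions of the classical Gelfand--Shilov spaces and of the spaces in Section \ref{sect-main}, $\Sigma^r_s = \mathcal{S}^{(M)}_{(A)}$, $\mathcal{S}^r_s = \mathcal{S}^{\{M\}}_{\{A\}}$, and correspondingly $\ZZ^{(p!^r)}_{(p!^s)} = \ZZ^{(M)}_{(A)}$, $\ZZ^{\{p!^r\}}_{\{p!^s\}} = \ZZ^{\{M\}}_{\{A\}}$. The first step is to check that the hypotheses of Theorem \ref{UB-2} are satisfied: the Gevrey sequence $p!^r$ is a weight sequence satisfying $(M.2)$ and $(M.2)^{*}$, and $p!^s$ is a weight sequence satisfying $(M.2)$; moreover, by Remark \ref{non-triviality}, the standing assumption $r + s > 1$ (resp.\ $r+s \geq 1$) is precisely what guarantees $\mathcal{S}^{[M]}_{[A]} \neq \{0\}$. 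Theorem \ref{UB-2} then immediately yields the topological identity $\mathcal{O}_{M}(\mathcal{S}^{[M]}_{[A]}) = \ZZ^{[M]}_{[A]}$ --- which, rewritten in the classical notation, is the first assertion --- together with assertion $(i)$, namely that $\mathcal{O}_{M}(\Sigma^r_s) = \mathcal{O}_{M}(\mathcal{S}^{(M)}_{(A)})$ is ultrabornological.

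For assertion $(ii)$ I would argue as follows. Being a Gevrey sequence, $A = p!^s$ satisfies $(M.2)^{*}$, so by Example \ref{example} it does \emph{not} satisfy condition \eqref{Omega-seq}. On the other hand, Proposition \ref{GA-4} shows that if $\max(r,s) > 1$ or $\min(r,s) \geq 1/2$, then $\mathcal{S}^r_s = \mathcal{S}^{\{M\}}_{\{A\}}$ is Gabor accessible. Hence the converse direction in the second part of Theorem \ref{UB-2}$(ii)$ applies and gives that $\mathcal{O}_{M}(\mathcal{S}^r_s)$ is ultrabornological if and only if $A$ satisfies \eqref{Omega-seq}; since $A$ does not, $\mathcal{O}_{M}(\mathcal{S}^r_s)$ is not ultrabornological, as claimed.

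Since all the analytical content is already contained in Theorems \ref{t:ZSpTop}, \ref{main-2} and \ref{UB-2} and in Propositions \ref{GA-3}--\ref{GA-4}, there is no genuine obstacle to overcome; the proof is essentially bookkeeping. The only points that need to be stated with care are the matching of hypotheses --- that the non-triviality requirement of Theorem \ref{UB-2} is covered by $r+s > 1$ (resp.\ $\geq 1$), whereas the sharper inequality appearing in $(ii)$ enters only through Proposition \ref{GA-4} to secure Gabor accessibility --- and the observation that, for a Gevrey sequence $A$, the failure of \eqref{Omega-seq} is an immediate consequence of $(M.2)^{*}$ via Example \ref{example}.
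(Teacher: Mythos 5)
Your proposal is correct and follows essentially the same route as the paper, which deduces Theorem \ref{UB-3} directly from Theorem \ref{UB-2} together with Proposition \ref{GA-3} (of which Proposition \ref{GA-4} is the Gevrey specialization you invoke). The bookkeeping you spell out --- $p!^r$, $p!^s$ satisfy $(M.2)$ and $(M.2)^*$, non-triviality from $r+s>1$ (resp.\ $\geq 1$) via Remark \ref{non-triviality}, and failure of \eqref{Omega-seq} for $p!^s$ via Example \ref{example} --- is exactly what the paper's one-line proof implicitly relies on.
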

\begin{proof}
This follows from  Proposition \ref{GA-3} and Theorem \ref{UB-2}.
\end{proof}
\section{Weighted $(PLB)$-spaces of continuous functions}\label{sect-cont}
Let $X$ be a topological space.  A double sequence $\A = \{ a_{N, n} ~  | ~  N,n \in \N \}$  consisting of continuous functions $a_{N,n}: X \rightarrow (0,\infty)$ is called a \emph{weight grid on X}  if  $a_{N,n+1}(x) \leq a_{N,n}(x) \leq a_{N+1,n}(x)$
for all $x \in X$ and $N,n \in \N$. Following \cite{A-B-B-ProjLimWeighLBSpContFunc}, we introduce the following two conditions:

	\begin{definition}
		A weight   grid $\A$ on $X$ is said to satisfy condition $\condQ$ if
			\begin{gather*}
				\forall N  ~ \exists M \geq N ~ \exists n ~ \forall K \geq M
				~\forall m  \geq n ~ \forall \varepsilon > 0 ~ \exists k \geq m  ~ \exists C > 0 ~ \forall x \in X ~  : \\
				\frac{1}{a_{M,m}(x)} \leq \frac{\varepsilon}{a_{N,n}(x)} + \frac{C}{a_{K,k}(x)}. 
			\end{gather*}
		If ``$\forall \varepsilon > 0$" is replaced by ``$\exists \varepsilon > 0$", then $\A$ is said to satisfy condition $\condwQ$.
	\end{definition}
For a continuous function $v : X \rightarrow (0,\infty)$ we denote by $C_v(X)$ the Banach space consisting of all $f \in C(X)$ such that $\| f \|_v = \sup_{x \in X} |f(x)| v(x) < \infty$. Given a weight grid $\A$ on $X$, we define the $(PLB)$-space
	\[ \A C(X) := \varprojlim_{N \in \N} \varinjlim_{n \in \N} C_{a_{N,n}}(X) . \]
We now give two results from \cite{A-B-B-ProjLimWeighLBSpContFunc} that will play an essential role in the proof of Theorem \ref{t:ZSpTop}.

	\begin{theorem}\cite[Theorem 3.5]{A-B-B-ProjLimWeighLBSpContFunc}
		\label{t:QImplyUltraBorn}
		Let $\A$ be a weight grid on $X$. If $\A$ satisfies $\condQ$, then $\A C(X)$ is ultrabornological.
	\end{theorem}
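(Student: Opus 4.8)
The plan is to view $E := \A C(X) = \varprojlim_{N} \mathcal{X}_N$, where $\mathcal{X}_N := \varinjlim_{n} C_{a_{N,n}}(X)$ is an $(LB)$-space (hence ultrabornological and webbed), as a reduced projective limit of $(LB)$-spaces, and then to apply the homological theory of $(PLB)$-spaces \cite{Wengenroth}. For such a spectrum, ultrabornologicity of the projective limit is ensured by a suitably strong Mittag--Leffler-type criterion on the fundamental systems of bounded sets of the steps; its weaker version (equivalent to $\operatorname{Proj}^1 \mathcal{X} = 0$ and corresponding to condition $\condwQ$) yields only barrelledness. Writing $B_{N,n} := \{ f \in C(X) : |f(x)|\, a_{N,n}(x) \leq 1 \text{ for all } x \in X \}$ for the closed unit ball of $C_{a_{N,n}}(X)$, the condition to be verified reads: for every $N$ there are $M \geq N$ and $n$ such that for all $K \geq M$, all $m \geq n$ and all $\varepsilon > 0$ there are $k \geq m$ and $C > 0$ with
\[ B_{M,m} \subseteq \varepsilon\, B_{N,n} + C\, B_{K,k} . \]
Everything then reduces to deriving this inclusion of balls from condition $\condQ$.

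The derivation is an explicit splitting of functions. Given $f \in B_{M,m}$ and $\varepsilon > 0$, put $\chi := \min\{ 1, \varepsilon\, a_{M,m}/a_{N,n}\} \in C(X)$ and write $f = \chi f + (1-\chi) f =: g + h$. From $|f| \leq 1/a_{M,m}$ one obtains $|g(x)| \leq \varepsilon\, a_{M,m}(x)\, |f(x)|/a_{N,n}(x) \leq \varepsilon/a_{N,n}(x)$, so that $g \in \varepsilon\, B_{N,n}$; moreover $h$ vanishes on the set $\{ \varepsilon\, a_{M,m} \geq a_{N,n} \}$, while on its complement $|h(x)| \leq (1 - \varepsilon\, a_{M,m}(x)/a_{N,n}(x))/a_{M,m}(x) = 1/a_{M,m}(x) - \varepsilon/a_{N,n}(x) \leq C/a_{K,k}(x)$ by $\condQ$, so that $h \in C\, B_{K,k}$. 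The indices $M$ and $n$ and the choices of $k$ and $C$ are precisely those supplied by $\condQ$, whose quantifier pattern matches the one in the criterion line for line.

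The main obstacle is not this elementary splitting but the two homological reductions that flank it. First, one must confirm that the abstract criterion — which is naturally formulated through the Fréchet duals $\mathcal{X}_N' = \varprojlim_{n} (C_{a_{N,n}}(X))'$ and involves bipolars and weak-$*$ closures of the sets $B_{K,k}$ — is genuinely equivalent to the \emph{pointwise} inclusion displayed above; here one exploits that in each $C_{a_{N,n}}(X)$ the balls $B_{N,n}$ are closed for the topology of pointwise convergence on $X$, so that the closures appearing in the abstract condition can be removed without loss. Second, one must pass from this (strengthened) Mittag--Leffler condition to the assertion that the reduced projective limit $E$ is \emph{ultrabornological}, and not merely barrelled or complete; this is where the full force of the ``$\forall \varepsilon$'' in $\condQ$ is spent, via the structure theory of $(PLB)$-spaces together with De Wilde-type open mapping arguments. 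Pinning down precisely which quantitative feature of the splitting upgrades barrelledness to ultrabornologicity is, I expect, the delicate point of the argument.
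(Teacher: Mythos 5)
The paper itself contains no proof of this statement: it is imported verbatim from \cite[Theorem 3.5]{A-B-B-ProjLimWeighLBSpContFunc}, so there is nothing internal to compare with, and your outline is indeed essentially the route taken in that reference — one verifies, for the spectrum of $(LB)$-spaces $\mathcal{X}_N=\varinjlim_n C_{a_{N,n}}(X)$, the ball-inclusion form of $\condQ$, namely $B_{M,m}\subseteq \varepsilon B_{N,n}+C\,B_{K,k}$ with exactly the quantifier pattern of $\condQ$, and then appeals to an abstract theorem on projective spectra of $(LB)$-spaces (going back to Vogt; cf.\ \cite{Wengenroth} and the references in \cite{A-B-B-ProjLimWeighLBSpContFunc}) asserting that this condition forces the projective limit to be ultrabornological. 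Your cut-off decomposition $f=\chi f+(1-\chi)f$ with $\chi=\min\{1,\varepsilon a_{M,m}/a_{N,n}\}$ is correct: $\chi$ is continuous, $|\chi f|\leq\varepsilon/a_{N,n}$, and on the set where $\chi<1$ one has $|(1-\chi)f|\leq 1/a_{M,m}-\varepsilon/a_{N,n}\leq C/a_{K,k}$ by $\condQ$. Note also that your first ``obstacle'' is not one in the direction you need: the pointwise inclusion you establish is \emph{stronger} than any variant with closed or bipolar right-hand sides, so no argument about pointwise closedness of the balls is required for sufficiency (such considerations matter only for converse statements of the type of Theorem \ref{t:BarrelImplywQ}).

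The genuine gap is the second reduction, which you leave entirely open. The whole non-trivial content of the theorem is the abstract implication ``ball condition with $\forall\varepsilon$ $\Rightarrow$ $\operatorname{Proj}\mathcal{X}$ ultrabornological'' (the $\exists\varepsilon$ version only yields $\operatorname{Proj}^1\mathcal{X}=0$ and barrelledness-type conclusions); the concrete verification via the splitting is routine by comparison. You neither state this abstract criterion in a precise form nor prove it — you gesture at ``the structure theory of $(PLB)$-spaces together with De Wilde-type open mapping arguments'' and explicitly concede that you do not know which feature of the argument upgrades barrelledness to ultrabornologicity. As a self-contained proof the proposal therefore establishes only the easy half and assumes the hard half. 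It becomes a complete (and standard) argument once you cite the abstract result precisely, which is exactly how \cite{A-B-B-ProjLimWeighLBSpContFunc} proceed; if instead you intend to prove that abstract step yourself, the missing idea (a Baire-type/De Wilde argument showing that every Banach disc of the limit is absorbed appropriately, using the $\varepsilon$-decompositions to sum a series converging in a fixed step) still has to be supplied.
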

	
	\begin{theorem}\cite[Theorem 3.8(2)]{A-B-B-ProjLimWeighLBSpContFunc}
		\label{t:BarrelImplywQ}
		Let $\A$ be a weight  grid on $X$. If $\A C(X)$ is barrelled, then $\A$ satisfies $\condwQ$. 
	\end{theorem}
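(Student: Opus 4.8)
The plan is to prove the contrapositive: assuming $\A$ fails $\condwQ$, I would exhibit a subset of the dual $(\A C(X))'$ that is $\sigma((\A C(X))', \A C(X))$-bounded but not equicontinuous, so that $\A C(X)$ cannot be barrelled (recall that a locally convex space is barrelled precisely when every weakly bounded subset of its dual is equicontinuous). Throughout I write $E_N = \varinjlim_n C_{a_{N,n}}(X)$, so that $\A C(X) = \varprojlim_N E_N$ and, algebraically, $(\A C(X))' = \varinjlim_N E_N'$. The whole argument is carried by the point evaluations $\delta_x$, $x \in X$: each $\delta_x$ is continuous on $\A C(X)$ and has norm $1/a_{N,n}(x)$ in $C_{a_{N,n}}(X)'$.

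First I would record the two dual reformulations for a family $\{\lambda_x \delta_x : x \in S\}$ with $\lambda_x > 0$. Using the standard description of a $0$-neighbourhood basis of the $(LB)$-space $E_M$ (absolutely convex hulls of $\bigcup_n \rho_n B_n$, $\rho_n > 0$, with $B_n$ the unit ball of $C_{a_{M,n}}(X)$) together with $(\A C(X))' = \varinjlim_M E_M'$, such a family is \emph{equicontinuous} if and only if there is $M$ with $\sup_{x \in S}\lambda_x/a_{M,n}(x) < \infty$ for \emph{every} $n$. On the other hand it is \emph{weakly bounded} if and only if $\sup_{x\in S}\lambda_x|f(x)| < \infty$ for every $f\in\A C(X)$, and here the crucial elementary fact is that each $f \in E_N = \bigcup_n C_{a_{N,n}}(X)$ already lies in some step, i.e.\ $|f(x)| \le c_{f,N}/a_{N,q}(x)$ for a suitable $q = q(f,N)$.

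Next I would negate $\condwQ$: there is $N_0$ such that for every $M \ge N_0$ and every $n$ one finds $K \ge M$ and $m \ge n$ for which the inequality $1/a_{M,m}(x) \le \varepsilon/a_{N_0,n}(x) + C/a_{K,k}(x)$ can be violated for arbitrarily large $\varepsilon, C$ and arbitrarily large $k \ge m$. For each level $M \ge N_0$ I fix an index $n_M$ with $n_M \to \infty$ (say $n_M = M$), obtain the associated $K_M, m_M$, and for each $i \in \N$ choose, via the negation with $\varepsilon = C = i$ and $k = \max(i, m_M)$, a witness $x_{M,i}$ with
\[ \frac{1}{a_{M,m_M}(x_{M,i})} > \frac{i}{a_{N_0,n_M}(x_{M,i})} + \frac{i}{a_{K_M, k}(x_{M,i})}. \]
I then set $\lambda_{M,i} = \sqrt{i}\, a_{M,m_M}(x_{M,i})$ and take $H = \{\lambda_{M,i}\delta_{x_{M,i}}\}$.

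Non-equicontinuity is immediate: for the candidate level $M$ the index $m_M$ gives $\lambda_{M,i}/a_{M,m_M}(x_{M,i}) = \sqrt i \to \infty$, and a candidate level below $N_0$ is handled by comparison with level $N_0$ (where $a_{M^*,q}\le a_{N_0,q}$); hence no $M$ can serve in the equicontinuity criterion. The heart of the proof, and the step I expect to be the main obstacle, is verifying weak boundedness, $\sup_{M,i}\lambda_{M,i}|f(x_{M,i})| < \infty$ for fixed but arbitrary $f\in\A C(X)$, and this is exactly where the two terms of $\condwQ$ must be balanced. Writing $q_f$ for the step of $f$ at level $N_0$: for the infinitely many $M$ with $n_M \ge q_f$ I use the first term, $|f| \le c_f/a_{N_0,q_f} \le c_f/a_{N_0,n_M}$, together with $a_{M,m_M}/a_{N_0,n_M} < 1/i$ read off the $N_0$-summand, giving the bound $\lambda_{M,i}|f(x_{M,i})| < c_f/\sqrt i$ \emph{uniformly} in these high levels; for the remaining finitely many $M$ with $n_M < q_f$ I instead use the second term, $|f| \le c'_{f,M}/a_{K_M,k}$ for $k = k(M,i)$ large, together with $a_{M,m_M}/a_{K_M,k} < 1/i$, giving $\lambda_{M,i}|f(x_{M,i})| < c'_{f,M}/\sqrt i$ for each of these finitely many $M$. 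The delicate points to get right are the monotonicity bookkeeping in the two indices of the grid (so that the single step of $f$ dominates at the chosen indices) and the diagonal choice $n_M \to \infty$, which is precisely what makes the first-term estimate uniform over the infinitely many high levels while leaving only finitely many low levels for the second-term estimate. Taking the supremum over both regimes yields a finite bound, so $H$ is weakly bounded; being non-equicontinuous, it witnesses that $\A C(X)$ is not barrelled, which proves the claim.
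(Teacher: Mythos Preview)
The paper does not prove this statement at all: Theorem~\ref{t:BarrelImplywQ} is quoted verbatim from \cite[Theorem 3.8(2)]{A-B-B-ProjLimWeighLBSpContFunc} and used as a black box in the proof of part $(b)$ of Theorem~\ref{t:ZSpTop}. So there is no ``paper's own proof'' to compare against; what you have written is a reconstruction of the argument from the cited reference.

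Your argument is essentially the standard one and is correct in outline. The contrapositive strategy via a $\sigma((\A C(X))',\A C(X))$-bounded but non-equicontinuous family of scaled point evaluations is exactly how this type of result is proved, and your bookkeeping with the two summands of $\condwQ$ (the diagonal choice $n_M\to\infty$ handling all but finitely many levels uniformly, the $K_M$-term handling the remaining levels individually) is the right mechanism. Two small points deserve a remark. First, your equivalence ``$\{\lambda_x\delta_x\}$ equicontinuous $\Leftrightarrow$ $\exists M\ \forall n\ \sup_x \lambda_x/a_{M,n}(x)<\infty$'' needs, for the forward implication, that one can realise the dual norm $1/a_{M,n}(x)$ of $\delta_x$ by functions lying in $\A C(X)$, not merely in $C_{a_{M,n}}(X)$; this uses a mild hypothesis on $X$ (local compactness, as in \cite{A-B-B-ProjLimWeighLBSpContFunc}) so that suitably supported peak functions belong to every step. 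In the paper the theorem is only applied with $X$ a lattice in $\R^{2d}$, where this is trivial. Second, in the ``finitely many low levels'' part you should make explicit that for each such $M$ only finitely many $i$ have $k(M,i)<q'_{f,M}$, so the residual terms are a finite set of finite numbers; you say this implicitly but it is worth spelling out.
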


Let $X$ and $Y$ be two topological spaces. Let $\V$ be a weight system on $X$ and let $\W$ be a weight system on $Y$. We define the following weight  grids on $X \times Y$
$$
\A_{(\V,\W)} := \left \{ \frac{v_{1/N}}{w_{1/n}} ~  | ~  N,n \in \N \right\}, \qquad \A_{\{\V,\W\}} := \left \{ \frac{w_{n}}{v_{N}} ~  | ~  N,n \in \N \right\}.
$$
The following result is inspired by \cite[Theorem 4.2 and Theorem 4.3]{V-FuncFSp}.
\begin{lemma}
		\label{l:EquivWeighFuncGridCond}
		Let $\omega$ be a weight function and  let $\V$ be a weight system on a topological space $X$. Then,			\begin{itemize}
				\item[$(a)$]  The following statements are equivalent:
					\begin{itemize}
						\item[$(i)$] $\A_{(\V_\omega,\V)}$ on $\R^d \times X$ satisfies $\condQ$.
						\item[$(ii)$] $\A_{(\V_\omega,\V)}$ on $\R^d \times X$ satisfies $\condwQ$.
						\item[$(iii)$] $\V$ satisfies $(\condDN)$.
					\end{itemize}
				\item[$(b)$]  The following statements are equivalent:
				\begin{itemize}
						\item[$(i)$] $\A_{\{\V,\V_\omega\}}$ on $X \times \R^d$ satisfies $\condQ$.
						\item[$(ii)$] $\A_{\{\V,\V_\omega\}}$ on $X \times \R^d$ satisfies $\condwQ$.
						\item[$(iii)$] $\V$ satisfies $(\condooOmega)$.
					\end{itemize}					
			\end{itemize}
	\end{lemma}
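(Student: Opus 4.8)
The plan is to prove part $(a)$; part $(b)$ is entirely analogous (one replaces the projective variable $1/N$ by $N$, swaps the roles of the small and large parameters, and uses the Roumieu-type quantifier ordering), so I would only indicate the changes at the end. For part $(a)$, the implication $(i)\Rightarrow(ii)$ is immediate from the definitions, since $\condQ$ is just $\condwQ$ with ``$\forall\varepsilon>0$'' in place of ``$\exists\varepsilon>0$''. Thus the real content is the circle $(iii)\Rightarrow(i)$ and $(ii)\Rightarrow(iii)$, and since $(i)\Rightarrow(ii)$ is free, it suffices to prove $(iii)\Rightarrow(i)$ and $(ii)\Rightarrow(iii)$.

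For $(iii)\Rightarrow(i)$: write out $\condQ$ for the grid $\A_{(\V_\omega,\V)}=\{\,e^{\frac1N\omega}/v_{1/n}\mid N,n\in\N\,\}$ on $\R^d\times X$. A point of $\R^d\times X$ is a pair $(t,x)$, and the reciprocal weight is $a_{N,n}(t,x)^{-1}=e^{-\frac1N\omega(t)}v_{1/n}(x)$. So the inequality to be verified has the shape
\[
e^{-\frac1M\omega(t)}\,v_{1/m}(x)\;\leq\;\varepsilon\, e^{-\frac1N\omega(t)}\,v_{1/n}(x)\;+\;C\, e^{-\frac1K\omega(t)}\,v_{1/k}(x).
\]
The idea is to separate the variables. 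Given $N$, use $(\condDN)$ (in the Beurling reading) to pick the distinguished index $\lambda$ and, for the various $\mu\le\lambda$, $\theta\in(0,1)$, a corresponding $\nu\le\mu$; translate this into indices $n,m,k$ for the grid. Then split $\R^d$ into the region where $\omega(t)$ is small and the region where it is large: on the ``small $\omega$'' region the factors $e^{-\frac1\bullet\omega(t)}$ are comparable to $1$ and one uses the interpolation inequality $v_{1/m}\le C v_{1/n}^\theta v_{1/k}^{1-\theta}$ together with Young's inequality $ab\le\theta a^{1/\theta}+(1-\theta)b^{1/(1-\theta)}$ to produce the two-term bound with any prescribed $\varepsilon$; on the ``large $\omega$'' region the gain comes from the strict inequality $M>N$ between the projective indices, which makes $e^{-\frac1M\omega(t)}/e^{-\frac1N\omega(t)}=e^{-(\frac1M-\frac1N)\omega(t)}$ (note $\frac1M<\frac1N$, so this is $e^{+(\text{positive})\omega(t)}$ — one must be careful with signs and instead compare against the $K$-term, exploiting $K\ge M$) small, absorbing $v_{1/m}/v_{1/k}$ which is controlled by monotonicity of $\V$. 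Matching the quantifier alternation of $(\condDN)$ with that of $\condQ$ is the bookkeeping heart of this direction.

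For $(ii)\Rightarrow(iii)$: assume $\condwQ$ holds for $\A_{(\V_\omega,\V)}$ with some fixed $\varepsilon_0>0$ (and fixed $N$, $M$, $n$). Evaluate the grid inequality at points $(t,x)$ with $|t|$ chosen, for each $x$, so that $e^{-\frac1M\omega(t)}$ sits at a prescribed level relative to $e^{-\frac1N\omega(t)}$ and $e^{-\frac1K\omega(t)}$ — this is possible because $\omega$ is a continuous surjection onto $[0,\infty)$ by properties $(\gamma)$ and $\omega(0)=0$. A judicious such choice turns the three-weight inequality into an inequality purely among $v_{1/n}(x)$, $v_{1/m}(x)$, $v_{1/k}(x)$ of exactly the interpolation form in $(\condDN)$, with $\theta$ determined by the ratios of the indices $N,M,K$; letting $K\to\infty$ (equivalently, its reciprocal to $0$) lets $\theta$ range over all of $(0,1)$, which is precisely the quantifier ``$\forall\theta\in(0,1)$'' appearing in $(\condDN)$. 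The passage from a single $\varepsilon_0$ in $\condwQ$ to arbitrary $\theta$ is exactly why $\condwQ$ — not just $\condQ$ — already forces $(\condDN)$, and this is the step I expect to be the main obstacle: one must verify that the seemingly weaker ``$\exists\varepsilon$'' hypothesis is not actually weaker here, which relies on the exponential scale of $\V_\omega$ providing an unbounded family of interpolation exponents. Once this is in place, part $(b)$ follows by the same argument with the order of quantifiers reversed and $\V_\omega$ placed in the second factor, matching $(\condooOmega)$ instead of $(\condDN)$.
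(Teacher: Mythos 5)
Your treatment of $(ii)\Rightarrow(iii)$ is essentially sound and close to the paper's: freeze $x$, use that $e^{\omega(t)}$ fills $[1,\infty)$ (the values in $(0,1)$ are trivial because $v_{1/k}\geq v_{1/m}$), and minimize the right-hand side in this variable to convert the additive three-weight inequality into a multiplicative interpolation inequality; the fixed $\varepsilon_{0}$ is simply absorbed into the constant, which is the real reason $\condwQ$ suffices, not an ``unbounded family of exponents''. Where you deviate is in producing all $\theta\in(0,1)$: the paper gets a single $\theta$ and upgrades by an iteration argument, while you vary $K$. That variant works, but be aware the exponents you actually obtain are only $\theta_{K}=(K-M)/(K-N)$, a sequence increasing to $1$, so you must add the (easy) remark that the inequality for a given $\theta$ implies it for every smaller $\theta'$ because $v_{1/n}\leq v_{1/k}$.

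The genuine gap is in $(iii)\Rightarrow(i)$, and it starts with a misindexing: since $\V_{\omega}=\{e^{\frac{1}{\lambda}\omega}\}$, the grid weight is $a_{N,n}(t,x)=e^{N\omega(t)}/v_{1/n}(x)$, not $e^{\frac{1}{N}\omega(t)}/v_{1/n}(x)$ (your version is decreasing in $N$, so it is not even a weight grid), and this is what produced your sign confusion. With the correct weights your region-splitting strategy fails: on the set where $\omega(t)$ is large you cannot absorb $e^{-M\omega(t)}v_{1/m}(x)$ into the $K$-term, because $K\geq M$ gives $e^{-K\omega(t)}\leq e^{-M\omega(t)}$ there and monotonicity only yields $v_{1/m}\leq v_{1/k}$, which cannot compensate the unbounded factor $e^{(K-M)\omega(t)}$; nor into the $N$-term, since $v_{1/m}/v_{1/n}$ is unbounded. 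One is thus forced back to the interpolation inequality with a precisely chosen exponent, and that is exactly the step you defer as ``bookkeeping''. The actual argument needs no splitting at all: take $M=N+1$ and, for given $K\geq M$, choose $\theta=(K-N-1)/(K-N)$ so that $M=\theta N+(1-\theta)K$, hence $e^{-M\omega}=(e^{-N\omega})^{\theta}(e^{-K\omega})^{1-\theta}$; combining this with $v_{1/m}\leq Cv_{1/n}^{\theta}v_{1/k}^{1-\theta}$ from $(\condDN)$ and the scaled estimate $a^{\theta}b^{1-\theta}\leq \varepsilon a+(C\varepsilon^{-\theta})^{1/(1-\theta)}b$ gives $\condQ$ globally, with the arbitrary $\varepsilon$ coming from this rescaling. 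Without identifying this choice of $\theta$ as a function of $N$ and $K$ (which is where the quantifier ``$\forall\theta$'' of $(\condDN)$ is genuinely used), the implication is not proved.
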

\begin{proof}
We only show $(a)$ as $(b)$ can be shown similarly. The implication $(i) \Rightarrow (ii)$ is trivial. Next, we show $(ii) \Rightarrow (iii)$. Condition $\condwQ$ implies that there exists $H > 0$ such that
			\begin{gather*}
				  \exists \lambda ~ \forall \mu \leq \lambda ~ \exists \nu \leq \mu ~ \exists C > 0  ~ \forall x \in X ~ \forall t\geq 0 ~  : ~  v_{\mu}(x) \leq C\left(v_{\lambda}(x)e^{H\omega(t)} + \frac{v_{\nu}(x)}{e^{\omega(t)}}\right).
			\end{gather*}
Since $\omega(0) = 0$, $\omega$ is continuous and $\omega(t) \rightarrow \infty$ as $t \to \infty$, we obtain that  
\begin{gather*}
				  \exists \lambda ~ \forall \mu \leq \lambda ~ \exists \nu \leq \mu ~ \exists C > 0  ~ \forall x \in X ~ \forall r > 0 ~  : ~  v_{\mu}(x) \leq C\left(v_{\lambda}(x)r^H + \frac{v_{\nu}(x)}{r} \right).
			\end{gather*} 
			By calculating the minimum for $r > 0$ (with $x \in X$ fixed) of the right-hand side of the above inequality, we find that
\begin{gather*}
				 \exists \theta \in (0,1) ~  \exists \lambda ~ \forall \mu \leq \lambda ~ \exists \nu \leq \mu ~ \exists C > 0  ~  \forall x \in X ~  : ~  v_{\mu}(x) \leq Cv^{\theta}_{\lambda}(x)v^{1-\theta}_{\nu}(x).
\end{gather*}	
An induction argument now shows that $\V$ satisfies $(\condDN)$. Finally, we show $(iii) \Rightarrow (i)$. Let $N \in \N$ be arbitrary and set $M = N+1$. Since $\V$ satisfies $(\condDN)$, there is $n \in \N$ such that
\begin{equation}
\forall m \geq n  ~ \forall \theta \in (0,1) ~ \exists k \geq m  ~ \exists C > 0 ~\forall x \in X ~: ~ v_{1/m}(x) \leq Cv^\theta_{1/n}(x)v^{1-\theta}_{1/k}(x). 
\label{DN-proof}
\end{equation}
Let $K > M$, $m \geq n$ and $\varepsilon > 0$ be arbitrary. Set $\theta = (K-N-1)/(K-N) \in (0,1)$ and note that $M = \theta N + (1-\theta)K$. Choose $k$ and $C$ as in \eqref{DN-proof}. Then,
\begin{align*}
\frac{v_{1/m}(x)}{e^{M\omega(t)}} &\leq  \left(  \frac{\varepsilon v_{1/n}(x)}{e^{N\omega(t)}}  \right)^{\theta} \left(  \frac{(C\varepsilon^{-\theta})^{1/(1-\theta)}v_{1/k}(x)}{e^{K\omega(t)}}  \right)^{1-\theta} \\
&\leq \max \left \{ \varepsilon \frac{v_{1/n}(x)}{e^{N\omega(t)}} , (C\varepsilon^{-\theta})^{1/(1-\theta)} \frac{v_{1/k}(x)}{e^{K\omega(t)}} \right \} \\
&\leq   \frac{\varepsilon v_{1/n}(x)}{e^{N\omega(t)}}  +\frac{ (C\varepsilon^{-\theta})^{1/(1-\theta)} v_{1/k}(x)}{e^{K\omega(t)}},
\end{align*}
for all $t \geq 0$ and $x \in X$, whence  $\A_{(\V_\omega,\V)}$ satisfies $\condQ$.
\end{proof}	

\section{Proof of the main results}\label{sect-proofs}
The proof of part $(a)$ of Theorem \ref{t:ZSpTop} is based on the mapping properties of the STFT on  $\ZZ^{[\omega]}_{[\V]}$. We start with the following three general results:
\begin{lemma}\label{main-lemma-1} Let $\omega$ be a weight function. Let $v_i: \R^d \to (0,\infty)$, $i = 1,2,3,4$, be continuous functions such that
$$
v_2(x+t) \leq C_0v_1(x) \widetilde{v}_4(t), \qquad x,t \in \R^d,
$$
for some $C_0 > 0$ and $v_4/v_3 \in L^1$.  Let $h_i > 0$, $i= 1,2,3$, be such that
$$
\frac{1}{\max\{h_1,h_3\}} \phi^* ( \max\{h_1,h_3\}(y+1)) + (\log\sqrt{d}) y \leq \frac{1}{h_2} \phi^* (h_2y) 
+ \log C_1, \qquad y \geq 0, 
$$
for some $C_1 > 0$.  Let $\psi \in \mathcal{S}^{\omega,h_3}_{v_3}$. Then,   the mapping 
$$
V_\psi : \mathcal{S}^{\omega,h_1}_{v_1} \rightarrow C_{v_2 \otimes e^{\frac{1}{h_2}\omega}}(\R^{2d}_{x,\xi})
$$
is continuous.
\end{lemma}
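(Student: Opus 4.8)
The plan is a single direct estimate on $V_\psi\varphi$. Set $h:=\max\{h_1,h_3\}$ and start from $V_\psi\varphi(x,\xi)=\int_{\R^d}\varphi(t)\overline{\psi(t-x)}e^{-2\pi i\xi\cdot t}\,dt$. First I would record the auxiliary bound
\[
\int_{\R^d}\frac{dt}{v_1(t)v_3(t-x)}\;\le\;\frac{C_0\,\norm{v_4/v_3}_{L^1}}{v_2(x)},\qquad x\in\R^d,
\]
obtained by substituting $\frac{1}{v_1(t)}\le C_0\,\frac{\check v_4(x-t)}{v_2(x)}$ (which is the hypothesis $v_2(x)\le C_0v_1(t)\check v_4(x-t)$) and then the change of variables $s=t-x$. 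Since $\varphi\in\mathcal{S}^{\omega,h_1}_{v_1}$ and $\psi\in\mathcal{S}^{\omega,h_3}_{v_3}$ yield $|\varphi(t)\overline{\psi(t-x)}|\le C\,v_1(t)^{-1}v_3(t-x)^{-1}$, this bound shows the defining integral converges absolutely, and a routine verification (e.g.\ $V_\psi\varphi(x,\cdot)=\widehat{\varphi\,\overline{T_x\psi}}$ together with continuity of $x\mapsto\varphi\,\overline{T_x\psi}$ in $L^1$) gives $V_\psi\varphi\in C(\R^{2d})$. It then suffices to prove the norm estimate
\[
\sup_{(x,\xi)\in\R^{2d}}v_2(x)\,e^{\frac1{h_2}\omega(\xi)}\,|V_\psi\varphi(x,\xi)|\;\le\;C_0C_1\norm{v_4/v_3}_{L^1}\,\norm{\psi}_{\mathcal{S}^{\omega,h_3}_{v_3}}\,\norm{\varphi}_{\mathcal{S}^{\omega,h_1}_{v_1}},
\]
since the left side is $\norm{V_\psi\varphi}_{v_2\otimes e^{\frac1{h_2}\omega}}$ and $\varphi\mapsto V_\psi\varphi$ is linear.

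Next, I would fix $(x,\xi)$ and choose $j_0$ with $|\xi_{j_0}|=\max_{1\le j\le d}|\xi_j|$, so that $|\xi|\le\sqrt d\,|\xi_{j_0}|$. For each $n\in\N$, integrating by parts $n$ times in the single variable $t_{j_0}$ (the boundary terms vanish because the relevant $C^\infty$ integrand and its $t_{j_0}$-derivatives all lie in $L^1(\R^d_t)$) gives $(2\pi i\xi_{j_0})^nV_\psi\varphi(x,\xi)=\int_{\R^d}\partial_{t_{j_0}}^n[\varphi(t)\overline{\psi(t-x)}]\,e^{-2\pi i\xi\cdot t}\,dt$. Applying the one-dimensional Leibniz rule, bounding each factor via the defining inequalities of the two Banach spaces, using that $h\mapsto\phi^*(hy)/h$ is non-decreasing (a consequence of the stated monotonicity of $y\mapsto\phi^*(y)/y$) to replace $h_1,h_3$ by $h$, and using the superadditivity $\tfrac1h\phi^*(hk)+\tfrac1h\phi^*(h(n-k))\le\tfrac1h\phi^*(hn)$ (valid since $\phi^*$ is convex and $\phi^*(0)\le0$), together with $\sum_{k\le n}\binom nk=2^n$ and the auxiliary $L^1$-bound, I obtain for every $n\in\N$
\[
v_2(x)\,|V_\psi\varphi(x,\xi)|\;\le\;C_0\norm{v_4/v_3}_{L^1}\,\norm{\varphi}_{\mathcal{S}^{\omega,h_1}_{v_1}}\norm{\psi}_{\mathcal{S}^{\omega,h_3}_{v_3}}\;\frac{\exp\!\big(\tfrac1h\phi^*(hn)\big)}{(\pi|\xi_{j_0}|)^n},
\]
the $2^n$ having been absorbed into $(2\pi)^n$.

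Taking the infimum over $n$ turns this into a division by $\sup_{n\in\N}(\pi|\xi_{j_0}|)^n\exp(-\tfrac1h\phi^*(hn))$, so it remains to show that this supremum is $\ge C_1^{-1}e^{\frac1{h_2}\omega(\xi)}$ when $|\xi|\ge\sqrt d$ (for $|\xi|<\sqrt d$ the target estimate follows at once by taking $n=0$, since $\phi^*(0)\le0$ and $\omega(\xi)\le\omega(\sqrt d)$). Here I would start from the Legendre identity $e^{\frac1{h_2}\omega(s)}=\sup_{z\ge0}s^z\exp(-\tfrac1{h_2}\phi^*(h_2z))$, valid for $s\ge1$ because $(\phi^*)^*=\phi$ and $\omega(s)=\phi(\log s)$; apply it with $s=\sqrt d\,|\xi_{j_0}|\ge1$ and use $\omega(\xi)\le\omega(\sqrt d\,|\xi_{j_0}|)$; invoke the hypothesis $\tfrac1h\phi^*(h(z+1))+(\log\sqrt d)z\le\tfrac1{h_2}\phi^*(h_2z)+\log C_1$ to trade the weight $h_2$ for $h$ while cancelling the factor $(\sqrt d)^z$; and finally discretise by replacing $z$ with $\lceil z\rceil\in\N$, using $|\xi_{j_0}|^z\le|\xi_{j_0}|^{\lceil z\rceil}$ (as $|\xi_{j_0}|\ge1$) and $\phi^*(h(z+1))\ge\phi^*(h\lceil z\rceil)$ (as $z+1\ge\lceil z\rceil$). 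Combining everything yields the norm estimate and finishes the proof.

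I expect this last passage — from the continuous Legendre supremum over $z\ge0$ to the discrete supremum over $n\in\N$ — to be the only non-routine step; it is exactly what the shift ``$y+1$'' and the term ``$(\log\sqrt d)y$'' in the hypothesis relating $h_1,h_2,h_3$ are designed to accommodate (the $\sqrt d$ coming from $|\xi|\le\sqrt d\max_j|\xi_j|$, the shift from rounding $z$ up to an integer). Everything else is bookkeeping around integration by parts, the Leibniz rule, the $L^1$ weight estimate, and the convexity and monotonicity properties of $\phi^*$.
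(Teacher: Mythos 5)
Your argument is correct and follows essentially the same route as the paper: the same integration-by-parts/Leibniz estimate combined with the weight inequality $v_2(x)\leq C_0v_1(t)v_4(t-x)$ and the factor $\norm{v_4/v_3}_{L^1}$, the same use of the hypothesis on $\phi^*$ (the shift $y+1$ absorbing the rounding to integers and $(\log\sqrt d)y$ the dimensional factor), and the same Young-conjugate passage to $e^{-\frac{1}{h_2}\omega(\xi)}$, the only cosmetic difference being that you differentiate $n$ times in the single direction $j_0$ with $|\xi|\leq\sqrt d\,|\xi_{j_0}|$, whereas the paper uses multi-indices $\alpha$ with $|\alpha|=\lceil y\rceil$ and works with continuous $y\geq 0$ throughout. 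One pedantic remark: on the region of small $|\xi|$ your bound produces a constant involving $e^{\frac{1}{h_2}\omega(\sqrt d)}$ rather than the displayed $C_0C_1\norm{v_4/v_3}_{L^1}$, which is immaterial for the continuity claim (the paper's own proof has the same feature on $|\xi|\leq 1$).
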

\begin{proof} Let $\varphi \in \mathcal{S}^{\omega,h_1}_{v_1}$ be arbitrary. For all $y \geq 0$ and $(x, \xi) \in \R^{2d}$ with $|\xi| \geq 1$ it holds that
\begin{align*}
&|\xi|^{y} |V_\psi \varphi(x,\xi)| v_2(x) \\
&\leq \max_{|\alpha| = \lceil y \rceil} \sqrt{d}^{|\alpha|} | \xi^\alpha V_\psi \varphi(x,\xi)| v_2(x) \\
& \leq \max_{|\alpha| = \lceil y \rceil} (\sqrt{d}/2\pi)^{|\alpha|} \sum_{\beta \leq \alpha} \binom{\alpha}{\beta} v_2(x) \int_{\R^d} |\varphi^{(\beta)}(t)| |\psi^{(\alpha- \beta)}(t-x)| \dt \\
& \leq C_0\max_{|\alpha| = \lceil y \rceil} (\sqrt{d}/2\pi)^{|\alpha|} \sum_{\beta \leq \alpha}\binom{\alpha}{\beta}   \int_{\R^d} |\varphi^{(\beta)}(t)| v_1(t) |\psi^{(\alpha- \beta)}(t-x)| v_4(t-x) \dt \\
& \leq C_0 \| \varphi\|_{\mathcal{S}^{\omega,h_1}_{v_1}}  \| \psi\|_{\mathcal{S}^{\omega,h_3}_{v_3}} \| v_4/v_3\|_{L^1} \times \\
&\phantom{\leq} \max_{|\alpha| = \lceil y \rceil} (\sqrt{d}/2\pi)^{|\alpha|} \sum_{\beta \leq \alpha}\binom{\alpha}{\beta} \exp \left( \frac{1}{h_1} \phi^*(h_1|\beta|) + \frac{1}{h_3} \phi^*(h_3|\alpha -\beta|) \right)  \\
& \leq \sqrt{d}C_0 \| \varphi\|_{\mathcal{S}^{\omega,h_1}_{v_1}}  \| \psi\|_{\mathcal{S}^{\omega,h_3}_{v_3}} \| v_4/v_3\|_{L^1}  \times \\
&\phantom{\leq} \exp \left(\frac{1}{\max\{h_1,h_3\}} \phi^* ( \max\{h_1,h_3\}(y+1)) + (\log\sqrt{d}) y \right) \\
& \leq \sqrt{d}C_0C_1 \| \varphi\|_{\mathcal{S}^{\omega,h_1}_{v_1}}  \| \psi\|_{\mathcal{S}^{\omega,h_3}_{v_3}} \| v_4/v_3\|_{L^1} \exp \left(\frac{1}{h_2} \phi^* (h_2y) \right).
\end{align*}
Hence,
\begin{align*}
 |V_\psi \varphi(x,\xi)| v_2(x)  &\leq  \sqrt{d}C_0C_1 \| \varphi\|_{\mathcal{S}^{\omega,h_1}_{v_1}}  \| \psi\|_{\mathcal{S}^{\omega,h_3}_{v_3}} \| v_4/v_3\|_{L^1} \inf_{y \geq 0}  \exp \left(\frac{1}{h_2} \phi^* (h_2y) - (\log |\xi|) y  \right) \\
&= \sqrt{d}C_0C_1 \| \varphi\|_{\mathcal{S}^{\omega,h_1}_{v_1}}  \| \psi\|_{\mathcal{S}^{\omega,h_3}_{v_3}} \| v_4/v_3\|_{L^1} e^{-\frac{1}{h_2} \omega(\xi)}.
\end{align*}
For all $(x, \xi) \in \R^{2d}$ with $|\xi| \leq 1$ we have that
\begin{align*}
 |V_\psi \varphi(x,\xi)| v_2(x)e^{\frac{1}{h_2} \omega(\xi)} &\leq e^{\frac{1}{h_2}\omega(1)} v_2(x) \int_{\R^d} |\varphi(t)| |\psi(t-x)| \dt \\
& \leq C_0e^{\frac{1}{h_2}\omega(1)}  \| \varphi\|_{\mathcal{S}^{\omega,h_1}_{v_1}}  \| \psi\|_{\mathcal{S}^{\omega,h_3}_{v_3}} \| v_4/v_3\|_{L^1}.
\end{align*}
This shows that $V_\psi : \mathcal{S}^{\omega,h_1}_{v_1} \rightarrow C_{v_2 \otimes e^{\frac{1}{h_2}\omega}}(\R^{2d}_{x,\xi})$ is continuous.
\end{proof}

\begin{lemma}\label{main-lemma-2}  Let $\omega$ be a weight function. Choose $C_0,L > 0$ such that
\begin{equation}
\omega(2\pi t) \leq L \omega(t) + \log C_0, \qquad t \geq 0.
\label{in-1}
\end{equation}
Let $v_i: \R^d \to (0,\infty)$, $i = 2,3,4$, be continuous functions such that
\begin{equation}
v_2(x+t) \leq C_1v_3(x) v_4(t), \qquad x,t \in \R^d,
\label{in-2}
\end{equation}
for some $C_1 > 0$. Let $h_i > 0$, $i= 1,2,3$, be such that
\begin{equation}
\frac{1}{\max\{h_1,h_3\}} \phi^* ( \max\{h_1,h_3\}y) + (\log 2) y \leq \frac{1}{h_2} \phi^* (h_2y) 
+ \log C_2, \qquad y \geq 0, 
\label{in-3}
\end{equation}
for some $C_2 > 0$. 
Then, there is $C > 0$ such that
$$
\| M_\xi T_x \psi \|_{\mathcal{S}^{\omega,h_2}_{v_2}} \leq C \|  \psi \|_{\mathcal{S}^{\omega,h_3}_{v_3}} v_4(x) e^{\frac{L}{h_1}\omega(\xi)}, \qquad (x,\xi) \in \R^{2d},
$$
for all $\psi \in \mathcal{S}^{\omega,h_3}_{v_3}$.
\end{lemma}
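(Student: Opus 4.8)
The plan is to estimate the norm $\norm{M_\xi T_x\psi}_{\mathcal{S}^{\omega,h_2}_{v_2}}$ directly. Since $(M_\xi T_x\psi)(t)=e^{2\pi i\xi\cdot t}\psi(t-x)$, the Leibniz rule together with $|\xi^\beta|\le|\xi|^{|\beta|}$ gives
\[
|(M_\xi T_x\psi)^{(\alpha)}(t)|\le\sum_{\beta\le\alpha}\binom{\alpha}{\beta}(2\pi|\xi|)^{|\beta|}\,|\psi^{(\alpha-\beta)}(t-x)|,\qquad t\in\R^d,\ \alpha\in\N^d.
\]
I would multiply this by $v_2(t)$, apply \eqref{in-2} in the form $v_2(t)\le C_1v_3(t-x)v_4(x)$, and then use $|\psi^{(\gamma)}(t-x)|\,v_3(t-x)\le\norm{\psi}_{\mathcal{S}^{\omega,h_3}_{v_3}}\exp\bigl(\tfrac1{h_3}\phi^*(h_3|\gamma|)\bigr)$. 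The $t$-dependence disappears, so the statement reduces to the purely numerical inequality, to be proved for every $\alpha\in\N^d$ and $\xi\in\R^d$,
\[
\sum_{\beta\le\alpha}\binom{\alpha}{\beta}(2\pi|\xi|)^{|\beta|}\exp\Bigl(\tfrac1{h_3}\phi^*(h_3|\alpha-\beta|)\Bigr)\le C'\,e^{\frac{L}{h_1}\omega(\xi)}\exp\Bigl(\tfrac1{h_2}\phi^*(h_2|\alpha|)\Bigr),
\]
the prefactor $C_1v_4(x)\,\norm{\psi}_{\mathcal{S}^{\omega,h_3}_{v_3}}$ carrying through unchanged. Two things then remain: turning the polynomial factor $(2\pi|\xi|)^{|\beta|}$ into the exponential weight $e^{\frac{L}{h_1}\omega(\xi)}$, and disposing of the $\phi^*$-bookkeeping together with the binomial sum.

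For the frequency factor I would use the elementary bound
\[
r^{k}\le\exp\Bigl(\tfrac1{h_1}\phi^*(h_1k)\Bigr)e^{\frac1{h_1}\omega(r)},\qquad r\ge0,\ k\in\N,
\]
which is immediate from $\tfrac1{h_1}\phi^*(h_1k)=\sup_{x\ge0}\{xk-\tfrac1{h_1}\phi(x)\}\ge k\log r-\tfrac1{h_1}\omega(r)$ for $r\ge1$ (and trivial for $r\le1$, since $\phi^*\ge0$). Taking $r=2\pi|\xi|$ and then invoking \eqref{in-1} yields
\[
(2\pi|\xi|)^{|\beta|}\le C_0^{1/h_1}\exp\Bigl(\tfrac1{h_1}\phi^*(h_1|\beta|)\Bigr)e^{\frac{L}{h_1}\omega(\xi)}.
\]
Inserting this into the sum is the step that produces the frequency weight with the correct constant $L/h_1$.

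For the remaining sum I would set $h_0:=\max\{h_1,h_3\}$ and exploit that $y\mapsto\phi^*(y)/y$ is increasing: this gives $\tfrac1{h_i}\phi^*(h_i\ell)\le\tfrac1{h_0}\phi^*(h_0\ell)$ whenever $h_i\le h_0$, and the superadditivity $\phi^*(h_0|\beta|)+\phi^*(h_0|\alpha-\beta|)\le\phi^*(h_0|\alpha|)$ for $\beta\le\alpha$. The $\phi^*$-exponential therefore factors out of the sum, leaving $\sum_{\beta\le\alpha}\binom{\alpha}{\beta}=2^{|\alpha|}=e^{(\log2)|\alpha|}$, and \eqref{in-3} — whose left-hand side is evaluated exactly at the scale $h_0$ and absorbs this factor — converts $2^{|\alpha|}\exp\bigl(\tfrac1{h_0}\phi^*(h_0|\alpha|)\bigr)$ into $C_2\exp\bigl(\tfrac1{h_2}\phi^*(h_2|\alpha|)\bigr)$. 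Collecting constants gives the assertion with $C=C_0^{1/h_1}C_1C_2$.

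I do not expect a real obstacle; this is a bookkeeping estimate of the same type as Lemma \ref{main-lemma-1}. The one point that requires care is that Young's inequality must be applied to the $|\xi|^{|\beta|}$ factor at the scale $h_1$ — so that the frequency weight comes out as $e^{\frac{L}{h_1}\omega(\xi)}$ precisely as in the statement — and that the residual factor $\exp\bigl(\tfrac1{h_1}\phi^*(h_1|\beta|)\bigr)$ is then combined, via superadditivity at the scale $h_0$, with the window term $\exp\bigl(\tfrac1{h_3}\phi^*(h_3|\alpha-\beta|)\bigr)$ before \eqref{in-3} is used to absorb the $2^{|\alpha|}$.
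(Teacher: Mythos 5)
Your proposal is correct and follows essentially the same route as the paper's proof: Leibniz expansion of $(M_\xi T_x\psi)^{(\alpha)}$, the submultiplicativity \eqref{in-2}, Young-conjugate duality at scale $h_1$ to convert $(2\pi|\xi|)^{|\beta|}$ into $e^{\frac{1}{h_1}\omega(2\pi\xi)}$ followed by \eqref{in-1}, and then monotonicity in the scale plus superadditivity of $\phi^*$ at $\max\{h_1,h_3\}$ together with \eqref{in-3} to absorb the binomial factor $2^{|\alpha|}$. The bookkeeping (including the constant $C_0^{1/h_1}C_1C_2$) matches the paper's estimate, so nothing further is needed.
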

\begin{proof}
Let $\psi \in \mathcal{S}^{\omega,h_3}_{v_3}$ and $(x,\xi) \in \R^{2d}$ be arbitrary. For all $\alpha \in \N^d$ and $t \in \R^d$ it holds that
\begin{align*}
|(M_\xi T_x \psi)^{(\alpha)}(t)| v_2(t) &\leq  \sum_{\beta \leq \alpha} \binom{\alpha}{\beta} (2\pi |\xi|)^{|\beta|} |\psi^{(\alpha-\beta)} (t-x)| v_2(t) \\
&\leq C_1v_4(x)  \sum_{\beta \leq \alpha} \binom{\alpha}{\beta} (2\pi |\xi|)^{|\beta|}  |\psi^{(\alpha-\beta)} (t-x)| v_3(t-x) \\
&\leq C_1\|\psi\|_{\mathcal{S}^{\omega,h_3}_{v_3}} v_4(x) \sum_{\beta \leq \alpha} \binom{\alpha}{\beta}   \exp \left( (\log 2\pi |\xi|)|\beta| - \frac{1}{h_1} \phi^*(h_1 |\beta|) \right) \times \\
& \phantom{\leq}  \exp \left(\frac{1}{h_3} \phi^*(h_3 |\alpha-\beta|) + \frac{1}{h_1} \phi^*(h_1 |\beta|) \right) \\
&\leq  C_1\|\psi\|_{\mathcal{S}^{\omega,h_3}_{v_3}} v_4(x) e^{\frac{1}{h_1} \omega(2\pi \xi)}    \times \\
& \phantom{\leq} \exp \left(\frac{1}{\max\{h_1,h_3\}} \phi^*(\max\{h_1,h_3\} |\alpha|) + (\log 2)|\alpha| \right) \\
& \leq C_0C_1C_2\|\psi\|_{\mathcal{S}^{\omega,h_3}_{v_3}} v_4(x) e^{\frac{L}{h_1} \omega(\xi)}  \exp \left(\frac{1}{h_2} \phi^*(h_2 |\alpha|) \right).
\end{align*}
This shows the result.
\end{proof}
\begin{lemma}\label{main-lemma-20}  Let $\omega$ be a weight function. Choose $C_0,L > 0$ such that \eqref{in-1} holds.
Let $v_i: \R^d \to (0,\infty)$, $i = 1,2,3,4$, be continuous functions such that \eqref{in-2} holds for some $C_1 > 0$ and $v_4/v_1 \in L^1$.
Let $h_i > 0$, $i= 1,2,3$, be such that  \eqref{in-3} holds for some $C_2 > 0$.  Let $\gamma \in \mathcal{S}^{\omega,h_3}_{v_3}$. Then,   the mapping 
$$
V^*_\gamma : C_{v_1 \otimes e^{\frac{2L}{h_1}\omega}}(\R^{2d}_{x,\xi}) \rightarrow \mathcal{S}^{\omega,h_2}_{v_2} 
$$
is continuous.
\end{lemma}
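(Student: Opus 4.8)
The plan is to read off the continuity of $V^*_\gamma$ directly from its defining weak integral
\[
V^*_\gamma F = \int \int_{\R^{2d}} F(x,\xi)\, M_\xi T_x \gamma \, \dx \dxi ,
\]
by estimating the $\mathcal{S}^{\omega,h_2}_{v_2}$-norm under the integral sign. First I would note that for $F \in C_{v_1 \otimes e^{\frac{2L}{h_1}\omega}}(\R^{2d})$ one has the pointwise bound $|F(x,\xi)| \leq \norm{F}_{v_1 \otimes e^{\frac{2L}{h_1}\omega}}\, v_1(x)^{-1} e^{-\frac{2L}{h_1}\omega(\xi)}$, while Lemma \ref{main-lemma-2}, applied with $\gamma$ in the role of $\psi$ (its hypotheses \eqref{in-1}, \eqref{in-2}, \eqref{in-3} being exactly those assumed here), yields a constant $C > 0$ with
\[
\norm{M_\xi T_x \gamma}_{\mathcal{S}^{\omega,h_2}_{v_2}} \leq C\, \norm{\gamma}_{\mathcal{S}^{\omega,h_3}_{v_3}}\, v_4(x)\, e^{\frac{L}{h_1}\omega(\xi)}, \qquad (x,\xi) \in \R^{2d}.
\]
Multiplying these two bounds the $\xi$-exponentials combine to $e^{-\frac{L}{h_1}\omega(\xi)}$ and the spatial weight becomes $v_4(x)/v_1(x)$, so that $|F(x,\xi)|\, \norm{M_\xi T_x\gamma}_{\mathcal{S}^{\omega,h_2}_{v_2}}$ is dominated by $C\,\norm{\gamma}_{\mathcal{S}^{\omega,h_3}_{v_3}}\norm{F}_{v_1 \otimes e^{\frac{2L}{h_1}\omega}}$ times $\bigl(v_4(x)/v_1(x)\bigr)\, e^{-\frac{L}{h_1}\omega(\xi)}$.

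Integrating over $\R^{2d}$ and using that the dominating function factorises, I would conclude
\[
\int \int_{\R^{2d}} |F(x,\xi)|\, \norm{M_\xi T_x\gamma}_{\mathcal{S}^{\omega,h_2}_{v_2}} \dx \dxi \leq C\, \norm{\gamma}_{\mathcal{S}^{\omega,h_3}_{v_3}} \norm{F}_{v_1 \otimes e^{\frac{2L}{h_1}\omega}} \Bigl( \int_{\R^d} \frac{v_4(x)}{v_1(x)} \dx \Bigr) \Bigl( \int_{\R^d} e^{-\frac{L}{h_1}\omega(\xi)} \dxi \Bigr) .
\]
The first bracketed integral is finite by the hypothesis $v_4/v_1 \in L^1$; the second is finite because $\log t = o(\omega(t))$ (property $(\gamma)$) forces $e^{-\frac{L}{h_1}\omega(\xi)}$ to decay faster than any power of $|\xi|$, so that $r^{d-1} e^{-\frac{L}{h_1}\omega(r)}$ is integrable on $[0,\infty)$ in polar coordinates. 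Hence the integral defining $V^*_\gamma F$ converges absolutely in the Banach space $\mathcal{S}^{\omega,h_2}_{v_2}$, its value lies in that space, and $\norm{V^*_\gamma F}_{\mathcal{S}^{\omega,h_2}_{v_2}} \leq C' \norm{F}_{v_1 \otimes e^{\frac{2L}{h_1}\omega}}$ with $C'$ independent of $F$, which is the asserted continuity.

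The one point that needs care is to justify the interchange of norm and integral, i.e.\ that $(x,\xi) \mapsto F(x,\xi) M_\xi T_x\gamma$ is a Bochner-integrable $\mathcal{S}^{\omega,h_2}_{v_2}$-valued map and that the resulting Bochner integral agrees with the weak definition of $V^*_\gamma$. For the first I would check that $(x,\xi) \mapsto M_\xi T_x\gamma$ is continuous from $\R^{2d}$ into $\mathcal{S}^{\omega,h_2}_{v_2}$ --- a routine estimate using the mean value theorem, Leibniz' rule and \eqref{in-2} --- so that the integrand is continuous, hence strongly measurable (since $\R^{2d}$ is $\sigma$-compact), with absolutely integrable norm by the previous paragraph; for the second one pairs both sides with an arbitrary element of the dual and applies Fubini's theorem. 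I expect this measurability/consistency bookkeeping to be the main, albeit entirely routine, obstacle, the analytic content being the one-line combination of Lemma \ref{main-lemma-2} with the two integrability facts.
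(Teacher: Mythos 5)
Your argument is correct and is exactly the paper's route: the paper proves this lemma with the single line ``This follows from Lemma \ref{main-lemma-2}'', meaning precisely the combination you spell out, namely the window bound from Lemma \ref{main-lemma-2} times the pointwise bound on $F$, with the leftover factor $\bigl(v_4/v_1\bigr)(x)\,e^{-\frac{L}{h_1}\omega(\xi)}$ integrable by hypothesis and by condition $(\gamma)$. Your extra care about Bochner integrability and consistency with the weak definition is sound bookkeeping that the paper leaves implicit.
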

\begin{proof}
This follows from Lemma \ref{main-lemma-2}.
\end{proof}
Given a weight function $\omega$ and a weight system $\V$, we define
\begin{gather*}
C_{\ZZ^{(\omega)}_{(\V)}}(\R^{2d}_{x,\xi}) := \varprojlim_{h \to 0^+}\varinjlim_{\lambda \to 0^+} C_{ \frac{1}{v_\lambda} \otimes e^{\frac{1}{h}\omega}}(\R^{2d}_{x,\xi}), \\
C_{\ZZ^{\{\omega\}}_{\{\V\}}}(\R^{2d}_{x,\xi}) := \varprojlim_{\lambda \to \infty}\varinjlim_{h \to \infty} C_{ \frac{1}{v_\lambda} \otimes e^{\frac{1}{h}\omega}}(\R^{2d}_{x,\xi}).
\end{gather*}
We then have:
\begin{proposition} \label{STFT-Z}
Let $\omega$ be a weight function and let $\V$ be a weight system satisfying $[\condM]$ and $[\condN]$. Let $\psi \in \mathcal{S}^{[\omega]}_{[\V]}$ and $\gamma \in \mathcal{S}^{[\omega]}_{[ \widetilde \V]}$. Then, the mappings
$$
V_\psi: \ZZ^{[\omega]}_{[\V]} \rightarrow C_{\ZZ^{[\omega]}_{[\V]}}(\R^{2d}_{x,\xi}), \qquad
V^*_\gamma:  C_{\ZZ^{[\omega]}_{[\V]}}(\R^{2d}_{x,\xi}) \rightarrow \ZZ^{[\omega]}_{[\V]}
$$
are continuous. Moreover, if $(\gamma, \psi)_{L^2} \neq 0$, then
\begin{equation}
\frac{1}{(\gamma, \psi)_{L^{2}}} V^{*}_{\gamma} \circ V_{\psi} = \id_{\ZZ^{[\omega]}_{[\V]}}.
\label{eq:STFTZSpReconstruct} 
\end{equation}
\end{proposition}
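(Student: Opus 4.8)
The plan is to deduce the continuity of $V_\psi$ and $V_\gamma^*$ on the $(PLB)$-spaces by applying the three preceding lemmas (Lemma \ref{main-lemma-1}, Lemma \ref{main-lemma-2} and Lemma \ref{main-lemma-20}) at the level of the individual Banach steps, with the indices $h$ and $\lambda$ chosen appropriately, and then to transfer the reconstruction identity \eqref{eq:reconstructSTFTL2} from $L^2$ to $\ZZ^{[\omega]}_{[\V]}$ by a density/consistency argument. I treat the Beurling case explicitly; the Roumieu case is the same with the quantifiers on $h$ and $\lambda$ reversed.

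\emph{Step 1: continuity of $V_\psi$.} Fix $h>0$; I must find $h'>0$ and, for each $\lambda>0$, some $\lambda'>0$ (with the relevant constant) such that $V_\psi\colon \mathcal{S}^{\omega,h'}_{1/v_{\lambda'}}\to C_{(1/v_\lambda)\otimes e^{\frac1h\omega}}(\R^{2d}_{x,\xi})$ is continuous. In the notation of Lemma \ref{main-lemma-1} I take $v_1=1/v_{\lambda'}$, $v_2=1/v_\lambda$, $h_2=h$, and I need $v_2(x+t)\le C_0 v_1(x)\check v_4(t)$, i.e. $v_{\lambda'}(x)\le C_0 v_\lambda(x+t)\check v_4(t)$. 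By condition $[\condM]$ applied to $\V$ there are $\mu,\nu\le\lambda$ and $C>0$ with $v_\lambda(x+t)\le C v_\mu(x)v_\nu(t)$; rearranging and using $v_\mu\le v_{\lambda'}$ for $\lambda'\le\mu$ is the wrong direction, so instead I use $[\condM]$ in the form that bounds $v_{\lambda'}(x)$: choosing $\lambda'\le\lambda$ small and invoking $1\le v_{\lambda'}\le v_\mu$ together with the submultiplicativity gives $v_{\lambda'}(x)\le v_\mu(x)\le C v_\lambda(x+t)v_\nu(t)$ once $\lambda'\le\mu$, which is exactly the hypothesis with $\check v_4=v_\nu(-\cdot)$, i.e. $v_4=\check v_\nu$. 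Then $[\condN]$ gives $v_3:=v_{\nu'}$ with $\nu'\le\nu$ and $v_\nu/v_{\nu'}=v_4/v_3\in L^1$; since $\psi\in\mathcal{S}^{[\omega]}_{[\V]}$, for every $h_3>0$ we have $\psi\in\mathcal{S}^{\omega,h_3}_{v_{\nu'}}$. Finally Lemma \ref{M12}(ii) produces $h'=h_1$ (and shrink $h_3$ to the same value) so that the required inequality $\frac{1}{\max\{h_1,h_3\}}\phi^*(\max\{h_1,h_3\}(y+1))+(\log\sqrt d)y\le \frac1h\phi^*(hy)+\log C_1$ holds. Taking projective/inductive limits over $h,\lambda$ then yields continuity $V_\psi\colon\ZZ^{[\omega]}_{[\V]}\to C_{\ZZ^{[\omega]}_{[\V]}}(\R^{2d}_{x,\xi})$.

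\emph{Step 2: continuity of $V_\gamma^*$.} This is dual in structure: fix $h>0$, and using Lemma \ref{main-lemma-20} with $v_2=1/v_\lambda$, $v_3=1/v_{\nu'}$ (from $\gamma\in\mathcal{S}^{[\omega]}_{[\check\V]}$), the decomposition inequality \eqref{in-2} again supplied by $[\condM]$, the $L^1$ condition $v_4/v_1\in L^1$ by $[\condN]$, and \eqref{in-3} by Lemma \ref{M12} — together with \eqref{in-1}, which is just property $(\alpha)$ of $\omega$ — one gets continuity of $V_\gamma^*\colon C_{(1/v_{\lambda'})\otimes e^{\frac{2L}{h_1}\omega}}(\R^{2d})\to\mathcal{S}^{\omega,h}_{1/v_\lambda}$ for suitable $h_1,\lambda'$. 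The extra factor $e^{\frac{2L}{h_1}\omega}$ in the weight is harmless because, in the limit over $h$, it only amounts to rescaling the index. Passing to the limits gives $V_\gamma^*\colon C_{\ZZ^{[\omega]}_{[\V]}}(\R^{2d}_{x,\xi})\to\ZZ^{[\omega]}_{[\V]}$ continuous.

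\emph{Step 3: the reconstruction identity.} Since $\ZZ^{[\omega]}_{[\V]}\hookrightarrow L^2(\R^d)$ continuously (using $[\condN]$, which gives $\mathcal{S}^{[\omega]}_{[\V]}\subset L^1\cap L^\infty$, hence every step space embeds in $L^2$; the weights $1/v_\lambda\le 1$ only help), both $V_\psi$ and $V_\gamma^*$ restricted to $\ZZ^{[\omega]}_{[\V]}$ agree with the classical $L^2$ operators. Identity \eqref{eq:reconstructSTFTL2} therefore gives $\frac{1}{(\gamma,\psi)_{L^2}}V_\gamma^*\circ V_\psi=\id$ on $\ZZ^{[\omega]}_{[\V]}$ as a pointwise identity of functions; combined with Steps 1 and 2 this is the asserted continuous identity \eqref{eq:STFTZSpReconstruct}, and as a byproduct $\ZZ^{[\omega]}_{[\V]}$ is a complemented subspace of $C_{\ZZ^{[\omega]}_{[\V]}}(\R^{2d}_{x,\xi})$ (this is the point that will be exploited in the proof of Theorem \ref{t:ZSpTop}).

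\emph{Main obstacle.} The bookkeeping in Steps 1--2 — tracking which of $[\condM]$, $[\condN]$, the $\phi^*$-estimates of Lemma \ref{M12}, and the weight-index shifts must be invoked in which order, and verifying that every existential index ($\lambda'$, $\nu$, $\nu'$, $h_1$, $h_3$) can indeed be chosen consistently so that the projective and inductive limits match up — is where the real care is needed; the estimates themselves are already packaged in the three lemmas. In the Roumieu case one must additionally check that the order of quantifiers ($\forall\mu\,\exists\lambda\ge\mu$ in $[\condM]$, $[\condN]$) still closes the diagram, which it does because the limits are taken in the opposite monotonicity.
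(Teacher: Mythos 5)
Your Steps 1--2 follow the paper's route (reduce to continuity between the Banach steps and invoke Lemmas \ref{main-lemma-1} and \ref{main-lemma-20} together with Lemma \ref{M12}, $[\condM]$ and $[\condN]$), up to one quantifier slip: you fix the \emph{target} inductive index $\lambda$ and look for a \emph{domain} index $\lambda'$, whereas continuity out of an inductive limit requires the opposite order (for every domain step $\lambda$ there must exist a target step $\mu$ into which $V_\psi$ maps continuously, as in the paper's reduction $\forall h\,\exists k\,\forall\lambda\,\exists\mu$). In the Beurling case $(\condM)$ applied to the given domain index $\lambda$ produces exactly such a $\mu\le\lambda$, so this is repairable, but as written the reduction does not establish continuity on $\ZZ^{[\omega]}_{[\V]}$.

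The genuine gap is Step 3. The claimed continuous embedding $\ZZ^{[\omega]}_{[\V]}\hookrightarrow L^{2}(\R^d)$ is false: the Banach steps of $\ZZ^{[\omega]}_{[\V]}$ are $\mathcal{S}^{\omega,h}_{1/v_\lambda}$, whose norm carries the weight $1/v_\lambda\le 1$, so membership only forces $|\varphi^{(\alpha)}(x)|\le C\,v_\lambda(x)e^{\frac1h\phi^*(h|\alpha|)}$, i.e.\ elements may \emph{grow} like $v_\lambda$ (the space is the analogue of $\mathcal{O}_M$; already the constant function $1$ lies in $\ZZ^{[\omega]}_{[\V]}$ but not in $L^2$). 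Your parenthetical ``the weights $1/v_\lambda\le1$ only help'' conflates the roles of the weights in $\mathcal{S}^{[\omega]}_{[\V]}$ (decay) and $\ZZ^{[\omega]}_{[\V]}$ (growth). Consequently one cannot obtain \eqref{eq:STFTZSpReconstruct} by restricting the $L^2$ identity \eqref{eq:reconstructSTFTL2}, nor by an unproved density of $L^2\cap\ZZ^{[\omega]}_{[\V]}$. The paper instead proves the identity directly: for $\varphi\in\ZZ^{[\omega]}_{[\V]}$ and fixed $x$, both $\varphi\,T_x\overline{\psi}$ and $V_\psi\varphi(x,\cdot)$ lie in $L^1$ (the first because $\psi$ decays faster than $\varphi$ grows, by $[\condM]$ and $[\condN]$; the second by the $e^{-\frac1h\omega(\xi)}$ decay from Step 1), so Fourier inversion gives $\int_{\R^d}V_\psi\varphi(x,\xi)e^{2\pi i\xi\cdot t}\dxi=\varphi(t)\overline{\psi(t-x)}$, and integrating in $x$ against $T_x\gamma(t)$ yields $(\gamma,\psi)_{L^2}\varphi(t)$ pointwise. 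Some such direct argument (with the accompanying Fubini justification) is needed to close Step 3.
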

\begin{proof}
It suffices to show that
$$
\forall h ~ \exists k ~ \forall \lambda ~\exists \mu ~ (\forall \mu ~ \exists \lambda ~ \forall k ~\exists h) ~  : ~  V_\psi: \mathcal{S}^{\omega,k}_{1/v_\lambda} \rightarrow  C_{ \frac{1}{v_\mu} \otimes e^{\frac{1}{h}\omega}}(\R^{2d}_{x,\xi}) \mbox{ is continuous,}
$$
and 
$$
\forall h ~ \exists k ~ \forall \lambda ~\exists \mu ~ (\forall \mu ~ \exists \lambda ~ \forall k ~\exists h) ~  : ~  V^*_\gamma: C_{ \frac{1}{v_\lambda} \otimes e^{\frac{1}{k}\omega}}(\R^{2d}_{x,\xi}) \rightarrow \mathcal{S}^{\omega,h}_{1/v_\mu}    \mbox{ is continuous.}
$$
By Lemma \ref{M12} and the fact that $\V$ satisfies $[\condM]$ and $[\condN]$, this follows from Lemma \ref{main-lemma-1} and Lemma \ref{main-lemma-20}, respectively.
We now show \eqref{eq:STFTZSpReconstruct}. Let $\varphi \in \ZZ^{[\omega]}_{[\V]}$ be arbitrary. Since $\V$ satisfies $[\condN]$ and $\omega$ satisfies $(\gamma)$, the continuous functions $\varphi T_{x} \overline{\psi}$ and $V_\psi \varphi (x, \, \cdot \,)$, with $x \in \R^d$ fixed, both belong to $L^1$. As $V_\psi \varphi (x, \xi)  = \mathcal{F}( \varphi T_{x} \overline{\psi})(\xi)$, we obtain that
\begin{align*}
&  \int \int_{\R^{2d}} V_\psi \varphi (x, \xi) M_\xi T_x \gamma(t) \dx \dxi =  \int_{\R^d} \left( \int_{\R^d} V_\psi \varphi (x, \xi) e^{2\pi i \xi \cdot t} d\xi \right) T_x \gamma(t)  \dx \\
&= \varphi(t) \int_{\R^d}  T_{x} \overline{\psi}(t)  T_x\gamma(t)  \dx  = (\gamma, \psi)_{L^2} \varphi(t)
\end{align*}
for all $t \in \R^d$.
\end{proof}
\begin{lemma}\label{ex-window}
Let $\omega$ be a weight function and let $\V$ be a weight system satisfying $[\condwM]$. If $\mathcal{S}^{[\omega]}_{[\V]} \neq \{0\}$, then also $\mathcal{S}^{[\omega]}_{[\V]} \cap \mathcal{S}^{[\omega]}_{[\widetilde \V]} \neq \{0\}$.
\end{lemma}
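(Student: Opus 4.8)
The plan is to exhibit an explicit nonzero element of $\mathcal{S}^{[\omega]}_{[\V]} \cap \mathcal{S}^{[\omega]}_{[\check \V]}$ built from a given nonzero element of $\mathcal{S}^{[\omega]}_{[\V]}$. Since $\V$ satisfies $[\condwM]$, the space $\mathcal{S}^{[\omega]}_{[\V]}$ is translation invariant (see Section \ref{sect:GS}), so after replacing a nonzero $\varphi \in \mathcal{S}^{[\omega]}_{[\V]}$ by a suitable translate we may assume $\varphi(0) \neq 0$. I then set
\[ \psi(x) := \varphi(x)\varphi(-x) = \varphi(x)\check\varphi(x), \qquad x \in \R^d, \]
so that $\psi(0) = \varphi(0)^2 \neq 0$, and the claim reduces to showing $\psi \in \mathcal{S}^{[\omega]}_{[\V]} \cap \mathcal{S}^{[\omega]}_{[\check \V]}$.

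Membership of $\psi$ in both spaces will follow from a pointwise multiplication estimate. First, directly from the definition of the norms $\norm{\,\cdot\,}_{\mathcal{S}^{\omega,h}_{v}}$ and the chain rule, reflection $f \mapsto \check f$ maps $\mathcal{S}^{\omega,h}_{v}$ isometrically onto $\mathcal{S}^{\omega,h}_{\check v}$; hence $\check\varphi \in \mathcal{S}^{[\omega]}_{[\check \V]}$, and since $\check v_\lambda \geq 1$ for all $\lambda$ we have $\norm{g}_{\mathcal{S}^{\omega,h}_{1}} \leq \norm{g}_{\mathcal{S}^{\omega,h}_{\check v_\lambda}}$, so $\check\varphi$ lies in the relevant Banach spaces $\mathcal{S}^{\omega,h}_{1}$. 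Next I establish: for any continuous $v : \R^d \to (0,\infty)$ with $v \geq 1$ and any $h > 0$ there are $h' > 0$ ($h'$ small when $h$ is prescribed, for the Beurling case; $h'$ large when $h$ is prescribed, for the Roumieu case) and $C > 0$ such that
\[ \norm{fg}_{\mathcal{S}^{\omega,h'}_{v}} \leq C \norm{f}_{\mathcal{S}^{\omega,h}_{v}} \norm{g}_{\mathcal{S}^{\omega,h}_{1}}, \qquad f \in \mathcal{S}^{\omega,h}_{v},\ g \in \mathcal{S}^{\omega,h}_{1}. \]
This is routine: apply the Leibniz rule to $(fg)^{(\alpha)}$, estimate $|f^{(\beta)}(x)|v(x)$ and $|g^{(\alpha-\beta)}(x)|$ by the respective norms, use $v \geq 1$, absorb the factor $\sum_{\beta \leq \alpha}\binom{\alpha}{\beta} = 2^{|\alpha|}$ together with the inequality $\tfrac1h\phi^*(h|\beta|) + \tfrac1h\phi^*(h|\alpha-\beta|) \leq \tfrac1h\phi^*(h|\alpha|) + C'$ (a consequence of the convexity of $\phi^*$), and finally invoke Lemma \ref{M12} to pass from $(\log 2)|\alpha| + \tfrac1h\phi^*(h|\alpha|)$ to $\tfrac1{h'}\phi^*(h'|\alpha|) + \log C$. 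Applying this with $v = v_\lambda$, $f = \varphi$, $g = \check\varphi$ and running over the appropriate parameters $h$ (all $h > 0$ in the Beurling case, some $h > 0$ in the Roumieu case) yields $\psi = \varphi \cdot \check\varphi \in \mathcal{S}^{[\omega]}_{[\V]}$.

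Finally, $\check \V$ is again a weight system satisfying $[\condwM]$ and $\check{\check \V} = \V$, so writing $\psi = \check\varphi \cdot \varphi = \check\varphi \cdot \check{(\check\varphi)}$ and applying the same multiplication estimate with $\V$ replaced by $\check \V$ gives $\psi \in \mathcal{S}^{[\omega]}_{[\check \V]}$ as well. Since $\psi(0) \neq 0$, this completes the proof. The only point requiring genuine care is the bookkeeping of parameters in the multiplication estimate so that the quantifier order matches the Beurling and Roumieu definitions of $\mathcal{S}^{[\omega]}_{[\V]}$; this is precisely what parts $(i)$ and $(ii)$ of Lemma \ref{M12} are designed to handle, so no real obstacle arises.
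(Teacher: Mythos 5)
Your proposal is correct and is essentially the paper's own proof: use the $[\condwM]$-translation invariance of $\mathcal{S}^{[\omega]}_{[\V]}$ to arrange $\varphi(0) \neq 0$ and take $\psi = \varphi\check{\varphi}$, the multiplication bound you sketch being exactly the paper's Lemma \ref{main-lemma-3} specialized to $v_3 \equiv 1$ (the paper leaves this step implicit). One small quantifier nit: in the Beurling case the product estimate should be stated as ``for every target $h'$ there exists $h$'' (Lemma \ref{M12}$(i)$), not ``for every $h$ there exists a small $h'$'', but since $\varphi$ lies in all the Banach steps this does not affect the argument.
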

\begin{proof}
Since the space  $\mathcal{S}^{[\omega]}_{[\V]}$ is translation-invariant and non-trivial, there exists $\varphi \in  \mathcal{S}^{[\omega]}_{[\V]}$ with $\varphi(0) \neq 0$. Then, $\psi = \varphi \widetilde{\varphi} \in \mathcal{S}^{[\omega]}_{[\V]} \cap \mathcal{S}^{[\omega]}_{[\widetilde \V]}$ and $\psi(0) = \varphi^2(0) \neq 0$.
\end{proof}

\begin{proof}[Proof of part $(a)$ of Theorem \ref{t:ZSpTop}] The implication $(ii) \Rightarrow (iii)$ holds for any locally convex space. We now show $(i) \Rightarrow (ii)$. By  Lemma \ref{ex-window}, there exists $\psi \in \mathcal{S}^{[\omega]}_{[\V]} \cap \mathcal{S}^{[\omega]}_{[\widetilde \V]} \backslash \{ 0\}$.  Proposition \ref{STFT-Z} (with $\gamma = \psi $) implies that $\ZZ^{[\omega]}_{[\V]}$ is isomorphic to a complemented subspace of $C_{\ZZ^{[\omega]}_{[\V]}}(\R^{2d}_{x,\xi})$. Note that
$$
C_{\ZZ^{(\omega)}_{(\V)}}(\R^{2d}_{x,\xi}) = \A_{(\V_\omega,\V)}C(\R^{2d}_{\xi,x}), \qquad C_{\ZZ^{\{\omega\}}_{\{\V\}}}(\R^{2d}_{x,\xi}) = \A_{\{\V,\V_\omega\}}C(\R^{2d}_{x,\xi}).
$$
Hence, $C_{\ZZ^{[\omega]}_{[\V]}}(\R^{2d}_{x,\xi})$ is ultrabornological by Theorem \ref{t:QImplyUltraBorn} and Lemma \ref{l:EquivWeighFuncGridCond}. The result now follows from the fact that a complemented subspace of an ultrabornological space is again ultrabornological.
\end{proof}
The proof of part $(b)$ of Theorem \ref{t:ZSpTop} is based on the mapping properties of the analysis and synthesis operator on $\ZZ^{[\omega]}_{[\V]}$. Given a weight function $\omega$, a weight system $\V$ and $a,b > 0$, we define
\begin{gather*}
C_{\ZZ^{(\omega)}_{(\V)}}(a\Z^d_x \times b\Z^d_\xi) := \varprojlim_{h \to 0^+}\varinjlim_{\lambda \to 0^+} C_{ \frac{1}{v_\lambda} \otimes e^{\frac{1}{h}\omega}}(a\Z^d_x \times b\Z^d_\xi), \\
C_{\ZZ^{\{\omega\}}_{\{\V\}}}(a\Z^d_x \times b\Z^d_\xi) := \varprojlim_{\lambda \to \infty}\varinjlim_{h \to \infty} C_{ \frac{1}{v_\lambda} \otimes e^{\frac{1}{h}\omega}}(a\Z^d_x \times b\Z^d_\xi).
\end{gather*}
\begin{proposition}\label{GF-Z}
Let $\omega$ be a weight function, let $\V$ be a weight system satisfying $[\condM]$ and $[\condN]$ and let $a,b>0$. Let $\psi \in \mathcal{S}^{[\omega]}_{[\V]}$ and $\gamma \in \mathcal{S}^{[\omega]}_{[ \widetilde \V]}$. Then, the mappings
$$
C^{a,b}_\psi: \ZZ^{[\omega]}_{[\V]} \rightarrow C_{\ZZ^{[\omega]}_{[\V]}}(a\Z^d_x \times b\Z^d_\xi) , \qquad
D^{a,b}_\gamma:  C_{\ZZ^{[\omega]}_{[\V]}}(a\Z^d_x \times b\Z^d_\xi)  \rightarrow \ZZ^{[\omega]}_{[\V]}
$$
are continuous. Moreover, if $(\psi, \gamma)$ is a dual pair of windows on $a\Z^d \times b\Z^d$, then
\begin{equation}
\frac{1}{(ab)^d} C^{1/a,1/b}_\psi \circ D^{1/a,1/b}_\gamma = \operatorname{id}_{C_{\ZZ^{[\omega]}_{[\V]}}\left( \frac{1}{a}\Z^d_x \times \frac{1}{b}\Z^d_\xi\right)}.
\label{reverse}
\end{equation}
\end{proposition}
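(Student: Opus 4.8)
The plan is to prove Proposition~\ref{GF-Z} by imitating the proof of Proposition~\ref{STFT-Z}, replacing the continuous STFT by its sampled version and the adjoint $V^*_\gamma$ by the synthesis operator $D_\gamma$. Just as before, the continuity statements reduce, via the definitions of the projective and inductive limits involved, to showing that for a suitable choice of indices the single-Banach-space maps
\[
C^{a,b}_\psi: \mathcal{S}^{\omega,k}_{1/v_\lambda} \rightarrow C_{\frac{1}{v_\mu}\otimes e^{\frac{1}{h}\omega}}(a\Z^d_x \times b\Z^d_\xi), \qquad
D^{a,b}_\gamma: C_{\frac{1}{v_\lambda}\otimes e^{\frac{1}{k}\omega}}(a\Z^d_x \times b\Z^d_\xi) \rightarrow \mathcal{S}^{\omega,h}_{1/v_\mu}
\]
are continuous, with the index relations governed by Lemma~\ref{M12} and by conditions $[\condM]$ and $[\condN]$ on $\V$ (and $[\condwM]$, via \eqref{joo} and \eqref{discrete-l1}, to keep the discrete sums summable). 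First I would establish the bound on $C^{a,b}_\psi$: for $\varphi\in\mathcal{S}^{\omega,k}_{1/v_\lambda}$ and $(k_0,n_0)\in a\Z^d\times b\Z^d$ one estimates $|V_\psi\varphi(k_0,n_0)|$ exactly as in the first half of Lemma~\ref{main-lemma-1}, now using \eqref{discrete-l1} (a consequence of $[\condwM]$ and $[\condN]$) in place of the $L^1$-integrability of $v_4/v_3$; this produces the required factor $\frac{1}{v_\mu(k_0)}e^{-\frac{1}{h}\omega(n_0)}$. For $D^{a,b}_\gamma$, I would combine Lemma~\ref{main-lemma-2} (which controls $\|M_{n_0}T_{k_0}\gamma\|_{\mathcal{S}^{\omega,h}_{1/v_\mu}}$ by $\|\gamma\|_{\mathcal{S}^{\omega,h_3}_{v_3}}\,\frac{1}{v_4(k_0)}\,e^{\frac{L}{h_1}\omega(n_0)}$ for an appropriate $v_3$ coming from $[\condM]$ applied to $\check\V$) with the triangle inequality and the summability of $\sum_{(k_0,n_0)}\frac{v_4(k_0)}{v_\lambda(k_0)}e^{\frac{L}{h_1}\omega(n_0)-\frac{1}{k}\omega(n_0)}$, which holds by \eqref{discrete-l1} once $k$ is chosen small enough (Beurling) or large enough (Roumieu) relative to $h_1$ and $L$; this is precisely the content already packaged in Lemma~\ref{main-lemma-20} for the continuous case, so I would either invoke an analogous sampled version or redo the short computation.

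Once continuity is in hand, it remains to prove the reconstruction identity \eqref{reverse}. Here the key observation is that if $(\psi,\gamma)$ is a pair of dual windows on $a\Z^d\times b\Z^d$, then by Theorem~\ref{l:WZBiOrthRel} the Wexler--Raz relation \eqref{WR} holds, i.e.\ $\frac{1}{(ab)^d}C^{1/a,1/b}_\psi\circ D^{1/a,1/b}_\gamma=\operatorname{id}$ as an identity of bounded operators on $l^2(\frac{1}{a}\Z^d\times\frac{1}{b}\Z^d)$. Since $\ZZ^{[\omega]}_{[\V]}$ sits inside $L^2(\R^d)$ (because $[\condN]$ forces $\mathcal{S}^{[\omega]}_{[\V]}\subset L^1\cap L^\infty$, hence $\mathcal{S}^{\omega,h}_{1/v_\lambda}\subset L^2$ as well, so every element of $\ZZ^{[\omega]}_{[\V]}$ is in $L^2$), and since $C_{\ZZ^{[\omega]}_{[\V]}}(\frac1a\Z^d_x\times\frac1b\Z^d_\xi)$ embeds continuously into $l^2$ once the relevant weights are bounded below by $1$ (which they are, by $v_\lambda\geq 1$ and $\omega\geq 0$, up to passing to a larger index so that a polynomial decay margin is available — this needs a small argument using \eqref{discrete-l1}), the identity \eqref{WR} transfers verbatim to give \eqref{reverse}. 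Concretely, for $F\in C_{\ZZ^{[\omega]}_{[\V]}}(\frac1a\Z^d_x\times\frac1b\Z^d_\xi)$ we have $D^{1/a,1/b}_\gamma F\in\ZZ^{[\omega]}_{[\V]}\subset L^2$ by the continuity just proved, and then $C^{1/a,1/b}_\psi(D^{1/a,1/b}_\gamma F)=(ab)^d F$ by \eqref{WR} applied to the $l^2$-element $F$; both sides lie in $C_{\ZZ^{[\omega]}_{[\V]}}(\frac1a\Z^d_x\times\frac1b\Z^d_\xi)$, so this is the desired identity in that space.

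I expect the main obstacle to be purely bookkeeping: threading the quantifier chains for the four indices $h,k,\lambda,\mu$ through the three nested limits of the $(PLB)$-structure so that the hypotheses $[\condM]$, $[\condwM]$ and $[\condN]$, together with Lemma~\ref{M12}, can be invoked in the correct order — Beurling and Roumieu cases requiring the quantifiers to be traversed in opposite directions, exactly as indicated by the parenthetical ``$(\forall\mu~\exists\lambda~\forall k~\exists h)$'' in the proof of Proposition~\ref{STFT-Z}. The analytic content is entirely supplied by Lemmas~\ref{main-lemma-1}, \ref{main-lemma-2}, \ref{main-lemma-20} and by \eqref{discrete-l1}; no new estimate is needed. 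A secondary subtlety worth stating carefully is the justification that $C_{\ZZ^{[\omega]}_{[\V]}}$ of the lattice embeds into $l^2$ so that Theorem~\ref{l:WZBiOrthRel} is genuinely applicable — but this follows from \eqref{discrete-l1} by choosing, within each space of the spectrum, an index with enough room for the weight ratio to be $l^2$ (indeed $l^1$) summable, which is automatic under $[\condwM]$ and $[\condN]$.
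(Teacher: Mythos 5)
Your continuity argument is essentially the paper's: reduce, through the quantifier chains of the $(PLB)$-structure, to continuity of the single-Banach-space maps, then invoke Lemma \ref{main-lemma-1} for $C^{a,b}_\psi$ and Lemma \ref{main-lemma-2} together with \eqref{discrete-l1} for $D^{a,b}_\gamma$, using Lemma \ref{M12}, $[\condM]$ and $[\condN]$. One small correction there: for $C^{a,b}_\psi$ you do not replace the $L^1$-integrability of $v_4/v_3$ by \eqref{discrete-l1} — the inner integral in $V_\psi\varphi(k_0,n_0)$ is still an integral over $t\in\R^d$, so the continuous estimate of Lemma \ref{main-lemma-1} is what is needed, and one simply restricts it to lattice points; \eqref{discrete-l1} is only needed for the synthesis operator, where the sum over the lattice must be controlled.

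The genuine gap is in your proof of \eqref{reverse}. You claim that $\ZZ^{[\omega]}_{[\V]}\subset L^2(\R^d)$ and that $C_{\ZZ^{[\omega]}_{[\V]}}\bigl(\tfrac1a\Z^d_x\times\tfrac1b\Z^d_\xi\bigr)$ embeds into $l^2$ because ``the relevant weights are bounded below by $1$''. This misreads the direction of the weights: the defining Banach spaces are $\mathcal{S}^{\omega,h}_{1/v_\lambda}$ and $C_{\frac{1}{v_\lambda}\otimes e^{\frac1h\omega}}$, so in the space variable the weight is $1/v_\lambda\leq 1$ and elements are allowed to \emph{grow} like $v_\lambda$ (this is exactly the $\mathcal{O}_M$-type behaviour; e.g.\ polynomials lie in $\mathcal{O}_M$ but not in $L^2$). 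Hence a general $F\in C_{\ZZ^{[\omega]}_{[\V]}}\bigl(\tfrac1a\Z^d_x\times\tfrac1b\Z^d_\xi\bigr)$ is not an $l^2$-sequence and $D^{1/a,1/b}_\gamma F$ is not in $L^2$, so the Wexler--Raz identity \eqref{WR}, which is an identity of bounded operators on $l^2$, cannot be applied to $F$ directly; your transfer ``verbatim'' breaks down at this point. The paper's remedy is a density argument: by \eqref{C0-wfs} the finite sequences on $\tfrac1a\Z^d_x\times\tfrac1b\Z^d_\xi$ are dense in $C_{\ZZ^{[\omega]}_{[\V]}}\bigl(\tfrac1a\Z^d_x\times\tfrac1b\Z^d_\xi\bigr)$; on finite sequences (which do lie in $l^2$) the identity \eqref{reverse} is precisely \eqref{WR} from Theorem \ref{l:WZBiOrthRel}; and the operator $\tfrac{1}{(ab)^d}C^{1/a,1/b}_\psi\circ D^{1/a,1/b}_\gamma$ is continuous on $C_{\ZZ^{[\omega]}_{[\V]}}\bigl(\tfrac1a\Z^d_x\times\tfrac1b\Z^d_\xi\bigr)$ by the first part of the proposition, so the identity extends by continuity. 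Without this (or some comparable extension argument replacing your false $l^2$-embedding) the reconstruction formula is not proved.
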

\begin{proof}
It suffices to show that
$$
\forall h ~ \exists k ~ \forall \lambda ~\exists \mu  ~ (\forall \mu ~ \exists \lambda ~ \forall k ~\exists h) ~  : ~  C^{a,b}_\psi: \mathcal{S}^{\omega,k}_{1/v_\lambda} \rightarrow  C_{ \frac{1}{v_\mu} \otimes e^{\frac{1}{h}\omega}}(a\Z^d_x \times b\Z^d_\xi) \mbox{ is continuous,}
$$
and 
$$
\forall h ~ \exists k ~ \forall \lambda ~\exists \mu  ~ (\forall \mu ~ \exists \lambda ~ \forall k ~\exists h) ~  : ~  D^{a,b}_\gamma: C_{ \frac{1}{v_\lambda} \otimes e^{\frac{1}{k}\omega}}(a\Z^d_x \times b\Z^d_\xi) \rightarrow \mathcal{S}^{\omega,h}_{1/v_\mu}    \mbox{ is continuous.}
$$
By Lemma \ref{M12} and the fact that $\V$ satisfies $[\condM]$ and $[\condN]$, the first statement follows from Lemma \ref{main-lemma-1} while the second statement follows from Lemma \ref{main-lemma-2} together with \eqref{discrete-l1} (cf.\ Lemma \ref{main-lemma-20}). Since, by \eqref{C0-wfs}, the set of finite sequences on $a\Z^d_x \times b\Z^d_\xi$ is dense in $C_{\ZZ^{[\omega]}_{[\V]}}(a\Z^d_x \times b\Z^d_\xi)$,  the identity \eqref{reverse} follows from Lemma \ref{l:WZBiOrthRel}.
\end{proof}
\begin{proof}[Proof of part $(b)$ of Theorem \ref{t:ZSpTop}] As $\mathcal{S}^{[\omega]}_{[\V]}$ is Gabor accessible, Proposition \ref{GF-Z}  implies that there are $a,b >0$ such that $C_{\ZZ^{[\omega]}_{[\V]}}\left( \frac{1}{a}\Z^d_x \times \frac{1}{b}\Z^d_\xi\right)$ is isomorphic to a complemented subspace of $\ZZ^{[\omega]}_{[\V]}$. Since a complemented subspace of a barrelled space is again barrelled, we may conclude that $C_{\ZZ^{[\omega]}_{[\V]}}\left( \frac{1}{a}\Z^d_x \times \frac{1}{b}\Z^d_\xi\right)$ is barrelled. Note that 
\begin{gather*}
C_{\ZZ^{(\omega)}_{(\V)}}\left( \frac{1}{a}\Z^d_x \times \frac{1}{b}\Z^d_\xi\right)= \A_{(\V_\omega \mid \frac{1}{b}\Z^d ,\V | \frac{1}{a}\Z^d)}C\left( \frac{1}{b}\Z^d_\xi \times \frac{1}{a}\Z^d_x  \right), \\
C_{\ZZ^{\{\omega\}}_{\{\V\}}}\left( \frac{1}{a}\Z^d_x \times \frac{1}{b}\Z^d_\xi\right) = \A_{\{\V | \frac{1}{a}\Z^d,\V_\omega \mid \frac{1}{b}\Z^d\}}C\left( \frac{1}{a}\Z^d_x \times \frac{1}{b}\Z^d_\xi \right).
\end{gather*}
Hence, $\A_{(\V_\omega \mid \frac{1}{b}\Z^d ,\V | \frac{1}{a}\Z^d)}$ ($\A_{\{\V | \frac{1}{a}\Z^d,\V_\omega \mid \frac{1}{b}\Z^d\}}$) satisfies $(wQ)$ by Theorem \ref{t:BarrelImplywQ}. Properties \eqref{alpha-extended} and \eqref{joo}  imply that then also $\A_{(\V_\omega ,\V)}$ ($\A_{\{\V,\V_\omega \}}$) satisfies $(wQ)$. The result now follows from  Lemma \ref{l:EquivWeighFuncGridCond}.
\end{proof}
Finally, we show Theorem \ref{main-2}. We need various results in preparation.

\begin{lemma}\label{main-lemma-3} Let $\omega$ be a weight function. Let $v_i: \R^d \to (0,\infty)$, $i = 1,2,3$, be continuous functions such that
$$
v_2(x) \leq C_0v_1(x) v_3(x), \qquad x \in \R^d,
$$
for some $C_0 > 0$. Let $h_i > 0$, $i= 1,2,3$, be such that
$$
\frac{1}{\max\{h_1,h_3\}} \phi^* ( \max\{h_1,h_3\}y) + (\log2) y \leq \frac{1}{h_2} \phi^* (h_2y) 
+ \log C_1, \qquad y \geq 0, 
$$
for some $C_1 > 0$. Then,  the mapping 
$$
\mathcal{S}^{\omega,h_1}_{v_1} \times  \mathcal{S}^{\omega,h_3}_{v_3} \rightarrow  \mathcal{S}^{\omega,h_2}_{v_2}, ~  (\varphi, \psi) \mapsto \varphi \cdot \psi
$$
is continuous.
\end{lemma}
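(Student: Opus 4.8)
The plan is to establish the \emph{bilinear} bound
\[
\norm{\varphi \cdot \psi}_{\mathcal{S}^{\omega,h_2}_{v_2}} \leq C_0 C_1 \norm{\varphi}_{\mathcal{S}^{\omega,h_1}_{v_1}} \norm{\psi}_{\mathcal{S}^{\omega,h_3}_{v_3}}, \qquad \varphi \in \mathcal{S}^{\omega,h_1}_{v_1},~ \psi \in \mathcal{S}^{\omega,h_3}_{v_3};
\]
since $(\varphi,\psi) \mapsto \varphi \cdot \psi$ is bilinear between Banach spaces, this boundedness is equivalent to its continuity. The computation runs along the same lines as the proof of Lemma \ref{main-lemma-2}, only simpler, as there are no frequency powers or translations to track.

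First I would fix $\varphi$, $\psi$, a multi-index $\alpha \in \N^d$ and a point $x \in \R^d$, and expand by the Leibniz rule:
\[
(\varphi\psi)^{(\alpha)}(x) = \sum_{\beta \leq \alpha} \binom{\alpha}{\beta} \varphi^{(\beta)}(x)\,\psi^{(\alpha-\beta)}(x).
\]
Multiplying through by $v_2(x)$, using the hypothesis $v_2 \leq C_0 v_1 v_3$, and estimating each factor by the corresponding norm, every summand is at most
\[
C_0 \norm{\varphi}_{\mathcal{S}^{\omega,h_1}_{v_1}} \norm{\psi}_{\mathcal{S}^{\omega,h_3}_{v_3}} \exp\!\left(\frac{1}{h_1}\phi^*(h_1|\beta|) + \frac{1}{h_3}\phi^*(h_3|\alpha-\beta|)\right).
\]

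The key step is then to merge the two Young-conjugate terms. Put $h := \max\{h_1,h_3\}$. Since $y \mapsto \phi^*(y)/y$ is increasing, $\tfrac{1}{h_i}\phi^*(h_i y) \leq \tfrac{1}{h}\phi^*(hy)$ for all $y \geq 0$; and since $\phi^*$ is convex with $\phi^*(0) \leq 0$, it is superadditive, $\phi^*(a)+\phi^*(b) \leq \phi^*(a+b)$. As $|\beta| + |\alpha-\beta| = |\alpha|$, this gives
\[
\frac{1}{h_1}\phi^*(h_1|\beta|) + \frac{1}{h_3}\phi^*(h_3|\alpha-\beta|) \leq \frac{1}{h}\big(\phi^*(h|\beta|) + \phi^*(h|\alpha-\beta|)\big) \leq \frac{1}{h}\phi^*(h|\alpha|).
\]
Summing over $\beta \leq \alpha$ contributes the factor $\sum_{\beta\leq\alpha}\binom{\alpha}{\beta} = 2^{|\alpha|} = e^{(\log 2)|\alpha|}$, so that
\[
|(\varphi\psi)^{(\alpha)}(x)|\,v_2(x) \leq C_0 \norm{\varphi}_{\mathcal{S}^{\omega,h_1}_{v_1}} \norm{\psi}_{\mathcal{S}^{\omega,h_3}_{v_3}} \exp\!\left(\frac{1}{h}\phi^*(h|\alpha|) + (\log 2)|\alpha|\right).
\]
Applying the assumed inequality with $y = |\alpha|$ bounds the exponential by $C_1\exp(\tfrac{1}{h_2}\phi^*(h_2|\alpha|))$, and taking the supremum over $\alpha \in \N^d$ and $x \in \R^d$ yields the asserted estimate.

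I do not expect any genuine obstacle here; the proof is essentially bookkeeping. The only points deserving a word of care are the two elementary convexity facts used to collapse the exponentials — the monotonicity of $h \mapsto \tfrac{1}{h}\phi^*(hy)$ and the superadditivity of $\phi^*$ — and, should $\phi^*(0)$ fail to vanish, the trivial case $\alpha = 0$, where the constant $(\tfrac{1}{h_1}+\tfrac{1}{h_3})\phi^*(0)$ is simply absorbed into $C_1$.
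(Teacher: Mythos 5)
Your proof is correct and follows essentially the same route as the paper: Leibniz expansion, the pointwise bound $v_2 \leq C_0 v_1 v_3$, merging the two Young-conjugate terms via the monotonicity of $y \mapsto \phi^*(y)/y$ (which also gives superadditivity), absorbing $\sum_{\beta\leq\alpha}\binom{\alpha}{\beta}=2^{|\alpha|}$ into the $(\log 2)|\alpha|$ term, and invoking the hypothesis with $y=|\alpha|$. The only difference is that you spell out the elementary convexity facts the paper uses implicitly, which is harmless.
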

\begin{proof}
Let $\varphi \in \mathcal{S}^{\omega,h_1}_{v_1}$ and $\psi \in   \mathcal{S}^{\omega,h_3}_{v_3}$ be arbitrary. For all $\alpha \in \N^d$ and $x \in \R^d$ it holds that
\begin{align*}
&|(\varphi\psi)^{(\alpha)}(x)|v_2(x) \\
&\leq C_0\sum_{\beta \leq \alpha} \binom{\alpha}{\beta} |\varphi^{(\beta)}(x)| v_1(x) |\psi^{(\alpha-\beta)}(x)| v_3(x) \\
&\leq C_0\| \varphi \|_{\mathcal{S}^{\omega,h_1}_{v_1}}\| \psi \|_{\mathcal{S}^{\omega,h_3}_{v_3}} \sum_{\beta \leq \alpha} \binom{\alpha}{\beta} \exp \left( \frac{1}{h_1} \phi^*(h_1|\beta|) + \frac{1}{h_3} \phi^*(h_3|\alpha -\beta|) \right)  \\
&\leq  C_0\| \varphi \|_{\mathcal{S}^{\omega,h_1}_{v_1}}\| \psi \|_{\mathcal{S}^{\omega,h_3}_{v_3}} \exp \left(  \frac{1}{\max\{h_1,h_3\}} \phi^* ( \max\{h_1,h_3\}|\alpha|) + (\log2) |\alpha| \right) \\
&\leq C_0C_1 \| \varphi \|_{\mathcal{S}^{\omega,h_1}_{v_1}}\| \psi \|_{\mathcal{S}^{\omega,h_3}_{v_3}} \exp \left(  \frac{1}{h_2} \phi^* (h_2|\alpha|) \right).
\end{align*}
This shows that the mapping $\mathcal{S}^{\omega,h_1}_{v_1} \times  \mathcal{S}^{\omega,h_3}_{v_3} \rightarrow  \mathcal{S}^{\omega,h_2}_{v_2}, ~  (\varphi, \psi) \mapsto \varphi \cdot \psi$ is continuous.
\end{proof}
Next, we extend the STFT and its adjoint to $\mathcal{S}'^{[\omega]}_{[\V]}$.
Given  a weight function $\omega$ and a weight system  $\V$  satisfying $[\condM]$ and $[\condN]$, we define
\begin{gather*}
C_{\mathcal{S}'^{(\omega)}_{(\V)}}(\R^{2d}_{x,\xi}) := \varinjlim_{h \to 0^+} C_{ \frac{1}{v_h} \otimes e^{-\frac{1}{h}\omega}}(\R^{2d}_{x,\xi}), \\
C_{\mathcal{S}'^{\{\omega\}}_{\{\V\}}}(\R^{2d}_{x,\xi}) :=  \varprojlim_{h \to \infty} C_{ \frac{1}{v_h} \otimes e^{-\frac{1}{h}\omega}}(\R^{2d}_{x,\xi}).
\end{gather*}
The STFT of an element $f \in  \mathcal{S}'^{[\omega]}_{[\V]}$ with respect to  $\psi \in  \mathcal{S}^{[\omega]}_{[\V]}$ is defined as
\begin{equation*}
V_\psi f(x,\xi):= \langle f, \overline{M_\xi T_x\psi}\rangle, \qquad (x, \xi) \in \R^{2d}.
\end{equation*}
We define the adjoint STFT of $F \in C_{\mathcal{S}'^{[\omega]}_{[\V]}}(\R^{2d}_{x,\xi})$ with respect to  $\gamma \in \mathcal{S}^{[\omega]}_{[\widetilde \V]}$ as
$$
\langle V^\ast_\gamma F, \varphi \rangle := \langle F, \overline{V_\gamma\overline{\varphi}} \rangle = \int \int_{\R^{2d}} F(x,\xi) V_{\overline{\gamma}}\varphi(x, -\xi) \dx \dxi, \qquad \varphi \in \mathcal{S}^{[\omega]}_{[\V]}.
$$
\begin{proposition} \label{STFT-dual}
Let $\omega$ be a weight function and let $\V$ be a weight system satisfying $[\condM]$ and $[\condN]$. Let $\psi \in \mathcal{S}^{[\omega]}_{[\V]}$ and $\gamma \in \mathcal{S}^{[\omega]}_{[\widetilde \V]}$. Then, the mappings
$$
V_\psi: \mathcal{S}'^{[\omega]}_{[\V]} \rightarrow C_{\mathcal{S}'^{[\omega]}_{[\V]}}(\R^{2d}_{x,\xi}), \qquad
V^*_\gamma:  C_{\mathcal{S}'^{[\omega]}_{[\V]}}(\R^{2d}_{x,\xi}) \rightarrow \mathcal{S}'^{[\omega]}_{[\V]}
$$
are continuous. Moreover, if $(\gamma, \psi)_{L^2} \neq 0$, then
\begin{equation}
\label{eq:reconstructSTFTTempUltra}
\frac{1}{(\gamma, \psi)_{L^{2}}} V^{*}_{\gamma} \circ V_{\psi} = \id_{\mathcal{S}'^{[\omega]}_{[\V]}}.
\end{equation}
\end{proposition}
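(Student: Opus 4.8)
The plan is to deduce Proposition~\ref{STFT-dual} by transposition from the mapping estimates already established for the test space --- namely Lemmas~\ref{main-lemma-1}, \ref{main-lemma-2} and \ref{main-lemma-20} --- together with the reconstruction identity of Proposition~\ref{STFT-Z}. Throughout one uses that $\mathcal{S}^{[\omega]}_{[\V]}$ and $\mathcal{S}^{[\omega]}_{[\check\V]}$ are invariant under complex conjugation, so that $\overline\psi\in\mathcal{S}^{[\omega]}_{[\V]}$ and $\overline\gamma\in\mathcal{S}^{[\omega]}_{[\check\V]}$, and the elementary identity $\overline{M_\xi T_x\psi}=M_{-\xi}T_x\overline\psi$. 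As in the proof of Proposition~\ref{STFT-Z}, the two continuity assertions reduce to continuity between the Banach building blocks $(\mathcal{S}^{\omega,h}_{v_h})'$ of $\mathcal{S}'^{[\omega]}_{[\V]}$ and $C_{\frac1{v_h}\otimes e^{-\frac1h\omega}}(\R^{2d}_{x,\xi})$ of $C_{\mathcal{S}'^{[\omega]}_{[\V]}}(\R^{2d}_{x,\xi})$, with the quantifier order over $h$ (Beurling: $\forall h\,\exists k$; Roumieu: $\forall k\,\exists h$) and the auxiliary weight indices dictated by conditions $[\condM]$ and $[\condN]$ through Lemma~\ref{M12}.

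First I would treat $V_\psi$. Since $V_\psi f(x,\xi)=\langle f,M_{-\xi}T_x\overline\psi\rangle$, Lemma~\ref{main-lemma-2} applied to the window $\overline\psi$ yields, for an appropriate choice of parameters governed by $[\condM]$ and Lemma~\ref{M12}, an estimate $|V_\psi f(x,\xi)|\le C\,\|f\|_{(\mathcal{S}^{\omega,h}_{v_h})'}\,v_\nu(x)\,e^{c\,\omega(\xi)}$, which places $V_\psi f$ in a Banach step $C_{\frac1{v_k}\otimes e^{-\frac1k\omega}}(\R^{2d}_{x,\xi})$ of $C_{\mathcal{S}'^{[\omega]}_{[\V]}}(\R^{2d}_{x,\xi})$ in the correct order, and hence proves $V_\psi$ continuous. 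For $V^*_\gamma$, which by its very definition $\langle V^*_\gamma F,\varphi\rangle=\langle F,\overline{V_\gamma\overline\varphi}\rangle$ is a transpose, I would bound the defining pairing directly: $|\langle V^*_\gamma F,\varphi\rangle|\le\iint|F(x,\xi)|\,|V_{\overline\gamma}\varphi(x,-\xi)|\,\dx\dxi$. By Lemma~\ref{main-lemma-1} applied to the window $\overline\gamma$, $V_{\overline\gamma}$ sends each Banach step $\mathcal{S}^{\omega,k}_{v_k}$ of $\mathcal{S}^{[\omega]}_{[\V]}$ continuously into a weighted space with decay of order $\frac1{v_\mu(x)}e^{-\frac1{h_2}\omega(\xi)}$; choosing $h_2$ small and $\mu$ via $[\condN]$ so that $v_h/v_\mu\in L^1(\R^d)$, and using that $e^{-c\omega(\xi)}\in L^1(\R^d)$ for every $c>0$ --- which holds because $\log t=o(\omega(t))$ by property $(\gamma)$ --- the double integral is dominated by $C\,\|F\|\,\|\varphi\|_{\mathcal{S}^{\omega,k}_{v_k}}$ with the right quantifier structure, so $V^*_\gamma$ is continuous.

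For the reconstruction formula, fix $f\in\mathcal{S}'^{[\omega]}_{[\V]}$ and $\varphi\in\mathcal{S}^{[\omega]}_{[\V]}$. By definition $\langle V^*_\gamma V_\psi f,\varphi\rangle=\iint\langle f,M_{-\xi}T_x\overline\psi\rangle\,V_{\overline\gamma}\varphi(x,-\xi)\,\dx\dxi$. The $\mathcal{S}^{[\omega]}_{[\V]}$-valued map $(x,\xi)\mapsto V_{\overline\gamma}\varphi(x,-\xi)\,M_{-\xi}T_x\overline\psi$ is Bochner integrable, because its $\mathcal{S}^{\omega,h}_{v_h}$-norm is dominated by the product of the decay estimate for $V_{\overline\gamma}\varphi$ from Lemma~\ref{main-lemma-1} and the growth estimate for $M_{-\xi}T_x\overline\psi$ from Lemma~\ref{main-lemma-2}, which is integrable in $x$ by $[\condN]$ and in $\xi$ by $(\gamma)$; hence one may interchange $f$ with the integral. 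After the substitution $\xi\mapsto-\xi$ the inner integral becomes $\iint V_{\overline\gamma}\varphi(x,\xi)\,M_\xi T_x\overline\psi\,\dx\dxi$, which equals $(\overline\psi,\overline\gamma)_{L^2}\varphi=(\gamma,\psi)_{L^2}\varphi$ by the same Fourier inversion computation as in the proof of Proposition~\ref{STFT-Z} (here applied with the windows $\overline\gamma,\overline\psi\in\mathcal{S}^{[\omega]}_{[\V]}\subset L^1\cap L^\infty$). Therefore $\langle V^*_\gamma V_\psi f,\varphi\rangle=(\gamma,\psi)_{L^2}\langle f,\varphi\rangle$, which is \eqref{eq:reconstructSTFTTempUltra}.

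The main obstacle is the bookkeeping of the parameters --- the $\phi^*$-index, the weight index, and the interlocking of the quantifiers --- through the composition of Lemmas~\ref{main-lemma-1}, \ref{main-lemma-2}, \ref{main-lemma-20} and \ref{M12} with conditions $[\condM]$ and $[\condN]$, so that the Banach-step maps line up with the inductive, respectively projective, structure of $\mathcal{S}'^{[\omega]}_{[\V]}$ and $C_{\mathcal{S}'^{[\omega]}_{[\V]}}(\R^{2d}_{x,\xi})$; this is essentially the bookkeeping already carried out for Proposition~\ref{STFT-Z}, now with the roles of test functions and functionals transposed. A secondary delicate point is the Fubini/Bochner justification in the reconstruction step, where integrability is required in both variables: the $x$-integrability is supplied by $[\condN]$, while the $\xi$-integrability is not a non-quasianalyticity issue but follows from the defining growth property $(\gamma)$ of a weight function.
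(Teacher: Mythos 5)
Your continuity arguments follow the paper's in substance: the bound on $\|M_\xi T_x\psi\|_{\mathcal{S}^{\omega,h_2}_{v_2}}$ from Lemma \ref{main-lemma-2} controls $V_\psi f(x,\xi)=\langle f, M_{-\xi}T_x\overline{\psi}\rangle$, and the decay of $V_{\overline\gamma}\varphi$ from Lemma \ref{main-lemma-1} together with $[\condN]$ and property $(\gamma)$ controls the pairing defining $V^*_\gamma$. One point must be made explicit in the Beurling case of $V_\psi$: the domain $\mathcal{S}'^{(\omega)}_{(\V)}$ carries the strong dual topology, which is not by definition the inductive limit topology of the Banach duals $(\mathcal{S}^{\omega,h}_{v_h})'$, so your ``reduction to the Banach building blocks'' only shows that $V_\psi$ maps bounded sets to bounded sets. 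The paper says exactly this and then invokes that $\mathcal{S}'^{(\omega)}_{(\V)}$ is bornological (the Fr\'echet space $\mathcal{S}^{(\omega)}_{(\V)}$ being nuclear, hence distinguished) to upgrade boundedness to continuity; without such a remark the step ``hence proves $V_\psi$ continuous'' is incomplete, with it your argument is fine. No such issue arises for $V^*_\gamma$, since your pairing estimate directly bounds $\sup_{\varphi\in B}|\langle V^*_\gamma F,\varphi\rangle|$ over bounded sets $B\subset\mathcal{S}^{[\omega]}_{[\V]}$.

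For the reconstruction formula \eqref{eq:reconstructSTFTTempUltra} you take a genuinely different route. You compute $\langle V^*_\gamma V_\psi f,\varphi\rangle$ directly, interchanging $f$ with a vector-valued integral (Bochner/Pettis integrability of $(x,\xi)\mapsto V_{\overline\gamma}\varphi(x,-\xi)\,M_{-\xi}T_x\overline\psi$, justified by the Lemma \ref{main-lemma-1}/\ref{main-lemma-2} estimates, $[\condN]$ and $(\gamma)$) and then applying the pointwise identity from the proof of Proposition \ref{STFT-Z}; this is viable, but it carries the cost of the vector-valued Fubini bookkeeping, and in the Roumieu case one must verify that the integrand lies in a single Banach step with integrable norm, i.e.\ that the $\xi$-growth of $\|M_{-\xi}T_x\overline\psi\|$ is beaten by the $\xi$-decay of $V_{\overline\gamma}\varphi$ for an admissible choice of indices. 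The paper avoids all integration at the dual level: it notes that $A=(\gamma,\psi)_{L^2}^{-1}V^*_\gamma\circ V_\psi$ is continuous on $\mathcal{S}'^{[\omega]}_{[\V]}$ and restricts to the identity on $L^2$ by \eqref{eq:reconstructSTFTL2}, and concludes by density of $L^2$ in $\mathcal{S}'^{[\omega]}_{[\V]}$, which follows from reflexivity (nuclearity) of $\mathcal{S}^{[\omega]}_{[\V]}$ since the transpose of the inclusion $L^2\rightarrow\mathcal{S}'^{[\omega]}_{[\V]}$ is the injective inclusion $\mathcal{S}^{[\omega]}_{[\V]}\rightarrow L^2$. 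The density argument is shorter and recycles the continuity statements already proved; your computation is more self-contained but hinges on the integrability checks you only sketch.
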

\begin{proof}
By Lemma \ref{M12} and the fact that $\V$ satisfies $[\condM]$, Lemma \ref{main-lemma-2} implies that $V_\psi: \mathcal{S}'^{\{\omega\}}_{\{\V\}} \rightarrow C_{\mathcal{S}'^{\{\omega\}}_{\{\V\}}}(\R^{2d}_{x,\xi})$ is continuous  and that  $V_\psi: \mathcal{S}'^{(\omega)}_{(\V)} \rightarrow C_{\mathcal{S}'^{(\omega)}_{(\V)}}(\R^{2d}_{x,\xi})$ maps bounded sets into bounded sets. Since $\mathcal{S}'^{(\omega)}_{(\V)}$ is bornological, we obtain that  also $V_\psi: \mathcal{S}'^{(\omega)}_{(\V)} \rightarrow C_{\mathcal{S}'^{(\omega)}_{(\V)}}(\R^{2d}_{x,\xi})$ is continuous. 
Next, we treat $V^*_\gamma$. We define
$$
C_{\mathcal{S}^{(\omega)}_{(\V)}}(\R^{2d}_{x,\xi}) := \varprojlim_{h \to 0^+} C_{ v_h \otimes e^{\frac{1}{h}\omega}}(\R^{2d}_{x,\xi}), \qquad C_{\mathcal{S}^{\{\omega\}}_{\{\V\}}}(\R^{2d}_{x,\xi}) :=  \varinjlim_{h \to \infty} C_{ v_h \otimes e^{\frac{1}{h}\omega}}(\R^{2d}_{x,\xi}).
$$
We claim that the mapping
$$
\mathcal{S}^{[\omega]}_{[\V]} \rightarrow C_{\mathcal{S}^{[\omega]}_{[\V]}}(\R^{2d}_{x,\xi}), ~ \varphi \mapsto V_{\overline{\gamma}}\varphi(x, -\xi),
$$
is continuous.  It suffices to show that
$$
\forall h ~ \exists k ~ (\forall k ~\exists h) ~  : ~  \mathcal{S}^{\omega,k}_{v_k} \rightarrow  C_{ v_h \otimes e^{\frac{1}{h}\omega}}(\R^{2d}_{x,\xi}),  ~ \varphi \mapsto V_{\overline{\gamma}}\varphi(x, -\xi) \mbox{ is continuous.}
$$
By Lemma \ref{M12} and the fact that $\V$ satisfies $[\condM]$ and $[\condN]$, this follows from Lemma \ref{main-lemma-1}. Hence, the continuity of $V^{*}_{\gamma} : C_{\mathcal{S}^{\prime [\omega]}_{[\V]}}(\R^{2d}_{x, \xi}) \rightarrow \mathcal{S}^{\prime [\omega]}_{[\V]}$ is a consequence of the continuity of the mapping
$$
C_{\mathcal{S}'^{[\omega]}_{[\V]}}(\R^{2d}_{x,\xi}) \rightarrow (C_{\mathcal{S}^{[\omega]}_{[\V]}}(\R^{2d}_{x,\xi}))'_b, ~  F \mapsto \left ( f \mapsto \int \int_{\R^{2d}} F(x,\xi) f(x,\xi) \dx \dxi \right ).
$$

Finally, we show \eqref{eq:reconstructSTFTTempUltra}. The Hahn-Banach theorem and the fact that  $\mathcal{S}^{[\omega]}_{[\V]}$ is reflexive (as it is nuclear \cite[Theorem 5.1]{D-N-V-NuclGSKerThm}), imply that $L^2$ is dense in  $\mathcal{S}^{\prime [\omega]}_{[\V]}$. Hence, \eqref{eq:reconstructSTFTTempUltra} follows from \eqref{eq:reconstructSTFTL2}.
\end{proof}
We also need the following result about the projective description of $C_{\mathcal{Z}^{[\omega]}_{[\V]}}(\R^{2d}_{x,\xi})$.
\begin{lemma}\label{projective}
\mbox{}
\begin{itemize}
\item[$(i)$]   Let $\V$ be a weight system satisfying $[\condwM]$ and $[\condN]$. Set
$$
\overline{V}(\mathcal{V}) := \{ v : \R^{d} \rightarrow (0,\infty) \mbox{ continuous} ~  | ~  \sup_{x \in \R^d} v(x)v_\lambda(x) < \infty \mbox{ for all $\lambda > 0$} \}.
$$
Then,
$$
C_{\mathcal{Z}^{(\omega)}_{(\V)}}(\R^{2d}_{x,\xi}) =  \varprojlim_{h \to 0^+} \varprojlim_{v \in \overline{V}(\mathcal{V}) } C_{ v \otimes e^{\frac{1}{h}\omega}}(\R^{2d}_{x,\xi}).
$$
\item[$(ii)$]  Let $\omega$ be a weight function. Set
$$
\overline{V}_\omega := \{ \sigma : [0,\infty) \rightarrow [0,\infty) \mbox{ weight function} ~   | ~  \sigma= o(\omega) \}.
$$
Then,
$$
C_{\mathcal{Z}^{\{\omega\}}_{\{\V\}}}(\R^{2d}_{x,\xi}) =  \varprojlim_{\lambda \to \infty} \varprojlim_{\sigma \in \overline{V}_\omega} C_{ \frac{1}{v_\lambda} \otimes e^{\sigma}}(\R^{2d}_{x,\xi}).
$$
\end{itemize}
\end{lemma}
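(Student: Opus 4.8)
The two assertions are instances of the \emph{projective (Nachbin) description} of a weighted inductive limit of spaces of continuous functions, and I would prove them along the same lines. In each case the claimed equality amounts to replacing, inside the definition of $C_{\mathcal Z^{[\omega]}_{[\V]}}(\R^{2d}_{x,\xi})$, the \emph{inner} limit by a projective limit over the associated system of weights, leaving the \emph{outer} projective limit untouched: over $h$ in part $(i)$ and over $\lambda$ in part $(ii)$. Since the isomorphisms I construct at each inner level are the identity map on functions, they commute with the (inclusion) linking maps of the outer spectrum, so the outer projective limit is carried along verbatim and the whole proof reduces to an inner statement for a single value of the outer parameter.

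The next step is to peel off the weight that does not vary in the inner limit. In part $(i)$, fixing $h$, the factor $e^{\frac1h\omega(\xi)}$ is independent of $\lambda$, so for every continuous weight $w$ on $\R^d_x$ one has $\norm{F}_{w\otimes e^{\frac1h\omega}}=\sup_{x\in\R^d}w(x)\,G(x)$ with $G(x):=\sup_{\xi\in\R^d}|F(x,\xi)|e^{\frac1h\omega(\xi)}$; for $F$ in any of the defining Banach spaces the $\xi$-decay makes $G$ finite and continuous, the supremum being effectively over a compact $\xi$-set. Thus $F\mapsto G$ transports the weighted sup-norms $\norm{\cdot}_{w\otimes e^{\frac1h\omega}}$ on $\R^{2d}$ to weighted sup-norms $\norm{\cdot}_{w}$ on $\R^d_x$, uniformly in $w$; since both the inner inductive-limit topology and the projective-hull topology are determined step-by-step through these norms, part $(i)$ reduces to the one-variable identity $\varinjlim_{\lambda\to 0^+}C_{1/v_\lambda}(\R^d)=\varprojlim_{v\in\overline{V}(\mathcal{V})}C_{v}(\R^d)$ as locally convex spaces. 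Symmetrically, in part $(ii)$ the factor $1/v_\lambda$ is fixed within a step, and the same device (now suping over $x$) reduces the claim to $\varinjlim_{h\to\infty}C_{e^{\frac1h\omega}}(\R^d)=\varprojlim_{\sigma\in\overline{V}_\omega}C_{e^{\sigma}}(\R^d)$.

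Both one-variable identities are covered by the Bierstedt--Meise--Summers projective-description theory for \emph{decreasing} families of weights (cf.\ \cite{A-B-B-ProjLimWeighLBSpContFunc}). Writing the families in sequence form, $\rho_n=1/v_{1/n}$ in $(i)$ and $\rho_n=e^{\frac1n\omega}$ in $(ii)$, the decisive hypothesis is condition $(D)$: for each $n$ there is $m\ge n$ with $\rho_m/\rho_n\in C_0(\R^d)$. In $(i)$ this reads $v_{1/n}/v_{1/m}\in C_0$ for suitable $m\ge n$, which is exactly \eqref{C0-wfs}, a consequence of $[\condwM]$ and $[\condN]$; in $(ii)$ it reads $e^{(\frac1m-\frac1n)\omega}\in C_0$, which holds for every $m>n$ since $\omega(t)\to\infty$. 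Under $(D)$ the weighted inductive limit is regular (indeed boundedly retractive) and coincides topologically with its projective hull $\varprojlim_{\bar\rho\in\overline{\mathcal V}}C_{\bar\rho}(\R^d)$, where $\overline{\mathcal V}=\{\bar\rho\ge 0 \text{ upper semicontinuous}:\sup_x\bar\rho/\rho_n<\infty \text{ for all } n\}$. It then remains to see that the concrete families in the statement generate the same projective topology as $\overline{\mathcal V}$: in $(i)$, $\overline{\mathcal V}$ is by definition $\overline{V}(\mathcal{V})$ once one notes that strictly positive continuous representatives are cofinal in it; in $(ii)$, every $\bar\rho\in\overline{\mathcal V}$ satisfies $\log^+\bar\rho=o(\omega)$, hence $\bar\rho\le e^{\sigma}$ for a weight function $\sigma=o(\omega)$ obtained by a standard regularization of $\log^+\bar\rho$, while conversely each $e^{\sigma}$ with $\sigma\in\overline{V}_\omega$ lies in $\overline{\mathcal V}$.

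The main obstacle is the one-variable projective description in part $(i)$: recognizing \eqref{C0-wfs} as the correct incarnation of condition $(D)$ and invoking the Bierstedt--Meise--Summers machinery to obtain a \emph{topological}, not merely algebraic, identification of $\varinjlim_{\lambda}C_{1/v_\lambda}(\R^d)$ with $\varprojlim_{v\in\overline{V}(\mathcal{V})}C_v(\R^d)$ (which in particular entails that $\overline{V}(\mathcal{V})$ is a non-degenerate Nachbin family). The two bookkeeping points attached to it — that the fixed $\xi$-weight must be peeled off \emph{before} applying $(D)$, since on $\R^{2d}$ the linking ratios are constant in $\xi$ and $(D)$ fails there, and the regularization of an abstract Nachbin weight into the special form $e^{\sigma}$ with $\sigma$ a weight function in part $(ii)$ — are routine but should be spelled out.
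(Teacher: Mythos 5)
Your proposal follows essentially the same route as the paper: the paper's proof of this lemma is exactly an appeal to the Bierstedt--Meise--Summers projective description (their Theorem 1.3 together with the Proposition on p.~112 and its proof), applied through the $C_0$-ratio condition \eqref{C0-wfs} in the $x$-variable for part $(i)$, and to Braun--Meise--Taylor (Lemma 1.7 and Remark 1.8) for the regularization of Nachbin weights into weight functions $\sigma = o(\omega)$ in part $(ii)$, which is precisely the machinery and the conditions you invoke. The one caveat is that your ``reduction to a one-variable identity'' via $F \mapsto G(x)=\sup_{\xi}|F(x,\xi)|e^{\frac{1}{h}\omega(\xi)}$ is not a formal implication (no linear map implements it, and $G$ need not be continuous since the supremum is not over a compact $\xi$-set); what is actually done --- and what the citation ``and its proof'' signals --- is to rerun the BMS construction of the associated weight in the $x$-variable with the fixed factor $e^{\frac{1}{h}\omega}$ (resp.\ $1/v_\lambda$) carried along inertly, which is the routine adaptation you already flag as needing to be spelled out.
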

\begin{proof} In view of \cite[Theorem 1.3]{B-M-S}, $(i)$ follows from \eqref{C0-wfs} and \cite[Proposition, p.\ 112, and its proof]{B-M-S} and $(ii)$ follows from \cite[Lemma 1.7 and Remark 1.8]{B-M-T-UltraDiffFuncFAnal}.
\end{proof}
\begin{proof}[Proof of Theorem \ref{main-2}]
We first show that $\ZZ^{[\omega]}_{[\V]}$ is continuously included in $\mathcal{O}_{M}(\mathcal{S}^{[\omega]}_{[\V]})$. To this end, it suffices to show that
\begin{gather*}
\forall h ~ \exists k ~\forall \lambda ~ \exists \mu ~  ( \forall \mu ~ \exists \lambda ~\forall k ~ \exists h ) ~  : ~  
\mathcal{S}^{\omega,\mu}_{v_\mu} \times  \mathcal{S}^{\omega,k}_{1/v_\lambda} \rightarrow  \mathcal{S}^{\omega,h}_{v_h}, ~  (\varphi, \psi) \mapsto \varphi \cdot \psi  \mbox { is continuous.}
\end{gather*}
By Lemma \ref{M12} and the fact that $\V$ satisfies $[\condSquare]$, this follows from Lemma \ref{main-lemma-3}. Next, we show that $\mathcal{O}_{M}(\mathcal{S}^{[\omega]}_{[\V]})$ is continuously included in $\ZZ^{[\omega]}_{[\V]}$.  By  Lemma \ref{ex-window}, there exists $\chi \in \mathcal{S}^{[\omega]}_{[\V]} \cap \mathcal{S}^{[\omega]}_{[\widetilde \V]} \backslash \{0\}$. Set $\psi = \chi^2 \in \mathcal{S}^{[\omega]}_{[\V]} \cap \mathcal{S}^{[\omega]}_{[\widetilde \V]} \backslash \{0\}$. Proposition \ref{STFT-Z}  and the reconstruction formula \eqref{eq:reconstructSTFTTempUltra} (with $\psi = \gamma$) imply that it suffices to show that the mapping
$$
V_\psi: \mathcal{O}_{M}(\mathcal{S}^{[\omega]}_{[\V]}) \rightarrow C_{ \ZZ^{[\omega]}_{[\V]}}(\R^{2d}_{x,\xi})
$$
is continuous. We start by showing that 
$$
V_\psi f(x,\xi) = \int_{\R^d} ((T_x \overline{\psi}) \cdot f)(t) e^{-2\pi i \xi \cdot t} \dt, \qquad (x, \xi) \in \R^{2d},
$$
for all $f \in \mathcal{O}_{M}(\mathcal{S}^{[\omega]}_{[\V]})$. As $\psi = \chi^2$, we have that
\begin{align*}
&V_\psi f(x,\xi) = \langle f, \overline{M_\xi T_x \psi} \rangle =   \langle f, (\overline{M_\xi T_x \chi}) \cdot (T_x \overline{\chi}) \rangle =    \langle (T_x \overline{\chi} ) \cdot f, (\overline{M_\xi T_x \chi})  \rangle  \\
&= \int_{\R^d} ((T_x \overline{\chi}) \cdot f)(t) T_x\overline{\chi}(t) e^{-2\pi i \xi \cdot t} \dt =
\int_{\R^d} ((T_x \overline{\psi}) \cdot f)(t) e^{-2\pi i \xi \cdot t} \dt 
\end{align*}
for all $(x,\xi) \in \R^{2d}$. In the rest of the proof we treat the Beurling and Roumieu case separately. We first consider the Beurling case. By Lemma \ref{projective}$(i)$, it suffices to show that for all $h > 0$ and $v \in \overline{V}(\mathcal{V})$ there is a bounded subset $B \subset \mathcal{S}^{(\omega)}_{(\V)}$, $\mu > 0$ and $k, C > 0$ such that
\begin{equation}
\|V_\psi f\|_{v \otimes e^{\frac{1}{h}\omega}} \leq C \sup_{\varphi \in B} \| \varphi \cdot f \|_{ \mathcal{S}^{\omega,k}_{v_\mu}}, \qquad f \in  \mathcal{O}_M(\mathcal{S}^{(\omega)}_{(\V)}).
\label{Beurling-todo}
\end{equation}
Set 
$$
B = \{ T_x \psi v(x) ~  | ~  x \in \R^d \}.
$$
We claim that  $B$ is a bounded subset of $\mathcal{S}^{(\omega)}_{(\V)}$. Let $h, \lambda > 0$ be arbitrary. Condition $(\condM)$ yields that there are $\kappa, \nu \leq \lambda$ and $C > 0$ such that $v_\lambda(y) \leq Cv_\kappa(y-x) v_\nu(x)$ for all $x,y \in \R^d$. Hence,
\begin{align*}
\sup_{x \in \R^d} \norm{T_x \psi v(x)}_{\mathcal{S}^{\omega,h}_{v_\lambda} } &= \sup_{x\in \R^d}\sup_{\alpha \in \N^{d}} \sup_{ y \in \R^{d}} |\psi^{(\alpha)}(y-x)|v_\lambda(y)\exp \left(-\frac{1}{h}\phi^*(h|\alpha|)\right) v(x) \\
&\leq C\norm{\psi}_{\mathcal{S}^{\omega,h}_{v_\kappa} } \sup_{x \in \R^d} v(x) v_\nu(x) < \infty.
\end{align*}
Next, choose $\mu >0$ such that $1/v_\mu \in L^1$ (as $v_\lambda \geq 1$ for all $\lambda > 0$ this is possible by $(\condN)$). By Lemma \ref{M12}$(i)$, there are $k,C > 0$ such that
$$
\frac{1}{k} \phi^* ( k(y+1)) + (\log\sqrt{d}) y \leq \frac{1}{h} \phi^* (hy) 
+ \log C, \qquad y \geq 0.
$$
For all $y \geq 0$ and $(x, \xi) \in \R^{2d}$ with $|\xi| \geq 1$ it holds that
\begin{align*}
&|\xi|^{y} |V_\psi f(x,\xi)| v(x) \\
&\leq \max_{|\alpha| = \lceil y \rceil} \sqrt{d}^{|\alpha|} v(x)\left | \xi^\alpha  \int_{\R^d} ((T_x \overline{\psi}) \cdot f)(t) e^{-2\pi i \xi \cdot t} \dt \right|  \\
& \leq \max_{|\alpha| = \lceil y \rceil} (\sqrt{d}/(2\pi))^{|\alpha|}  v(x) \int_{\R^d} |((T_x \overline{\psi}) \cdot f)^{(\alpha)}(t)| \dt  \\ 
& \leq \| 1/v_\mu \|_{L^1} \sup_{\varphi \in B} \| \varphi \cdot f \|_{ \mathcal{S}^{\omega,k}_{v_\mu}} \max_{|\alpha| = \lceil y \rceil} (\sqrt{d}/(2\pi))^{|\alpha|} \exp \left( \frac{1}{k} \phi^*(k|\alpha|) \right) \\
& \leq \sqrt{d} \| 1/v_\mu \|_{L^1} \sup_{\varphi \in B} \| \varphi \cdot f \|_{ \mathcal{S}^{\omega,k}_{v_\mu}}  \exp \left( \frac{1}{k} \phi^*(k(y+1)) + (\log\sqrt{d}) y \right) \\
& \leq \sqrt{d}C \| 1/v_\mu \|_{L^1} \sup_{\varphi \in B} \| \varphi \cdot f \|_{ \mathcal{S}^{\omega,k}_{v_\mu}}  \exp \left( \frac{1}{h} \phi^*(hy) \right).
\end{align*}
Hence,
\begin{align*}
 |V_\psi f(x,\xi)| v(x)  &\leq  \sqrt{d}C \| 1/v_\mu \|_{L^1} \sup_{\varphi \in B} \| \varphi \cdot f \|_{ \mathcal{S}^{\omega,k}_{v_\mu}} \inf_{y \geq 0}  \exp \left(\frac{1}{h} \phi^* (hy) - (\log |\xi|) y  \right) \\
&= \sqrt{d}C \| 1/v_\mu \|_{L^1} \sup_{\varphi \in B} \| \varphi \cdot f \|_{ \mathcal{S}^{\omega,k}_{v_\mu}} e^{-\frac{1}{h} \omega(\xi)}.
\end{align*}
For all $(x, \xi) \in \R^{2d}$ with $|\xi| \leq 1$ it holds that
\begin{align*}
 |V_\psi f(x,\xi)| v(x)e^{\frac{1}{h} \omega(\xi)} &\leq e^{\frac{1}{h} \omega(1)} v(x)  \int_{\R^d} |((T_x \overline{\psi}) \cdot f)(t)| \dt  \\
& \leq e^{\frac{1}{h}\omega(1)}   \| 1/v_\mu \|_{L^1} \| \sup_{\varphi \in B} \| \varphi \cdot f \|_{ \mathcal{S}^{\omega,k}_{v_\mu}}.
\end{align*}
This shows \eqref{Beurling-todo}. Next, we consider the Roumieu case.  By Lemma \ref{projective}$(ii)$, it suffices to show that for all $\lambda > 0$ and $\sigma \in \overline{V}_\omega$ there is a bounded subset $B \subset \mathcal{S}^{\{\omega\}}_{\{\V\}}$, a continuous seminorm $p$ on $\mathcal{S}^{\{\omega\}}_{\{\V\}}$ and  $C > 0$ such that
\begin{equation}
\|V_\psi f\|_{\frac{1}{v_\lambda} \otimes e^{\sigma}} \leq C \sup_{\varphi \in B} p(\varphi \cdot f), \qquad f \in \mathcal{O}_M(\mathcal{S}^{\{\omega\}}_{\{\V\}}).
\label{Roumieu-todo}
\end{equation}
Set 
$$
B = \left\{ \frac{T_x \psi}{v_\lambda(x)} ~  | ~  x \in \R^d  \right\}.
$$
We claim that $B$ is a bounded subset of $\mathcal{S}^{\{\omega\}}_{\{\V\}}$. Let $h, \mu >0$ be such that $\psi \in \mathcal{S}^{\omega,h}_{v_\mu}$. Condition $\{\condM\}$ yields that there are $ \nu \geq \lambda,\mu$ and $C > 0$ such that $v_\nu(y) \leq Cv_\mu(y-x) v_\lambda(x)$ for all $x,y \in \R^d$. Hence,
\begin{align*}
\sup_{x \in \R^d} \norm{\frac{T_x \psi}{v_\lambda(x)}}_{\mathcal{S}^{\omega,h}_{v_\nu} } &= \sup_{x\in \R^d}\sup_{\alpha \in \N^{d}} \sup_{ y \in \R^{d}} |\psi^{(\alpha)}(y-x)|\frac{v_\nu(y)}{v_\lambda(x)}\exp \left(-\frac{1}{h}\phi^*(h|\alpha|)\right) \\
&\leq C\norm{\psi}_{\mathcal{S}^{\omega,h}_{v_\mu} } < \infty.
\end{align*}
This shows that $B$ is a bounded subset of $\mathcal{S}^{\omega,h}_{v_\nu}$ and thus also of $\mathcal{S}^{\{\omega\}}_{\{\V\}}$. Next, by Lemma \ref{M12}$(i)$, there are $k,C > 0$ such that
$$
\frac{1}{k} \phi^*_\sigma (k(y+1)) + (\log\sqrt{d}) y \leq \phi^*_\sigma (y) 
+ \log C, \qquad y \geq 0.
$$
where $\phi^*_\sigma$ denotes the Young conjugate of the function $\phi_\sigma(x) = \sigma(e^x)$. Define
$$
p(\varphi) = \sup_{\alpha \in \N^d} \|\varphi^{(\alpha)}\|_{L^1} \exp \left(-\frac{1}{k}\phi^*_\sigma(k|\alpha|)\right) , \qquad \varphi \in \mathcal{S}^{\{\omega\}}_{\{\V\}}.
$$
Then, $p$ is a continuous seminorm on $\mathcal{S}^{\{\omega\}}_{\{\V\}}$. One can now show \eqref{Roumieu-todo} in the same way as \eqref{Beurling-todo} was shown in the Beurling case.
\end{proof}


\begin{thebibliography}{999}
\setlength{\itemsep}{0pt}

\bibitem{A-B-B-ProjLimWeighLBSpContFunc}
{\sc S.~Agethen, K.~D.~Bierstedt, J.~Bonet}, {\em Projective limits of weighted (LB)-spaces of continuous functions}, 
Arch. Math. 92 (2009), 384--398.

\bibitem{B-O-CharSConvMulSpSTFT}
{\sc C.~Bargetz, N.~Ortner}, {\em Characterization of L. Schwartz' convolutor and multiplier spaces $\mathcal{O}^{\prime}_{C}$ and $\mathcal{O}_{M}$ by the short-time Fourier transform}, 
Rev. R. Acad.  Cienc. Exactas Fís. Nat. Ser. A. Math. RACSAM 108 (2014), 833--847.

\bibitem{B-M-S} K.~D.~Bierstedt, R.~Meise, W.~H.~Summers, \emph{A projective description of
weighted inductive limits}, Trans. Amer. Math. Soc. \textbf{272} (1982), 107--160.

\bibitem{B-J-GaborUnimodWindDecay}
{\sc H.~B\"{o}lcskei, A. J. E. M.~Janssen}, {\em Gabor frames, unimodularity, and window decay}, 
J. Fourier Anal. Appl.  6(3) (2000), 255--276.

\bibitem{B-M-M-CompDifferentClassUltraDiffFunc}
{\sc J.~Bonet, R.~Meise, S.~N.~Melikhov}, {\em A comparison of two different ways to define classes of ultradifferentiable functions}, 
Bull. Belg. Math. Soc. Simon Stevin 14 (2007), 425--444.

\bibitem{B-M-T-UltraDiffFuncFAnal} 
{\sc R.~W.~Braun, R.~Meise, B.~A.~Taylor}, {\em Ultradifferentiable functions and Fourier analysis}, 
Results Math. 17 (1990), 206--237.

\bibitem{C-PairsDualGaborFrameGenCompSupp}
{\sc O.~Christensen}, {\em Pairs of dual Gabor frame generators with compact support and desired frequency localization},
Appl. Comput. Harmon. Anal., 20 (2006), 403--410.


\bibitem{D-N-V-UltraVanishInf}
{\sc A.~Debrouwere, L.~Neyt, J.~Vindas}, {\em On the space of ultradistributions vanishing at infinity},
Banach J. Math. Anal. 14 (2020), 915--934.

\bibitem{D-N-V-NuclGSKerThm}
{\sc A.~Debrouwere, L.~Neyt, J.~Vindas}, {\em The nuclearity of Gelfand-Shilov spaces and kernel theorems}, 
Collect. Math. 72 (2021), 203--227. 

\bibitem{D-V2018}  A.~Debrouwere, J.~Vindas, \emph{On the non-triviality of certain spaces of analytic functions. Hyperfunctions and ultrahyperfunctions of fast growth,} Rev. R. Acad.  Cienc. Exactas Fís. Nat. Ser. A. Math. RACSAM 112 (2018), 473--508.

\bibitem{D-V-WeightedIndUltra}
{\sc A.~Debrouwere, J.~Vindas}, {\em On weighted inductive limits of spaces of ultradifferentiable functions and their duals}, 
Math. Nachr. 292 (2019), 573--602.

\bibitem{D-V-TopPropConvSTFT}
{\sc A.~Debrouwere, J.~Vindas}, {\em Topological properties of convolutor spaces via the short-time Fourier transform},
Trans. Amer. Math. Soc. 374 (2021), 829--861.


\bibitem{D-P-V-MultConvTempUltraDist}
{\sc P.~Dimovski, B.~Prangoski, D.~Velinov}, {\em Multipliers and convolutors in the space of tempered ultradistributions},
Novi Sad J. Math. 44 (2014), 1--18.

\bibitem{D-ClasPLSSpDist}
{\sc P.~Doma\'{n}ski}, {\em Classical $(PLS)$-spaces: spaces of distributions, real analytic functions and their relatives}, 
pp. 51--70, in: {\em Orlicz Centenary Volume}, Banach Center Publications, Warszawa, 2004.



\bibitem{G-S-GenFunc2}
{\sc I.~M.~Gel'fand, G.~E.~Shilov}, {\em Generalized functions. Vol. 2: Spaces of fundamental and generalized functions}, 
Academic Press, New York-London, 1968.

\bibitem{G-FoundationsTimeFreqAnal}
{\sc K. Gr\"{o}chenig} , {\em Foundations of time-frequency analysis}, 
Birkh\"{a}user Boston, Inc., Boston, MA, 2001.


\bibitem{G-ProdTensTopEspNucl} 
{\sc A.~Grothendieck}, {\em Produits tensoriels topologiques et espaces nucl\'{e}aires}, 
Mem. Amer. Math. Soc. 16, 1955.

\bibitem{Heinrich} T.~Heinrich, R.~Meise, \emph{A support theorem for quasianalytic functionals}, Math. Nachr. 280 (2007), 364--387.

\bibitem{Janssen}
{\sc A.~J.~E.~M.~Janssen}, {\em Signal analytic proofs of two basic results on lattice expansions}, 
Appl. Comp. Harmonic Anal. 1 (1994), 350--354.




\bibitem{Kawai} T.~Kawai, \emph{On the theory of Fourier hyperfunctions and its applications to partial
differential equations with constant coefficients}, J. Fac. Sci. Univ. Tokyo, Sec. IA 17 (1970), 467--517.

\bibitem{KWL} D.~Kim, K.~W.~Kim, E.~L.~Lee, \emph{Convolution and multiplication operators in Fourier hyperfunctions}, Integral Transforms Spec. Funct. 17 (2006), 53--63.

\bibitem{K-Ultradistributions1}
{\sc H.~Komatsu}, {\em Ultradistributions I. Structure theorems and a characterization}, 
J. Fac. Sci. Univ. Tokyo Sect. IA Math. 20 (1973), 25--105.


\bibitem{L-W} J.~Larcher, J.~Wengenroth, \emph{A new proof for the bornologicity of the space of slowly increasing
functions}, Bull. Belg. Math. Soc. Simon Stevin 5 (2014), 887--894.

\bibitem{Morimoto} M.~ Morimoto,  \emph{Convolutors for   ultrahyperfunctions},  Internat.  Sympos.  Math.  Problems  in  Theoret. Phys. (Kyoto,  1975),  Lecture Notes in  Phys., vol.  39, Springer-Verlag, 1975, pp.  49--54.





\bibitem{Schwartz} L.~Schwartz, \emph{Th\'eorie des distributions}, Hermann, Paris, 1966.



\bibitem{S-InclThMoyalMultAlgGenGSSp}
{\sc M.~A.~Soloviev}, {\em Inclusion theorems for the Moyal multiplier algebras of generalized Gelfand-Shilov spaces}, Integr. Equ. Oper. Theory 93 (2021), 52. (DOI: 10.1007/s00020-021-02664-2).

\bibitem{V-RepOM}
{\sc M.~Valdivia}, {\em A representation of the space $\mathcal{O}_{M}$},
Math. Z. 77 (1981), 463--478.

\bibitem{KPSV} S.~Kostadinova, S.~Pilipovi\'c, K.~Saneva and J.~Vindas, \emph{The short-time Fourier transform of distributions of exponential type and Tauberian theorems for S-asymptotics}, Filomat 30 (2016), 3047--3061.

\bibitem{V-FuncFSp}
{\sc D.~Vogt}, {\em On the functors $\operatorname{Ext}^{1}(E, F)$ for Fr\'{e}chet spaces}, 
Studia Math. 85 (1987), 163--197.






\bibitem{Wengenroth} J.~Wengenroth, \emph{Derived functors in functional analysis}, Springer-Verlag, Berlin, 2003.

 \bibitem{Zharinov} V.~V.~Zharinov, \emph{Fourier ultrahyperfunctions}, (Russian) Izv. Akad. Nauk SSSR Ser. Mat. 44 (1980), 533--570; English translation: Math. USSR-Izv.16 (1981), 479--511.


\end{thebibliography}
\end{document}